\documentclass[twoside,11pt]{article}

%

%
%
%

\usepackage{amssymb,latexsym,amsmath}
\usepackage{epsfig}
\usepackage{caption}

\usepackage{jmlr2e}
 
 \usepackage{amsmath}
\usepackage{amsfonts}
\usepackage{array}
\usepackage{dsfont}
\usepackage{hyperref}
\usepackage{amssymb}

\usepackage{makeidx}
\usepackage{graphicx}
\graphicspath{ {./images/} }

\allowdisplaybreaks

 \usepackage{graphicx,fancybox,latexsym,epsfig}
\usepackage{fancyhdr,amsmath,times,amsxtra,amssymb}
\usepackage{color}

\newtheorem{assumption}{Assumption}

\def\Pr{\mathop{\rm Pr}}

\def\argmin{\mathop{\rm arg\, min}}

\def\B{{\mathcal B}}

\def\P{{\mathcal P}}

\def\sPr{{\mathsf{Pr}}}

\def\sX{{\mathds X}}

\def\sU{{\mathds U}}

\def\sU{{\mathds U}}

\newcommand{\R}{\mathds{R}}
\newcommand{\Zplus}{\mathbb{Z}_+}
\newcommand{\N}{\mathbb{N}}

\newcommand{\dd}{\mathrm{d}}

\allowdisplaybreaks




\ShortHeadings{Near Optimality of Finite Memory Feedback Policies in POMPDs}{Kara and Y\"uksel}
\firstpageno{1}

\begin{document}

\title{Near Optimality of Finite Memory Feedback Policies in Partially Observed Markov Decision Processes
}

\author{\name Ali Devran Kara \email alikara@umich.edu \\
       \addr Department of Mathematics\\
       University of Michigan\\
       Ann Arbor, MI 48109-1043, USA
       \AND
       \name  Serdar Y\"uksel \email yuksel@queensu.ca \\
       \addr  Department of Mathematics and Statistics\\
       Queen's University\\
      Kingston, ON, Canada,}

\editor{-}

\maketitle

\begin{abstract}%
In the theory of Partially Observed Markov Decision Processes (POMDPs), existence of optimal policies have in general been established via converting the original partially observed stochastic control problem to a fully observed one on the belief space, leading to a belief-MDP. However, computing an optimal policy for this fully observed model, and so for the original POMDP, using classical dynamic or linear programming methods is challenging even if the original system has finite state and action spaces, since the state space of the fully observed belief-MDP model is always uncountable. Furthermore, there exist very few rigorous value function approximation and optimal policy approximation results, as regularity conditions needed often require a tedious study involving the spaces of probability measures leading to properties such as Feller continuity. In this paper, we study a planning problem for POMDPs where the system dynamics and measurement channel model are assumed to be known. We construct an approximate belief model by discretizing the belief space using only finite window information variables. We then find optimal policies for the approximate model and we rigorously establish near optimality of the constructed finite window control policies in POMDPs under mild non-linear filter stability conditions and the assumption that the measurement and action sets are finite (and the state space is real vector valued). We also establish a rate of convergence result which relates the finite window memory size and the approximation error bound, where the rate of convergence is exponential under explicit and testable exponential filter stability conditions. While there exist many experimental results and few rigorous asymptotic convergence results, an explicit rate of convergence result is new in the literature, to our knowledge.
 \end{abstract}

 \begin{keywords}
POMDPs, nonlinear filters, stochastic control
\end{keywords}

\section{Introduction}
For Partially Observed Stochastic Control, also known as Partially Observed Markov Decision Problems (POMDPs), existence of optimal policies have in general been established via converting the original partially observed stochastic control problem to a fully observed Markov Decision Problem (MDP) one on the belief space, leading to a belief-MDP. However, computing an optimal policy for this fully observed model, and so for the original POMDP, using classical methods (such as dynamic programming, policy iteration, linear programming) is challenging even if the original system has finite state and action spaces, since the state space of the fully observed model is always uncountable.

In the MDP theory, various methods have been developed to compute approximately optimal policies by reducing the original problem into a simpler one. A partial list of these techniques is as follows: approximate dynamic programming, approximate value or policy iteration, simulation-based techniques, neuro-dynamic programming (or reinforcement learning), state aggregation, etc. \citep{DuPr12,Ber75,chow1991optimal}. \citet{SaYuLi15c} investigated finite action and state approximations of fully observed stochastic control problems with general state and action spaces under the discounted cost and average cost optimality criteria where weak continuity conditions were shown to be sufficient for near optimality of finite state approximations  in the sense that optimal policies obtained from these models asymptotically achieve the optimal cost for the original problem under the weak continuity assumption on the controlled transition kernel. 




On POMDPs, however, the problem of approximation is significantly more challenging. Most of the studies in the literature are algorithmic and computational contributions  \citep{porta2006point,ZhHa01}. These studies develop computational algorithms, utilizing structural convexity/concavity properties of the value function under the discounted cost criterion. \citet{spaan2005perseus} provide an insightful algorithm which may be regarded as a quantization of the belief space; however, no rigorous convergence results are provided. \citet{smith2012point,pineau2006anytime} also present quantization based algorithms for the belief state, where the state, measurement, and the action sets are finite. \citet{zhang2014covering} also provides a computationally efficient approximation scheme by quantizing the belief space uniformly under $L_1$ distance.


For partially observed setups, \citet{SYLTAC2017POMDP,SaYuLi15c} introduce a rigorous approximation analysis after establishing weak continuity conditions on the transition kernel defining the (belief-MDP) via the non-linear filter \citep{FeKaZa12, KSYWeakFellerSysCont}, and show that finite model approximations obtained through quantization are asymptotically optimal and the control policies obtained from the finite model can be applied to the actual system with asymptotically vanishing error as the number of quantization bins increases. Another rigorous set of studies is by \citet{zhou2008density,zhou2010solving} where the authors provide an explicit quantization method for the set of probability measures containing the belief states, where the state space is parametrically representable under strong density regularity conditions. The quantization is done through the approximations as measured by Kullback-Leibler divergence (relative entropy) between probability density functions. Further recent studies include \citet{mao2020,Mahajan2019}. \citet{Mahajan2019} present a notion of approximate information variable and studies near optimality of policies that satisfies the approximate information state property. \citet{mao2020} analyzes a similar problem under a decentralized setup. Our explicit approximation results in this paper will find applications in both of these studies. 

We refer the reader to the survey papers by \citet{Lov91-(b),Whi91,hansen2013solving} and the recent book by \citet{Kri16} for further structural results as well as algorithmic and computational methods for approximating POMDPs. Notably, for POMDPs \citet{Kri16} presents structural results on optimal policies under monotonicity conditions of the value function in the belief variable.

For our work, we specifically focus on finite memory approximations. With regard to approximations based on finite memory, the following two papers are particularly relevant to our paper:

\citet{yu2008near} study near optimality of finite window policies for average cost problems where the state, action and observation spaces are finite; under the condition that the liminf and limsup of the average cost are equal and independent of the initial state, the paper establishes the near-optimality of (non-stationary) finite memory policies. Here, a concavity argument building on  a work of \citet{Feinberg2} (which becomes consequential by the equality assumption) and the finiteness of the state space is crucial. The paper shows that for any given $\epsilon>0$, there exists an $\epsilon$-optimal finite window policy. However, the authors do not provide a performance bound related to the length of the window, and in fact the proof method builds on convex analysis. Nonetheless, the constant property of the value functions over initial priors is related to unique ergodicity, and thus the stability problem of non-linear filters, which is a topic of current investigation particularly in the controlled setup. 

In another related direction, \citet{white1994finite} study finite memory approximation techniques for POMDPs with finite state, action and measurements. The POMDP is reduced to a belief MDP and the worst and best case predictors prior to the $N$ most recent information variables are considered to build an approximate belief MDP. The original value function is bounded using these approximate belief MDPs that use only finite memory, where the finiteness of the state space is critically used. Furthermore, a loss bound is provided for a suboptimally constructed policy that only uses finite history, where the bound depends on a more specific ergodicity coefficient (which requires restrictive, sample pathwise, contraction properties). In our paper, we consider more general signal spaces and more relaxed filter stability conditions, and establish explicit rates of convergence results. We also rigorously establish the relation of the loss bound to nonlinear filter stability and state space reduction techniques for MDPs.   

A recent work by the authors \citet{kara2021convergence} introduces a different finite history approximation technique where the approximation is done via an alternative belief MDP-reduction method rather than direct discretization of the space of probability measures. It is shown that the approximation error can be related to the controlled filter stability in terms of the total variation distance, whereas in this paper, the error bound is also shown to be related to more general, and in particular weak convergence inducing, metrics. Although, the approximation technique introduced in \citet{kara2021convergence} provides an error upper bound in terms of the more stringent total variation distance, it proves to be numerically efficient as it is shown that a finite history Q learning algorithm converges to the optimality equation of an approximate model. Our analysis here requires less stringent conditions on filter stability, however the use of the bounded-Lipschitz metric on probability measures leads to a significantly more tedious analysis.

{\bf Contributions.} In this paper, we rigorously establish near optimality of finite memory feedback control policies for the case where the actions and measurements are finite (with the state being real vector valued), provided that the controlled non-linear filter is stable in a sense to be presented in the paper. We also explicitly relate the approximation error with the window size. This is the first rigorous result, to our knowledge, where finite window policies are shown to be $\epsilon$-optimal with an explicit rate of convergence with respect to the window size. 


\subsection{Preliminaries and the Main Results}\label{prb}
Let $\mathds{X} \subset \mathds{R}^m$ denote a Borel set which is the state space of a partially observed controlled Markov process. Here and throughout the paper $\Zplus$ denotes the set of non-negative
integers and $\mathds{N}$ denotes the set of positive integers. Let
$\mathds{Y}$ be a finite set denoting the observation space of the model, and let the state be observed through an
observation channel $Q$. The observation channel, $Q$, is defined as a stochastic kernel (regular
conditional probability) from  $\mathds{X}$ to $\mathds{Y}$, such that
$Q(\,\cdot\,|x)$ is a probability measure on the power set $P(\mathds{Y})$ of $\mathds{Y}$ for every $x
\in \mathds{X}$, and $Q(A|\,\cdot\,): \mathds{X}\to [0,1]$ is a Borel
measurable function for every $A \in P(\mathds{Y})$.  A
decision maker (DM) is located at the output of the channel $Q$, and hence it only sees the observations $\{Y_t,\, t\in \Zplus\}$ and chooses its actions from $\mathds{U}$, the action space which is a finite
subset of some Euclidean space. An {\em admissible policy} $\gamma$ is a
sequence of control functions $\{\gamma_t,\, t\in \Zplus\}$ such
that $\gamma_t$ is measurable with respect to the $\sigma$-algebra
generated by the information variables
$
I_t=\{Y_{[0,t]},U_{[0,t-1]}\}, \quad t \in \mathds{N}, \quad
  \quad I_0=\{Y_0\},
$
where
\begin{equation}
\label{eq_control}
U_t=\gamma_t(I_t),\quad t\in \Zplus,
\end{equation}
are the $\mathds{U}$-valued control
actions and 
$Y_{[0,t]} = \{Y_s,\, 0 \leq s \leq t \}, \quad U_{[0,t-1]} =
  \{U_s, \, 0 \leq s \leq t-1 \}.$

\noindent We define $\Gamma$ to be the set of all such admissible policies. The update rules of the system are determined by (\ref{eq_control}) and the following
relationships:
\[  \Pr\bigl( (X_0,Y_0)\in B \bigr) =  \int_B \mu(dx_0)Q(dy_0|x_0), \quad B\in \mathcal{B}(\mathds{X}\times\mathds{Y}), \]
where $\mu$ is the (prior) distribution of the initial state $X_0$, and
\begin{eqnarray*}
\label{eq_evol}
 \Pr\biggl( (X_t,Y_t)\in B \, \bigg|\, (X,Y,U)_{[0,t-1]}=(x,y,u)_{[0,t-1]} \biggr)
 = \int_B \mathcal{T}(dx_t|x_{t-1}, u_{t-1})Q(dy_t|x_t),  
\end{eqnarray*}
$B\in \mathcal{B}(\mathds{X}\times\mathds{Y}), t\in \mathds{N},$ where $\mathcal{T}$ is the transition kernel of the model which is a stochastic kernel from $\mathds{X}\times
\mathds{U}$ to $\mathds{X}$. Note that, although $\mathds{Y}$ is finite, we use integral sign instead of the summation sign for notation convenience by letting the measure to be sum of dirac-delta measures.  We let the objective of the agent (decision maker) be the minimization of the infinite horizon discounted cost, 
  \begin{align*}
    J_{\beta}(\mu,{\cal T},\gamma)= E_\mu^{{\cal T},\gamma}\left[\sum_{t=0}^{\infty} \beta^t c(X_t,U_t)\right]
  \end{align*}
 \noindent for some discount factor $\beta \in (0,1)$, over the set of admissible policies $\gamma\in\Gamma$, where $c:\mathds{X}\times\mathds{U}\to\R$ is a Borel-measurable stage-wise cost function and $E_\mu^{{\cal T},\gamma}$ denotes the expectation with initial state probability measure $\mu$ and transition kernel ${\cal T}$ under policy $\gamma$. Note that $\mu\in\mathcal{P}(\mathds{X})$, where we let $\mathcal{P}(\mathds{X})$ denote the set of probability measures on $\mathds{X}$. We define the optimal cost for the discounted infinite horizon setup as a function of the priors and the transition kernels as
\begin{align*}
  J_{\beta}^*(\mu,{\cal T})&=\inf_{\gamma\in\Gamma} J_{\beta}(\mu,{\cal T},\gamma).
\end{align*}
For partially observed stochastic problems, the optimal policies use all the available information in general. The question we ask is the following one: suppose we define an {\em  $N$-memory admissible policy} $\gamma^N$ so that $\gamma^N$ is a
sequence of control functions $\{\gamma_t,\, t\in \Zplus\}$ such
that $\gamma_t$ is measurable with respect to the $\sigma$-algebra
generated by the information variables
\begin{align}\label{new_info}
I_t^N&=\{Y_{[t-N,t]},U_{[t-N,t-1]}\}, \text{ if } t\geq N,\nonumber\\ 
I_t^N&=\{Y_{[0,t]},U_{[0,t-1]}\}, \text{ if } 0< t<N,\nonumber\\
  I_0&=\{Y_0\},
\end{align}
that is the controller can only have access to the information variables through a window whose length is $N$. 
We define $\Gamma^N$ to be the set of all such $N$-memory admissible policies. Similarly, we define the optimal cost function under $N$-memory admissible policies as
\begin{align*}
&J_\beta^N(\mu,\mathcal{T})=\inf_{\gamma^N\in\Gamma^N}J_\beta(\mu,\mathcal{T},\gamma^N).
\end{align*}

Under this setup, we will study the following problem.
\begin{itemize}
\item [{\it Problem:}] Under suitable conditions, can we find explicit bounds on $J_\beta^N(\mu,\mathcal{T})-J_\beta^*(\mu,\mathcal{T})$ in terms of $N$ and a constructive approximate solution achieving this bound?
\end{itemize}

Our goal is to find the best possible control policy in $\Gamma^N$ that is, in the set of policies that  use only a finite history of information variables, in an offline setting by reducing the problem to a simpler approximate setup where we assume that the system dynamics are known to the designer. A general summary of the approach we will follow to answer this problem is as follows: We first define the belief MDP counterpart of the partially observed system. Then, we construct a finite subset of the belief state space using the probability distributions that can be achieved using the finite window information variables ($I_t^N$'s) from a fixed probability distribution. This finite subset leads to an approximate MDP model for which we find optimal policies. The calculation of the policies is greatly simplified compared to the calculation of optimal policies for the original POMDP model. Finally, we show that the loss occurring from applying this approximate policy to the original model can be upper bounded by the expected error of the dicretization of the belief space. The loss is evaluated compared to the best possible \textit{admissible} policy in the set $\Gamma$. The accumulating error then, can be represented in relation to the filter stability problem, that is, how fast the controlled process forgets its initial distribution as it observes the information variables from the system.

Note that although we take the infimum over all $N$-memory admissible policies, we will explicitly construct finite window policies which will be {\it time-invariant}, or a {\it finite-state probabilistic automaton} \citep[as is also referred to by][]{yu2008near} that accepts as inputs the finite window of observations and actions, and produces as outputs the control actions in a time-invariant/stationary fashion. Accordingly, the infimum above for $J_\beta^N(\mu,\mathcal{T})$ can be replaced with the minimum over such policies.

We answer the problem above affirmatively in Theorems \ref{mainThmNIPS}, \ref{mainThmNIPS2}, and \ref{unif_cont} under complementary conditions. 

\section{Regularity and Stability Properties of the Belief-MDP}
In this section, we introduce the belief MDP reduction of POMDPs and provide regularity properties of the belief MDPs.

\subsection{Convergence Notions for Probability Measures}
For the analysis of the technical results, we will use different notions of convergence for sequences of probability measures.

Two important notions of convergences for sequences of probability measures are weak convergence, and convergence under total variation. For some $N\in\N$ a sequence $\{\mu_n,n\in\N\}$ in $\mathcal{P}(\R^N)$ is said to converge to $\mu\in\mathcal{P}(\R^N)$ \emph{weakly} if $\int_{\R^N}c(x)\mu_n(dx) \to \int_{\R^N}c(x)\mu(dx)$ for every continuous and bounded $c:\R^N \to \R$.
One important property of weak convergence is that the space of probability measures on a complete, separable, and metric (Polish) space endowed with the topology of weak convergence is itself complete, separable, and metric \citep{Par67}. One such metric is the bounded Lipschitz metric  \cite[p.109]{villani2008optimal}, which is defined for $\mu,\nu \in \P(\mathds{X})$ as 
\begin{equation}\label{BLmetric}
\rho_{BL}(\mu,\nu):=\sup_{\|f\|_{BL}\leq1} | \int f d\mu - \int f d\nu | 
\end{equation}
where \[ \|f\|_{BL}:=\|f\|_\infty+\sup_{x\neq y}\frac{|f(x)-f(y)|}{d(x,y)} \]
and $\|f\|_\infty=\sup_{x\in\mathds{X}}|f(x)|$.

  For probability measures $\mu,\nu \in \mathcal{P}(\R^N)$, the \emph{total variation} metric is given by
  \begin{align*}
    \|\mu-\nu\|_{TV}&=2\sup_{B\in\mathcal{B}(\R^N)}|\mu(B)-\nu(B)|=\sup_{f:\|f\|_\infty \leq 1}\left|\int f(x)\mu(\dd x)-\int f(x)\nu(\dd x)\right|,
  \end{align*}
  \noindent where the supremum is taken over all measurable real $f$ such that $\|f\|_\infty=\sup_{x\in\R^N}|f(x)|\leq 1$. A sequence $\mu_n$ is said to converge in total variation to $\mu \in \mathcal{P}(\R^N)$ if $\|\mu_n-\mu\|_{TV}\to 0$.

\subsection{Ergodicity and Filter Stability Properties of Partially Observed MDPs}


Given a prior $\mu \in \P(\mathds{X})$ and a policy $\gamma \in \Gamma$, we define the filter and
predictor for a POMDP in the following.
\begin{definition}
The one step predictor process is defined as the sequence of conditional probability measures 
\begin{align*}
\pi_{n-}^{\mu,\gamma}(\cdot)&=P^{\mu,\gamma}(X_n\in \cdot|Y_{[0,n-1]},U_{[0,n-1]}=\gamma_n(Y_{[0,n-1]},U_{[0,n-2]}))=P^{\mu,\gamma}(X_n\in \cdot|Y_{[0,n-1]}),\quad n\in \mathbb{N} 
\end{align*}
where $P^{\mu,\gamma}$ is the probability measure induced by the prior $\mu$ and the policy $\gamma$, when $\mu$ is the probability measure on $X_0$.
\end{definition}

\begin{definition}
The filter process is defined as the sequence of conditional probability measures 
\begin{align}\label{filter}
\pi_{n}^{\mu,\gamma}(\cdot)&=P^{\mu,\gamma}(X_n\in \cdot|Y_{[0,n]},U_{[0,n-1]}=\gamma_n(Y_{[0,n-1]},U_{[0,n-2]}))=P^{\mu,\gamma}(X_n\in \cdot|Y_{[0,n]}),\quad  n \in \mathbb{N} 
\end{align}
where $P^{\mu,\gamma}$ is the probability measure induced by the prior $\mu$ and the policy $\gamma$.
\end{definition}



%

\begin{definition}\cite[Equation 1.16]{dobrushin1956central}\label{dob_def}
For a kernel operator $K:S_{1} \to \mathcal{P}(S_{2})$ (that is a regular conditional probability from $S_1$ to $S_2$) for standard Borel spaces $S_1, S_2$, we define the Dobrushin coefficient as:
\begin{align}
\delta(K)&=\inf\sum_{i=1}^{n}\min(K(x,A_{i}),K(y,A_{i}))\label{Dob_def}
\end{align}
where the infimum is over all $x,y \in S_{1}$ and all partitions $\{A_{i}\}_{i=1}^{n}$ of $S_{2}$.
\end{definition}
We note that this definition holds for continuous or finite/countable spaces $S_{1}$ and $S_{2}$ and $0\leq \delta(K)\leq 1$ for any kernel operator. 
\begin{example}\label{doub_exmp}
Assume for a finite setup, we have the following stochastic transition matrix
\begin{equation*}
K = 
\begin{pmatrix}
\frac{1}{3} & \frac{1}{3}  & \frac{1}{3} \\
0 & \frac{1}{2} & \frac{1}{2} \\
\frac{3}{4} & 0 & \frac{1}{4}
\end{pmatrix}
\end{equation*}
The Dobrushin coefficient is the minimum over any two rows where we sum the minimum elements among those rows. For this example, the first and the second rows give $\frac{2}{3}$, the first and the third rows give $\frac{7}{12}$ and the second and the third rows give $\frac{1}{4}$. Then the  Dobrushin coefficient is $\frac{1}{4}$.
\end{example}

Let
\[ \tilde{\delta}({\cal T}):=\inf_{u \in \mathds{U}} \delta({\cal T}(\cdot|\cdot,u)). \]
%

\begin{definition}
For $\mathds{X}\subset \mathds{R}^m$ for some $m\in\mathds{N}$, and for two probability measures $\mu,\nu \in \P(\mathds{X})$, $\mu$ is said to be absolutely continuous with respect to $\nu$ if $\mu(A) = 0$ for every set $A\in\B(\mathds{X})$ for which $\nu(A) = 0$. We denote the absolute continuity of $\mu$ with respect to $\nu$ by $\mu\ll\nu$.
\end{definition}

\begin{theorem} \cite[Theorem 3.3]{mcdonald2020exponential} \label{curtis_result}
Assume that for $\mu,\nu \in \P(\mathds{X})$, we have $\mu\ll\nu$. Then we have (exponential filter stability)
\begin{align*}
E^{\mu,\gamma}\left[\|\pi_{n+1}^{\mu,\gamma}-\pi_{n+1}^{\nu,\gamma}\|_{TV}\right]\leq (1-\tilde{\delta}(\mathcal{T}))(2-\delta(Q))E^{\mu,\gamma}\left[\|\pi_{n}^{\mu,\gamma}-\pi_{n}^{\nu,\gamma}\|_{TV}\right].
\end{align*}
In particular, defining $\alpha:=(1-\tilde{\delta}(\mathcal{T}))(2-\delta(Q))$, we have
\begin{align*}
E^{\mu,\gamma}\left[\|\pi_{n}^{\mu,\gamma}-\pi_{n}^{\nu,\gamma}\|_{TV}\right]\leq 2\alpha^n.
\end{align*}
\end{theorem}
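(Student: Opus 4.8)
The plan is to exploit the two-step structure of the nonlinear filter recursion: passing from $\pi_n$ to $\pi_{n+1}$ consists of a \emph{prediction} step, in which the current filter is pushed forward through the transition kernel $\mathcal{T}(\cdot|\cdot,U_n)$ to produce the predictor $\pi_{(n+1)-}^{\cdot,\gamma}$, followed by a \emph{Bayesian measurement-update} step, in which the predictor is reweighted by the likelihood $Q(Y_{n+1}|\cdot)$ and renormalized. Since both filters $\pi^{\mu,\gamma}$ and $\pi^{\nu,\gamma}$ are driven by the \emph{same} observation and action sequence (generated by the true system under $\mu$ and $\gamma$), I would bound the contraction contributed by each step separately and then chain them. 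The prediction step contracts the total variation distance by the factor $(1-\tilde\delta(\mathcal{T}))$, while the measurement step may expand it, but only by the factor $(2-\delta(Q))$ \emph{in conditional expectation over the next observation}; the product of these two factors is exactly $\alpha$. The hypothesis $\mu\ll\nu$ is used to guarantee that the $\nu$-filter is well defined along $\mu$-typical observation paths: absolute continuity is preserved by both the prediction and the update maps, so $\pi_{(n+1)-}^{\mu,\gamma}\ll\pi_{(n+1)-}^{\nu,\gamma}$ for all $n$, and hence whenever an observation $y$ has positive $\mu$-predictive probability its $\nu$-predictive normalization is also positive.

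For the prediction step, I would invoke Dobrushin's classical contraction inequality $\|\lambda K-\lambda' K\|_{TV}\le(1-\delta(K))\|\lambda-\lambda'\|_{TV}$, applied to $K=\mathcal{T}(\cdot|\cdot,U_n)$ with $\lambda=\pi_n^{\mu,\gamma}$ and $\lambda'=\pi_n^{\nu,\gamma}$. Using $\delta(\mathcal{T}(\cdot|\cdot,u))\ge\tilde\delta(\mathcal{T})$ for every action then yields the pathwise bound $\|\pi_{(n+1)-}^{\mu,\gamma}-\pi_{(n+1)-}^{\nu,\gamma}\|_{TV}\le(1-\tilde\delta(\mathcal{T}))\|\pi_n^{\mu,\gamma}-\pi_n^{\nu,\gamma}\|_{TV}$.

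The main obstacle is the measurement-update step. Writing $P=\pi_{(n+1)-}^{\mu,\gamma}$, $\bar P=\pi_{(n+1)-}^{\nu,\gamma}$, $\eta=P-\bar P$, and $F(y)=\int Q(y|x)P(dx)$, the updated filters are $P_y(dx)\propto Q(y|x)P(dx)$ and $\bar P_y(dx)\propto Q(y|x)\bar P(dx)$. For a test function $f$ with $\|f\|_\infty\le1$ I would split the difference of posteriors into a numerator part and a normalization part,
\begin{equation*}
\int f\,dP_y-\int f\,d\bar P_y=\frac{\int f\,Q(y|\cdot)\,d\eta}{F(y)}-\Big(\int f\,d\bar P_y\Big)\frac{\int Q(y|\cdot)\,d\eta}{F(y)},
\end{equation*}
so that taking the supremum over $f$ gives $\|P_y-\bar P_y\|_{TV}\le \frac{\int Q(y|\cdot)\,d|\eta|}{F(y)}+\frac{|\int Q(y|\cdot)\,d\eta|}{F(y)}$. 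I would then take the conditional expectation over $Y_{n+1}$, which under $P^{\mu,\gamma}$ takes the value $y$ with probability $F(y)$; this weight cancels the denominators. The first term sums to $\sum_y\int Q(y|\cdot)\,d|\eta|=\int\big(\sum_y Q(y|x)\big)\,d|\eta|=\|\eta\|_{TV}$, using $\sum_y Q(y|x)=1$. The crucial refinement is the second term: $\sum_y|\int Q(y|\cdot)\,d\eta|=\sum_y|(PQ)(y)-(\bar PQ)(y)|=\|PQ-\bar PQ\|_{TV}$ is precisely the total variation distance between the $\mu$- and $\nu$-predictive observation laws, which by Dobrushin's inequality applied to $Q$ is at most $(1-\delta(Q))\|\eta\|_{TV}$. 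Bounding $\int f\,d\bar P_y$ by $1$ alone would only produce the factor $2$; it is this identification of the normalization term with an observation-channel contraction that sharpens $2$ to $2-\delta(Q)$. Combining the two contributions gives $E^{\mu,\gamma}[\|\pi_{n+1}^{\mu,\gamma}-\pi_{n+1}^{\nu,\gamma}\|_{TV}\mid Y_{[0,n]}]\le(2-\delta(Q))\|\pi_{(n+1)-}^{\mu,\gamma}-\pi_{(n+1)-}^{\nu,\gamma}\|_{TV}$.

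Finally, I would chain the two bounds using the tower property of conditional expectation to obtain the claimed one-step recursion $E^{\mu,\gamma}[\|\pi_{n+1}^{\mu,\gamma}-\pi_{n+1}^{\nu,\gamma}\|_{TV}]\le\alpha\,E^{\mu,\gamma}[\|\pi_n^{\mu,\gamma}-\pi_n^{\nu,\gamma}\|_{TV}]$ with $\alpha=(1-\tilde\delta(\mathcal{T}))(2-\delta(Q))$. Iterating this inequality from $n=0$ and bounding the initial discrepancy by $\|\pi_0^{\mu,\gamma}-\pi_0^{\nu,\gamma}\|_{TV}\le2$ (the maximal total variation distance between probability measures in this normalization) then yields $E^{\mu,\gamma}[\|\pi_n^{\mu,\gamma}-\pi_n^{\nu,\gamma}\|_{TV}]\le2\alpha^n$, as desired.
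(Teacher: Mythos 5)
This statement is imported into the paper as a citation (Theorem 3.3 of the McDonald--Y\"uksel reference) and is not proved in the paper itself, so there is no in-paper proof to compare against; your argument is, in effect, a reconstruction of the cited reference's proof, and it is correct. The two-step decomposition (Dobrushin contraction by $(1-\tilde{\delta}(\mathcal{T}))$ for the prediction step, then the conditional-expectation bound $(2-\delta(Q))$ for the Bayes update) is exactly the mechanism behind the cited result, and the key refinement you identify --- recognizing $\sum_y\bigl|\int Q(y|\cdot)\,d\eta\bigr|$ as $\|\eta Q\|_{TV}\le(1-\delta(Q))\|\eta\|_{TV}$ rather than crudely bounding it by $\|\eta\|_{TV}$ --- is precisely what sharpens the naive factor $2$ to $2-\delta(Q)$. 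Your accounting of where $\mu\ll\nu$ enters (well-definedness of the $\nu$-filter on $\mu$-typical paths, preserved under both prediction and update) and the final iteration from the trivial bound $\|\pi_0^{\mu,\gamma}-\pi_0^{\nu,\gamma}\|_{TV}\le 2$ are both consistent with the paper's normalization of total variation. No gaps.
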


This result will be a key ingredient for our main results. It provides conditions on when the belief-state processes for a given POMDP under different priors get closer when they are fed with same observation processes, in expectation under the true probability space. In a vague sense, if the state process is tracked only using a finite window of recent measurement and control variables (and forgets the past observations and actions), then the amount of mismatch from the true filter can be bounded with an error that is exponentially diminishing with the window size. The relationship is via the term $\sup_{\pi\in\P(\mathds{X})}\sup_{\gamma\in\Gamma}E_\pi^{\gamma}\left[\rho_{BL}\left(P^{\pi}(X_N\in\cdot|Y_{[0,N]}),P^{\hat{\pi}}(X_N\in\cdot|Y_{[0,N]})\right)\right]$ which appears crucially in (\ref{filterSEqnSum}). We note that if one does not wish to have an explicit rate of convergence result, one could have more relaxed conditions for filter stability which will still lead to rigorous approximation results on the performance of finite window policies via the controlled filter stability analysis in \cite{MYRobustControlledFS}.

\subsection{Reduction to Fully Observed Models and Regularity Properties of Belief-MDPs}
It is by now a standard result that, for optimality analysis, any POMDP can be reduced to a completely observable Markov decision process \citep{Yus76,Rhe74}, whose states are the posterior state distributions or {\it beliefs} of the observer or the filter process as defined in (\ref{filter}); that is, the state at time $n$ is
\begin{align}
\sPr\{X_{n} \in \,\cdot\, | Y_0,\ldots,Y_n, U_0, \ldots, U_{n-1}\} \in \P(\sX). \nonumber
\end{align}
We call this equivalent process the filter process \index{Belief-MDP}. The filter process has state space $\mathcal{Z} = \P(\sX)$  and action space $\sU$. Here, $\mathcal{Z}$ is equipped with the Borel $\sigma$-algebra generated by the topology of weak convergence \citep{Bil99}. As noted earlier, under this topology, $\mathcal{Z}$ is a standard Borel space \citep{Par67}. 
Then, the transition probability $\eta$ of the filter process can be constructed as follows \citep[see also][]{Her89}. If we define the measurable function 
\[F(z,u,y) := F(\,\cdot\,|y,u,z) = \Pr\{X_{n+1} \in \,\cdot\, | Z_n = z, U_n = u, Y_{n+1} = y\}\]
 from ${\cal P}(\mathds{X})\times\mathds{U}\times\mathds{Y}$ to ${\cal P}(\mathds{X})$ and use the stochastic kernel $P(\,\cdot\, | z,u) = \Pr\{Y_{n+1} \in \,\cdot\, | Z_n = z, U_n = u\}$ from ${\cal P}(\mathds{X})\times\mathds{U}$ to $\mathds{Y}$, we can write $\eta$ as
\begin{align}
\eta(\,\cdot\,|z,u) = \int_{\mathds{Y}} 1_{\{F(z,u,y) \in \,\cdot\,\}} P(dy|z,u). \label{beliefK}
\end{align}

The one-stage cost function $\tilde{c}:{\cal P}(\mathds{X}) \times \mathds{U}\rightarrow[0,\infty)$ of the filter process is given by 
\begin{align}\label{belief_cost}
\tilde{c}(z,u) := \int_{\sX} c(x,u) z(dx),
\end{align}
which is a Borel measurable function. Hence, the filter process is a completely observable Markov process with the components $(\mathcal{Z},\sU,\tilde{c},\eta)$.

For the filter process, the information variables is defined as
\[
\tilde{I}_t=\{Z_{[0,t]},U_{[0,t-1]}\}, \quad t \in \mathds{N}, \quad
  \quad \tilde{I}_0=\{Z_0\}.
\]

It is well known that an optimal control policy of the original POMDP can use the belief $Z_t$ as a sufficient statistic for optimal policies \citep[see][]{Yus76,Rhe74}, provided they exist. More precisely, the filter process is equivalent to the original POMP  in the sense that for any optimal policy for the filter process, one can construct a policy for the original POMP which is optimal. On existence, we note the following. 

With the recent results by \citet{FeKaZg14,KSYWeakFellerSysCont} the transition model of the belief-MDP can be shown to satisfy weak continuity conditions on the belief state and action variables, and accordingly we have that the measurable selection conditions \citep[Chapter 3]{HernandezLermaMCP} apply.  Notably, we state the following.

\begin{assumption}\label{TV_channel}
\begin{itemize}
\item[(i)] The transition probability $\mathcal{T}(\cdot|x,u)$ is weakly continuous in $(x,u)$, i.e., for any $(x_n,u_n)\to (x,u)$, $\mathcal{T}(\cdot|x_n,u_n)\to \mathcal{T}(\cdot|x,u)$ weakly.
\item[(ii)] The observation channel $Q(\cdot|x,u)$ is continuous in total variation, i.e., for any $(x_n,u_n) \to (x,u)$, $Q(\cdot|x_n,u_n) \rightarrow Q(\cdot|x,u)$ in total variation.
\end{itemize}
\end{assumption}

\begin{assumption}\label{TV_kernel}
\begin{itemize}
\item[(i)] The transition probability $\mathcal{T}(\cdot|x,u)$ is continuous in total variation in $(x,u)$, i.e., for any $(x_n,u_n)\to (x,u)$, $\mathcal{T}(\cdot|x_n,u_n) \to \mathcal{T}(\cdot|x,u)$ in total variation.
\item[(ii)] The observation channel $Q(\cdot|x)$ is independent of the control variable.
\end{itemize}
\end{assumption}

\begin{theorem} 
\begin{itemize}
\item[(i)] \citep{FeKaZg14} \label{TV_channel_thm}
Under Assumption \ref{TV_channel}, the transition probability $\eta(\cdot|z,u)$ of the filter process is weakly continuous in $(z,u)$.
\item[(ii)] \citep{KSYWeakFellerSysCont} \label{TV_kernel_thm}
Under Assumption \ref{TV_kernel}, the transition probability $\eta(\cdot|z,u)$ of the filter process is weakly continuous in $(z,u)$.
\end{itemize}
\end{theorem}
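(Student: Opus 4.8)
The plan is to reduce the claimed weak continuity of $\eta$ to the joint continuity of two elementary objects, exploiting that $\mathds{Y}$ is finite so that the integral in (\ref{beliefK}) is a finite sum. Concretely, for any bounded weakly continuous test function $g:\mathcal{P}(\mathds{X})\to\R$ one has
\[
\int g\,d\eta(\cdot|z,u)=\sum_{y\in\mathds{Y}} g\big(F(z,u,y)\big)\,P(y|z,u),
\]
so it suffices to show that, whenever $(z_n,u_n)\to(z,u)$ with $z_n\to z$ weakly, (a) the predictive observation weights $P(y|z_n,u_n)\to P(y|z,u)$ for each $y$, and (b) the Bayesian update $F(z_n,u_n,y)\to F(z,u,y)$ weakly for each $y$ with $P(y|z,u)>0$. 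Writing the predictor $z^-(\cdot)=\int_{\mathds{X}}\mathcal{T}(\cdot|x,u)\,z(dx)$, we have $P(y|z,u)=\int_{\mathds{X}}Q(y|x,u)\,z^-(dx)$ and $F(z,u,y)(\cdot)=P(y|z,u)^{-1}\int_{(\cdot)}Q(y|x,u)\,z^-(dx)$, so everything hinges on controlling the predictor $z_n^-$ and then integrating $Q$ against it. Since $\mathds{U}$ is a finite subset of Euclidean space, $u_n\to u$ forces $u_n=u$ for all large $n$, which removes any continuity issue in the action variable and lets us treat $u$ as fixed.

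For part (i), I would first show $z_n^-\to z^-$ weakly: for bounded continuous $f$ the map $x\mapsto\int f(x')\mathcal{T}(dx'|x,u)$ is bounded and continuous by the weak continuity of $\mathcal{T}$, hence $\int f\,dz_n^-=\int\big(\int f\,d\mathcal{T}(\cdot|x,u)\big)z_n(dx)\to\int f\,dz^-$ by weak convergence of $z_n$. Then total-variation continuity of $Q$ gives, for finite $\mathds{Y}$, that each $x\mapsto Q(y|x,u)$ is a bounded \emph{continuous} function, so weak convergence of $z_n^-$ yields $P(y|z_n,u)\to P(y|z,u)$ and, with $f$ bounded continuous, $\int f\,Q(y|\cdot,u)\,dz_n^-\to\int f\,Q(y|\cdot,u)\,dz^-$; dividing by the convergent, positive normalizers gives (b). The finite sum is then assembled, with the terms where $P(y|z,u)=0$ vanishing because $g$ is bounded and $P(y|z_n,u)\to 0$.

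For part (ii) the obstacle is that $Q(y|\cdot)$ is now merely bounded measurable, so weak convergence of the predictor is too weak to integrate it. The key step, and the place where the stronger hypothesis on $\mathcal{T}$ is used, is to upgrade the convergence of the predictor to \emph{setwise} convergence: for each fixed Borel set $A$ the map $x\mapsto\mathcal{T}(A|x,u)$ is bounded and continuous (total-variation continuity of $\mathcal{T}$ controls $|\mathcal{T}(A|x_k,u)-\mathcal{T}(A|x,u)|$ uniformly in $A$), so weak convergence of $z_n$ gives $z_n^-(A)=\int\mathcal{T}(A|x,u)z_n(dx)\to z^-(A)$. By the Vitali--Hahn--Saks theorem this pointwise-in-$A$ convergence of probability measures promotes to $\int\varphi\,dz_n^-\to\int\varphi\,dz^-$ for every bounded measurable $\varphi$. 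Applying this to $\varphi=Q(y|\cdot)$ and to $\varphi=f\,Q(y|\cdot)$ with $f$ bounded continuous yields exactly (a) and (b) (here using that $Q$ does not depend on $u$), and the finite-sum argument concludes as before.

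I expect the main difficulty to lie in part (ii), specifically in justifying that weak convergence of $z_n$ is genuinely improved to setwise convergence of the predictor $z_n^-$: this is the structural reason the two parts require different hypotheses, since weak continuity of $\mathcal{T}$ can only be integrated against a continuous $Q$, whereas total-variation continuity of $\mathcal{T}$ regularizes the predictor enough to absorb a discontinuous $Q$. Care is also needed at the boundary where $P(y|z,u)=0$, where the update $F$ is ill-defined; this is handled by the boundedness of $g$ together with the continuity of the weights $P(y|\cdot,\cdot)$, so that those summands contribute nothing in the limit.
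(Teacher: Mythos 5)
Your argument is correct, but note that the paper itself does not prove this theorem: both parts are imported verbatim from \citet{FeKaZg14} and \citet{KSYWeakFellerSysCont}, and the closest in-paper analogue, the proof of Theorem \ref{belief_kernel_regularity}, proceeds by a quite different quantitative route (the countable-test-function metric (\ref{new_weak_metric}), the $I_+$/$I_-$ decomposition, and Dobrushin-coefficient estimates, yielding Lipschitz bounds rather than bare continuity). Your proof instead exploits the standing finiteness of $\mathds{Y}$ and $\mathds{U}$ to collapse (\ref{beliefK}) into a finite sum and to freeze the action variable, and then checks continuity of the weights $P(y|z,u)$ and of the Bayes maps $F(z,u,y)$ separately; every step is sound. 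In particular, the predictor $z_n^-$ converges weakly under Assumption \ref{TV_channel} because $x\mapsto\int f\,d\mathcal{T}(\cdot|x,u)$ is bounded continuous, and converges setwise under Assumption \ref{TV_kernel} because $x\mapsto\mathcal{T}(A|x,u)$ is bounded continuous for each Borel $A$, which is exactly the regularization needed to integrate the merely measurable likelihoods $Q(y|\cdot)$; the observations with $P(y|z,u)=0$ are correctly absorbed by the boundedness of $g$. Two minor remarks. First, the Vitali--Hahn--Saks theorem is not needed to pass from setwise convergence of the probability measures $z_n^-$ to convergence of $\int\varphi\,dz_n^-$ for bounded measurable $\varphi$: since the limit $z^-$ is already a probability measure, a uniform approximation of $\varphi$ by simple functions gives this directly. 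Second, your argument is genuinely less general than the cited results, which treat Borel observation spaces and non-discrete action sets where the finite-sum decomposition and the reduction to fixed $u$ are unavailable; within the setting of this paper that extra generality is never used, so your proof fully establishes the statement as it is invoked here, and it is considerably more elementary than the machinery the paper leans on elsewhere.
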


Under the above weak continuity conditions, the measurable selection conditions \citep[Chapter 3]{HernandezLermaMCP} apply and a solution to the discounted cost optimality equation exists, and accordingly an optimal control policy exists. This policy is stationary (in the belief state). Thus there exists a
function $\Phi : \P(\mathds{X}) \to \mathds{U}$ such that for any policy $\gamma$ for a prior $\mu$
\begin{align*}
\gamma(y_{[0,n]})=\Phi\left(P^{\mu,\gamma}(X_n\in\cdot|Y_{[0,n]}=y_{[0,n]})\right)=\Phi(\pi_n^{\mu,\gamma})
\end{align*}

In particular, we have that $J_\beta^*(\mu,\mathcal{T},Q)=J_\beta^*(\mu,\eta)$. This will be the case in our paper, under the assumptions we will work with.

For the rest of the paper, we will use $\gamma$ for the belief process policy $\Phi$ for consistency of notation.

The following supporting result, to be used later in the paper, provides further regularity properties on the transition model for the belief model under mild conditions on the fully observed model. This result may be useful for POMDP theory beyond the application considered in this paper. We also note that the bounds in (i) and (iii) below are applicable when we only have filter stability, but not exponential filter stability \citep{MYRobustControlledFS}, whereas items (ii)-(iv) will be used under exponential filter stability in this paper. 
\begin{theorem}\label{belief_kernel_regularity}
\begin{itemize}
\item [i.] Assume that 
\begin{align*}
\|\mathcal{T}(\cdot|x,u)-\mathcal{T}(\cdot|x',u)\|_{TV}\leq \alpha_{\mathds{X}}|x-x'|
\end{align*}
for some $\alpha_{\mathds{X}}<\infty$ for all $u\in\mathds{U}$. We have
\begin{align*}
\rho_{BL}\left(\eta(\cdot|z,u),\eta(\cdot|z',u)\right)\leq 3(1+\alpha_{\mathds{X}})\rho_{BL}(z,z').
\end{align*}

\item[ii.]
 Assume that 
\begin{align*}
\|\mathcal{T}(\cdot|x,u)-\mathcal{T}(\cdot|x',u)\|_{TV}\leq \alpha_{\mathds{X}}|x-x'|
\end{align*}
for some $\alpha_{\mathds{X}}<\infty$ for all $u\in\mathds{U}$. Then, under the conditions of Theorem \ref{curtis_result} we have
\begin{align*}
\rho_{BL}\left(\eta(\cdot|z,u),\eta(\cdot|z',u)\right)\leq (3-2\delta(Q))(1+\alpha_{\mathds{X}})\rho_{BL}(z,z').
\end{align*}

\item[iii.] Without any assumption
\begin{align*}
\rho_{BL}\left(\eta(\cdot|z,u),\eta(\cdot|z',u)\right)\leq 3\|z-z'\|_{TV}.
\end{align*}
\item[iv.] Under the conditions of Theorem \ref{curtis_result}
\begin{align*}
\rho_{BL}\left(\eta(\cdot|z,u),\eta(\cdot|z',u)\right)\leq (3-2\delta(Q))(1-\tilde{\delta}(\mathcal{T}))\|z-z'\|_{TV}.
\end{align*}
\end{itemize}
\end{theorem}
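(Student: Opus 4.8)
The plan is to exploit the fact that the belief-MDP kernel factorizes through the one-step predictor. Writing $\bar z(\cdot):=\int \mathcal{T}(\cdot|x,u)\,z(dx)$ for the predictor associated with $(z,u)$, both the Bayesian update $F(z,u,y)(dx)=\frac{Q(y|x)\bar z(dx)}{\int Q(y|x')\bar z(dx')}$ and the observation predictor $P(y|z,u)=\int Q(y|x)\bar z(dx)$ depend on $z$ only through $\bar z$. Hence $\eta(\cdot|z,u)=\Psi(\bar z)$, where I set $\Psi(\nu):=\sum_{y}\delta_{\bar F(\nu,y)}\,\hat P_\nu(y)$ with $\hat P_\nu(y):=\int Q(y|x)\nu(dx)$ and $\bar F(\nu,y):=\frac{Q(y|\cdot)\nu}{\hat P_\nu(y)}$. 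I would isolate one key total-variation Lipschitz estimate for $\Psi$, namely
\[
\rho_{BL}(\Psi(\nu),\Psi(\nu'))\le (3-2\delta(Q))\,\|\nu-\nu'\|_{TV}\le 3\,\|\nu-\nu'\|_{TV},
\]
and combine it with elementary bounds on $\|\bar z-\bar z'\|_{TV}$. For the latter, note that for any $\phi:\mathds{X}\to\R$ with $\|\phi\|_\infty\le 1$ the function $x\mapsto\int\phi(x')\mathcal{T}(dx'|x,u)$ has sup-norm at most $1$ and, under the hypothesis $\|\mathcal{T}(\cdot|x,u)-\mathcal{T}(\cdot|x',u)\|_{TV}\le\alpha_{\mathds{X}}|x-x'|$, Lipschitz constant at most $\alpha_{\mathds{X}}$; since $\|\bar z-\bar z'\|_{TV}=\sup_{\|\phi\|_\infty\le1}\big|\int\big(\int\phi\,d\mathcal{T}(\cdot|x,u)\big)d(z-z')(x)\big|$, bounded-Lipschitz duality yields $\|\bar z-\bar z'\|_{TV}\le(1+\alpha_{\mathds{X}})\rho_{BL}(z,z')$ (for items i, ii). The two purely total-variation predictor bounds, $\|\bar z-\bar z'\|_{TV}\le\|z-z'\|_{TV}$ and $\|\bar z-\bar z'\|_{TV}\le(1-\tilde{\delta}(\mathcal{T}))\|z-z'\|_{TV}$, are the defining contraction property of the Dobrushin coefficient applied to $\mathcal{T}(\cdot|\cdot,u)$ together with $\tilde{\delta}(\mathcal{T})\le\delta(\mathcal{T}(\cdot|\cdot,u))$ (for items iii, iv).

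To prove the key estimate, fix $f$ on $\mathcal{P}(\mathds{X})$ with $\|f\|_{BL}\le1$ and split
\[
\int f\,d\Psi(\nu)-\int f\,d\Psi(\nu')=\sum_y\big[f(\bar F(\nu,y))-f(\bar F(\nu',y))\big]\hat P_\nu(y)+\sum_y f(\bar F(\nu',y))\big[\hat P_\nu(y)-\hat P_{\nu'}(y)\big].
\]
The second sum is at most $\sum_y|\hat P_\nu(y)-\hat P_{\nu'}(y)|=\|\nu Q-\nu'Q\|_{TV}$. For the first sum I use $|f(\bar F(\nu,y))-f(\bar F(\nu',y))|\le\rho_{BL}(\bar F(\nu,y),\bar F(\nu',y))$, and testing against $g$ on $\mathds{X}$ with $\|g\|_{BL}\le1$, the Bayes ratio identity gives
\[
\hat P_\nu(y)\Big(\int g\,d\bar F(\nu,y)-\int g\,d\bar F(\nu',y)\Big)=\int g\,Q(y|\cdot)\,d(\nu-\nu')-\Big(\int g\,d\bar F(\nu',y)\Big)\big(\hat P_\nu(y)-\hat P_{\nu'}(y)\big).
\]
Since $|\int g\,d\bar F(\nu',y)|\le1$, the right-hand side is at most $|\int g\,Q(y|\cdot)\,d(\nu-\nu')|+|\hat P_\nu(y)-\hat P_{\nu'}(y)|$ in modulus. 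Summing over $y$ and using $\sup_{\|g\|_\infty\le1}|\int g\,Q(y|\cdot)\,d(\nu-\nu')|=\int Q(y|\cdot)\,d|\nu-\nu'|$ together with $\sum_yQ(y|\cdot)\equiv1$, the first contribution telescopes to $\|\nu-\nu'\|_{TV}$ and the correction sums to a further $\|\nu Q-\nu'Q\|_{TV}$. Altogether
\[
\rho_{BL}(\Psi(\nu),\Psi(\nu'))\le\|\nu-\nu'\|_{TV}+2\|\nu Q-\nu'Q\|_{TV}\le\big(1+2(1-\delta(Q))\big)\|\nu-\nu'\|_{TV},
\]
which is exactly $(3-2\delta(Q))\|\nu-\nu'\|_{TV}$, and at most $3\|\nu-\nu'\|_{TV}$ unconditionally.

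Combining $\rho_{BL}(\eta(\cdot|z,u),\eta(\cdot|z',u))=\rho_{BL}(\Psi(\bar z),\Psi(\bar z'))\le(3-2\delta(Q))\|\bar z-\bar z'\|_{TV}$ with the predictor bounds then yields all four claims: the crude constant $3$ with $\|\bar z-\bar z'\|_{TV}\le(1+\alpha_{\mathds{X}})\rho_{BL}(z,z')$ gives (i); keeping $3-2\delta(Q)$ gives (ii); the constant $3$ with $\|\bar z-\bar z'\|_{TV}\le\|z-z'\|_{TV}$ gives (iii); and $3-2\delta(Q)$ with $\|\bar z-\bar z'\|_{TV}\le(1-\tilde{\delta}(\mathcal{T}))\|z-z'\|_{TV}$ gives (iv). I expect the main obstacle to be conceptual rather than computational: a Bayesian update is in general continuous only in total variation, not in the weak (bounded-Lipschitz) topology, so bounding $\rho_{BL}(\eta(\cdot|z,u),\eta(\cdot|z',u))$ directly by $\rho_{BL}(z,z')$ cannot succeed naively. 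The crucial point is that the total-variation Lipschitz hypothesis on $\mathcal{T}$ is precisely what upgrades weak closeness of $z,z'$ to total-variation closeness of the predictors $\bar z,\bar z'$, after which $\Psi$ is controlled in total variation. A second delicate point is the bookkeeping that keeps the constant independent of $|\mathds{Y}|$: this works only because $\sum_yQ(y|\cdot)\equiv1$, which both makes the per-observation estimates telescope and extracts the Dobrushin coefficient $\delta(Q)$, sharpening $3$ to $3-2\delta(Q)$. The only genuine nuisance is the set of observations $y$ with $\hat P_\nu(y)=0$ or $\hat P_{\nu'}(y)=0$, where a posterior is undefined; these carry no predictive mass and are handled by restricting the posterior comparison to the common support, leaving the constants unchanged.
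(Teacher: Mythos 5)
Your proposal is correct, and it reaches all four bounds by a route that is organized quite differently from the paper's. The paper proves each item by splitting $\rho_{BL}(\eta(\cdot|z,u),\eta(\cdot|z',u))$ into the total variation of the measurement marginals $P(\cdot|z,u)$ plus an integrated posterior-difference term, and then controls the latter by working with the countable-test-function metric $\rho(\mu,\nu)=\sum_m 2^{-(m+1)}|\int f_m d\mu-\int f_m d\nu|$ and, for each $f_m$, partitioning $\mathds{Y}$ into the sets $I_\pm$ where the posterior integrals are ordered one way or the other; the four terms of that partition are then bounded by total variation distances of $P$ and of $\mathcal{T}$, and parts (ii) and (iv) are obtained by inserting the Dobrushin contraction of $Q$ into selected terms (the paper splits the constant as $(1-\delta(Q))+(2-\delta(Q))$). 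You instead factor the kernel through the one-step predictor, $\eta(\cdot|z,u)=\Psi(\bar z)$, isolate the single estimate $\rho_{BL}(\Psi(\nu),\Psi(\nu'))\le\|\nu-\nu'\|_{TV}+2\|\nu Q-\nu'Q\|_{TV}\le(3-2\delta(Q))\|\nu-\nu'\|_{TV}$ via the Bayes-ratio identity and the telescoping $\sum_y Q(y|\cdot)\equiv 1$, and then compose with three elementary predictor bounds ($\|\bar z-\bar z'\|_{TV}\le(1+\alpha_{\mathds{X}})\rho_{BL}(z,z')$, $\le\|z-z'\|_{TV}$, $\le(1-\tilde\delta(\mathcal{T}))\|z-z'\|_{TV}$). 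What your organization buys is modularity and transparency: all four items follow from one lemma, it is evident why (i) and (iii) are just the $\delta(Q)\ge 0$ specializations of (ii) and (iv), and the $|\mathds{Y}|$-independence of the constants is visible in the telescoping step. In fact your argument for (ii) is more complete than the paper's, which as written only upgrades the measurement-marginal term by a factor $(1-\delta(Q))$ and leaves the reader to supply the improvement of the posterior term needed to reach $(3-2\delta(Q))$ rather than $(3-\delta(Q))$. Two small points to make fully rigorous in a write-up: fix once and for all which metric on $\mathcal{P}(\mathds{X})$ underlies the Lipschitz seminorm of test functions $f:\mathcal{P}(\mathds{X})\to\R$ (you use $\rho_{BL}$ itself, the paper uses $\rho$; either choice is consistent, but it should be stated), and spell out the reassignment of terms for observations $y$ with $\hat P_{\nu'}(y)=0<\hat P_\nu(y)$, where the contribution should be charged entirely to the $|\hat P_\nu(y)-\hat P_{\nu'}(y)|$ part with $f$ evaluated at $\bar F(\nu,y)$; as you note, this leaves the constants unchanged.
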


\begin{proof} See Section \ref{SecProofReg}. \end{proof}

%
\section{Approximate Model Construction: Finite Belief-MDP through Finite Memory}\label{finite_state}
In this section, we will construct a finite state space by quantizing the belief state space so that the approximate finite model is obtained using only a finite memory. 

Our construction builds on but significantly differs from the approach by \citet{naci_abi_book,SaYuLi15c}. As we will explain, we cannot afford to use uniform quantization in our setup, which was a crucial tool used by  \citet{naci_abi_book,SaYuLi15c}. 

As we discussed in the previous section, we can write the infinite horizon cost as
\begin{align*}
J_\beta(\mathcal{T},Q,\gamma,\mu) &=\sum_{t=0}^{\infty}\beta^tE_\mu\left[\tilde{c}\left(\pi_t,\gamma(\pi_t)\right)\right] \\
& =\sum_{t=0}^{N-1}\beta^tE_\mu\left[\tilde{c}\left(\pi_t,\gamma(\pi_t)\right)\right]+\sum_{t=N}^{\infty}\beta^tE_{\mu}\left[\tilde{c}\left(\pi_t,\gamma(\pi_t)\right)\right].
\end{align*}


Now we focus on the second term:
\begin{align*}
&\sum_{t=N}^{\infty}\beta^tE_{\mu}\left[\tilde{c}\left(\pi_t,\gamma(\pi_t)\right)\right] \\
& =\sum_{t=N}^{\infty}\beta^tE_{\mu}\left[\tilde{c}\left(P^{\mu,\gamma}(X_t\in \cdot|Y_{[0,t]},U_{[0,t-1]}),\gamma(P^{\mu,\gamma}(X_t\in \cdot|Y_{[0,t]},U_{[0,t-1]}))\right)\right].\nonumber
\end{align*}

Notice that for any time step $t\geq N$ and for a fixed observation realization sequence $y_{[0,t]}$ and for a fixed control action sequence $u_{[0,t-1]}$, the state process can be viewed as
\begin{align*}
&P^{\mu}(X_t\in \cdot|Y_{[0,t]}=y_{[0,t]},U_{[0,t-1]}=u_{[0,t-1]})\\
&=P^{\pi_{{t-N}_-}}(X_t\in\cdot|Y_{[t-N,t]}=y_{[t-N,t]},U_{[t-N,t-1]}=u_{[t-N,t-1]})
\end{align*}
where 
\begin{align*}
\pi_{{t-N}_-}(\cdot)=P^{\mu}(X_{t-N}\in \cdot|Y_{[0,t-N-1]}=y_{[0,t-N-1]},U_{[0,t-N-1]}=u_{[0,t-N-1]}).
\end{align*}
That is, we can view the state as the Bayesian update of $\pi_{t-N_-}$, the predictor at time $t-N$, using the observations $Y_{t-N},\dots,Y_{t}$. Notice that with this representation only the most recent $N$ observation realizations are used for the update and the past information of the observations is embedded in $\pi_{t-N_-}$. 

Hence, we can view the state space for time stages $t\geq N$ as
\begin{align*}
\mathcal{Z}=\left\{P^{\pi}(X_N\in\cdot|Y_{[0,N]},U_{[0,N-1]}); \pi\in\P(\mathds{X}), Y_{[0,N]}\in \mathds{Y}^{N+1}, U_{[0,N-1]}\in \mathds{U}^{N}\right\}
\end{align*}

Consider the following finite set $\mathcal{Z}_{\hat{\pi}}^N$ defined by, for a fixed probability measure $\hat{\pi}\in\P(\mathds{X})$, 
\begin{align*}
\mathcal{Z}_{\hat{\pi}}^N:=\left\{P^{\hat{\pi}}(X_N\in\cdot|Y_{[0,N]},U_{[0,N-1]}); Y_{[0,N]}\in \mathds{Y}^{N+1}, U_{[0,N-1]}\in \mathds{U}^{N}\right\}.
\end{align*}

Define the map $F: \mathcal{Z} \to \mathcal{Z}_{\hat{\pi}}^N$, which will serve as the quantizer, with
\begin{align}\label{quant_map}
F(z):=\argmin_{y \in \mathcal{Z}_{\hat{\pi}}^N}\rho_{BL}(z,y) 
\end{align}

This map separates the set $\mathcal{Z}$ into $|\mathds{Y}|^{N+1}\times|\mathds{U}|^N$ sets, accordingly this quantizes the set of probability measures.

To complete our approximate controlled Markov model, we now define the one-stage cost function $c^N:\mathcal{Z}_{\hat{\pi}}^N\times \mathds{U}\to [0,\infty)$ and the transition probability $\eta^N$ on $\mathcal{Z}_{\hat{\pi}}^N$ given realizations in $\mathcal{Z}_{\hat{\pi}}^N\times \mathds{U}$: For a given $z_i=P^{\hat{\pi}}(X_N\in\cdot|y^i_{[0,N]},u^i_{[0,N-1]})$ and control action $u$
\begin{align*}
c^N(z_i,u)&=c^N(P^{\hat{\pi}}(X_N\in\cdot|y^i_{[0,N]},u^i_{[0,N-1]}),u):=\tilde{c}(P^{\hat{\pi}}(X_N\in\cdot|y^i_{[0,N]},u^i_{[0,N-1]}),u),
\end{align*} 
where $\tilde{c}$ is defined in (\ref{belief_cost}) and
\begin{align*}
\eta^N(\cdot|z_i,u)&=\eta_N(\cdot|P^{\hat{\pi}}(X_N\in\cdot|y^i_{[0,N]},u^i_{[0,N-1]},u)) \\
& \qquad \qquad :=F\ast \eta(\cdot|P^{\hat{\pi}}(X_N\in\cdot|y^i_{[0,N]},u^i_{[0,N-1]},u),
\end{align*} 
where 
\[ F\ast \eta(z'|z,u)=\eta(\{z\in {\cal P}(\mathds{X}) : F(z)=z'\}|z,u),\]
for all $z' \in \mathcal{Z}_{\hat{\pi}}^N$. 

We thus have defined a finite state MDP with the state space $\mathcal{Z}_{\hat{\pi}}^N$, action space $\mathds{U}$, cost function $c_N$ and the transition probability $\eta_N$. 

An optimal policy $\gamma_N^*$ for this finite state model is a function taking values from the finite state space and hence at some time step $t\geq N$ it only uses $N$ most recent observations and control action variables that is, $\gamma_N^*$ is a measurable function of the information set $I_t^N$ defined in (\ref{new_info}) for all $t\geq N$.

We list the steps to construct the approximate model informally as follows:

\begin{itemize}
\item Fix a probability distribution $\hat{\pi}\in\P(\mathds{X})$ as an estimator for the predictor of $N$ step back ($\pi_{t-N}^-$).
\item Calculate Bayesian updates for all possible realizations $y_{[0,N]},u_{[0,N-1]}$ ($|\mathds{Y}|^{N+1}\times|\mathds{U}|^N$ many realizations) starting from the prior $\hat{\pi}$ to form the finite subset $\mathcal{Z}_{\hat{\pi}}^N$.
\item Calculate $\eta^N$ (approximate transition model) and $c^N$ (approximate cost function) using a nearest neighbor map. Note that from any $z_i\in\mathcal{Z}_{\hat{\pi}}^N$ there are only $|\mathds{Y}|$ many possible transitions under the true dynamics. To construct the $\eta^N$, these possible transitions are mapped to the closest element in $\mathcal{Z}_{\hat{\pi}}^N$.
\item Calculate the value functions and the optimal policies for the finite model with state space $\mathcal{Z}_{\hat{\pi}}^N$, transition kernel $\eta^N$ and the cost function $c^N$.
\end{itemize}

As we have noted before, the complexity of POMDPs in general arises from the structure of the belief state space $\mathcal{Z}$ which is a set of probability measures on $\mathds{X}$. This set is always uncountable and needs to be associated with proper topologies to make the analysis feasible. Approximations for POMDPs are usually done by choosing a finite subset, say $\hat{\mathcal{Z}}$, of the belief state space $\mathcal{Z}$  \citep{smith2012point,pineau2006anytime,SaYuLi15c,zhou2008density,zhou2010solving,Mahajan2019,zhang2014covering}, and finding an approximate MDP model for this finite set. To choose the finite set, the aforementioned works use a uniform quantization scheme, in various topologies on $\mathcal{Z}$. In other words, the quantization is made such that for any $z\in\mathcal{Z}$, there exists an element $\hat{z}\in\hat{\mathcal{Z}}$ with $\|z-\hat{z}\|\leq \epsilon$ for a fixed $\epsilon>0$. The metric to measure distances of the belief states varies for different works, although for finite $\mathds{X}$, $L_1$ distance of distributions is what is used in general for the quantization of $\mathcal{Z}$,  which coincides with total variation and weak convergence topology on $\mathcal{Z}$ when $\mathds{X}$ is finite; for general $\mathds{X}$, a more appropriate and natural topology is the weak convergence topology on $\mathcal{Z}$ which is what we work with in the paper since $\rho_{BL}$ metrizes the weak convergence. 

In this paper, instead of quantizing $\mathcal{Z}$ directly and uniformly, we use finite window information variables ($I_t^N$'s) to construct the finite subset of $\mathcal{Z}$ since our goal is to analyze the effect of the window size on the approximation performance. That is, we use the finite set
\begin{align*}
\mathcal{Z}_{\hat{\pi}}^N:=\left\{P^{\hat{\pi}}(X_N\in\cdot|Y_{[0,N]},U_{[0,N-1]}); Y_{[0,N]}\in \mathds{Y}^{N+1}, U_{[0,N-1]}\in \mathds{U}^{N}\right\}
\end{align*}
constructed using $Y_{[0,N]},U_{[0,N-1]}$. For this set, we cannot afford a uniform discretization scheme. A uniform quantization would mean that for a fixed $\epsilon>0$
\begin{align*}
\rho_{BL}\left(P^{\hat{\pi}}(X_N\in\cdot|y_{[0,N]},u_{[0,N-1]}),P^{\pi}(X_N\in\cdot|y_{[0,N]},u_{[0,N-1]})\right)<\epsilon
\end{align*}
uniformly for any $\pi\in\mathcal{Z}$ and for any $y_{[0,N]}\in \mathds{Y}^{N+1}, u_{[0,N-1]}\in \mathds{U}^{N}$. However, this is in general inapplicable for filter stability problems as it requires for the processes with different starting points to converge uniformly for any realizations of information variables (even for highly unlikely ones). That is why, we follow a different approach and show that we do not have to force uniform quantization, and the error of approximate value functions can be related to the expected error of the form
\begin{align*}
E_\pi^{\gamma}\left[\rho_{BL}\left(P^{\pi}(X_N\in\cdot|Y_{[0,N]},U_{[0,N-1]}),P^{\hat{\pi}}(X_N\in\cdot|Y_{[0,N]},U_{[0,N-1]})\right)\right]
\end{align*}
which in turn can be bounded using Theorem \ref{curtis_result}. Our technical analysis, accordingly, is slightly more tedious at the benefit of arriving at a practical and intuitive finite-memory method whose near optimality is rigorously established.

\begin{remark}\label{obs_quant}
The finite subset $\mathcal{Z}_{\hat{\pi}}^N$ is crucial for the approximation of the belief MDP we construct in this section. This set depends on the choice of $\hat{\pi}$ as the starting probability distribution and the finite window information variables $Y_{[0,N]},U_{[0,N-1]}$. \citet{kara2021convergence} consider a similar approach for construction of an approximate POMDP model by using the same finite set  $\mathcal{Z}_{\hat{\pi}}^N$. However, instead of using a nearest neighbor map for the correspondence between the original belief space and the finite set  $\mathcal{Z}_{\hat{\pi}}^N$, there, the states with matching finite history are used for the correspondence by putting a different topology on the belief space and the set $\mathcal{Z}_{\hat{\pi}}^N$. The method used in this paper naturally results in a smaller approximation error due to the nature of the nearest neighbor map, and lets us work with the weak convergence topology and $\rho_{BL}$ metric. On one hand, \citet{kara2021convergence} provide a greater error term in terms of the total variation distance, which always upper bounds the $\rho_{BL}$ metric, on the other hand, the method used there, is computationally more efficient and the approximate model can actually be learned with a reinforcement learning algorithm that uses finite window information variables.

On the choice of $\hat{\pi}$, we note that Theorem \ref{mainThmNIPS} provides a bound that is independent of the choice of $\hat{\pi}$. However, since this is only an upper bound, it is apparent that the true value of the error may depend on different choices of $\hat{\pi}$. For the approximate model, we use $\hat{\pi}$ as an estimator for the true predictor $\pi_{t_-}^{\mu,\gamma}(\cdot)=P^{\mu,\gamma}(X_t\in \cdot|Y_{[0,t-1]})$ at any given time $t$ under some policy $\gamma$. Since $\hat{\pi}$ is a fixed probability distribution and does not vary with time, we can argue that a reasonable choice would be the time averages of $\pi_{t_-}^{\mu,\gamma^*}(\cdot)$ under an optimal policy $\gamma^*$ and if the hidden state process $X_t$ admits an invariant measure under this optimal policy $\gamma^*$, the time averages of $\pi_{t_-}^{\mu,\gamma^*}(\cdot)$ will converge to the invariant measure of $X_t$ under the optimal policy. However, one issue with this approach is that the designer does not have access to the optimal policy and thus, cannot compute the invariant measure under $\gamma^*$. The designer, instead, can use the approximate optimal policy $\gamma^N$, but the choice of $\hat{\pi}$ also affects the approximate policy $\gamma^N$, and hence, this approach may not feasible for practical purposes. In any case, our result provides a bound uniform over the prior selections.


We now discuss the the role of the realizations of $Y_{[0,N]},U_{[0,N-1]}$. Notice that, to construct $\mathcal{Z}_{\hat{\pi}}^N$, we consider all possible $|\mathds{Y}|^{N+1}\times |\mathds{U}|^{N}$ realizations which reduces the uncountable state space to a finite one but the size of this finite subset grows fast with the size of window, $N$, and with the size of spaces $\mathds{Y}$ and $\mathds{U}$. The bound we provide in Theorem \ref{finite_MDP}, reveals that the approximation error  is related to the expectation over the realizations of $Y_{[0,N]},U_{[0,N-1]}$ under the true dynamics, which suggests that if a realization $y_{[0,N]},u_{[0,N-1]}$, is highly unlikely under the true dynamics, these realizations can be discarded when constructing $\mathcal{Z}_{\hat{\pi}}^N$ for computational ease by reducing the size of $\mathcal{Z}_{\hat{\pi}}^N$. 

In fact this question motivates the following direction. A close look at Definition \ref{dob_def} reveals that the Dobrushin coefficient of a channel and a channel obtained by quantizing the measurements of that same channel are so that the coefficient of the former is a lower bound for the latter. Therefore, one can always quantize the measurement channels further if the original channel satisfies the contraction condition presented for filter stability, with the quantized channel also satisfying the contraction property. This presents a recipe for dealing with both low probability channel outcomes or even continuous space measurement channels; however one would additionally need to show that the value function of the approximate model with quantized measurements would be close to the value function of the original model. This is a possible future direction (using the weak Feller property of the belief-MDP)  \citep[as shown e.g. by][]{SaYuLi15c} and continuity properties of filter updates in measurement realizations \cite{MYRobustControlledFS}.
\end{remark}


\section{Approximation Error Analysis and Rates of Convergence}


An optimal policy for the constructed finite model, $\gamma_N^*$ can be extended to ${\cal P}(\mathds{X})$ and can be used for the original MDP. 

\subsection{Analysis via Expected Filter Approximation Error}
The next result, which is related to the construction given by \citet[Theorem 4.38]{naci_abi_book} \citep[see also][]{SaYuLi15c}, provides a mismatch error for using this policy. This result is going to be the key supporting tool for the main theorem of the paper, which will be presented immediately after. The proof requires multiple technical lemmas and are presented in Section \ref{proofThmMain} (with some supporting but tedious technical steps moved to the Appendix). 

\begin{assumption}\label{belief_reg}
\begin{itemize}
\item  $\rho_{BL}(\eta(\cdot|z,u),\eta(\cdot|z',u))\leq \alpha_{\mathcal{Z}}\rho_{BL}(z,z')$ (see Theorem \ref{belief_kernel_regularity}).
\item  $|\tilde{c}(z,u)-\tilde{c}(z',u)|\leq \alpha_{\tilde{c}}\rho_{BL}(z,z')$  for some $\alpha_{\tilde{c}}$ for all $u\in\mathds{U}$.
\end{itemize}
\end{assumption}

\begin{theorem}\label{finite_MDP}
Under Assumption \ref{belief_reg}, for all $z\in {\cal P}(\mathds{X})$
of the form 
\[z=P^{\pi}(X_N\in\cdot|y_{[0,N]},u_{[0,N-1]}),\]
the following holds if $\beta<\frac{1}{4\alpha_\mathcal{Z}+1}$:

\begin{align*}
&\sup_{\gamma\in\Gamma}E\bigg[J_\beta(\eta,\gamma_N^*,z)-J_\beta(\eta,\gamma^*,z)\bigg|Y_{[0,N]},\gamma(Y_{[0,N-1]})\bigg]\\
&\leq K \sup_{\pi\in\P(\mathds{X})}\sup_{\gamma\in\Gamma}E_\pi^{\gamma}\left[\rho_{BL}\left(P^{\pi}(X_N\in\cdot|Y_{[0,N]},U_{[0,N-1]}),P^{\hat{\pi}}(X_N\in\cdot|Y_{[0,N]},U_{[0,N-1]})\right)\right].
\end{align*}
Where $K$ is a constant that depends on $\beta,\alpha_{\mathcal{Z}},\alpha_{\tilde{c}}$ and $\|\tilde{c}\|_\infty$. (The exact expression for the constant can be found in the proof).
\end{theorem}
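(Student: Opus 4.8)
The plan is to treat this as an approximate-MDP result: $\gamma_N^*$ is optimal for the finite model $(\mathcal{Z}_{\hat{\pi}}^N,\mathds{U},c^N,\eta^N)$, and I want to bound the loss incurred when its extension $z\mapsto\gamma_N^*(F(z))$ is run on the true belief-MDP, where $J_\beta(\eta,\gamma^*,z)=J^*(z):=J_\beta^*(z,\eta)$. The organizing observation is that every belief encountered along a trajectory is a \emph{reachable} belief of the form $P^{\pi}(X_N\in\cdot\,|\,y_{[0,N]},u_{[0,N-1]})$, and that, because $F(z)$ is the nearest neighbor of $z$ in $\mathcal{Z}_{\hat{\pi}}^N$ while $\mathcal{Z}_{\hat{\pi}}^N$ contains the point $P^{\hat{\pi}}(X_N\in\cdot\,|\,y_{[0,N]},u_{[0,N-1]})$ built from the \emph{same} realization, the quantization error obeys $\rho_{BL}(z,F(z))\le\rho_{BL}\big(P^{\pi}(X_N\in\cdot|y_{[0,N]},u_{[0,N-1]}),P^{\hat{\pi}}(X_N\in\cdot|y_{[0,N]},u_{[0,N-1]})\big)$. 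Averaging this over realizations is exactly the filter-stability quantity on the right-hand side, so the whole argument must be arranged to express the loss through \emph{expected} rather than uniform quantization errors. First I would establish Lipschitz regularity of $J^*$ in $\rho_{BL}$: using the optimality equation $J^*(z)=\min_u[\tilde{c}(z,u)+\beta\int J^*\,d\eta(\cdot|z,u)]$, Assumption \ref{belief_reg}, and the duality $|\int J^*d\mu-\int J^*d\nu|\le\|J^*\|_{BL}\,\rho_{BL}(\mu,\nu)$, a self-bounding argument gives a finite bound on $\Lip(J^*)$ in terms of $\alpha_{\tilde{c}},\alpha_{\mathcal{Z}},\|\tilde{c}\|_\infty,\beta$, with $\|J^*\|_\infty\le\|\tilde{c}\|_\infty/(1-\beta)$; the smallness of $\beta$ is what keeps this constant finite.

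Next I would control the value-function mismatch $\Delta:=\max_{z_i\in\mathcal{Z}_{\hat{\pi}}^N}|J^*(z_i)-\hat{J}(z_i)|$, where $\hat{J}$ is the finite model's optimal value. Since $c^N=\tilde{c}$ and $\eta^N=F\ast\eta$ on $\mathcal{Z}_{\hat{\pi}}^N$, subtracting the two optimality equations and bounding $|J^*(w)-\hat{J}(F(w))|\le\Lip(J^*)\,\rho_{BL}(w,F(w))+\Delta$ yields a contraction inequality $\Delta\le\beta\Delta+\beta\,\Lip(J^*)\,\bar{\delta}$, hence $\Delta\le\tfrac{\beta\,\Lip(J^*)}{1-\beta}\,\bar{\delta}$, where $\bar{\delta}:=\max_{z_i,u}\int\rho_{BL}(w,F(w))\,\eta(dw|z_i,u)$ is the one-step-ahead expected quantization error. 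By the window/reachability structure (one-step updates of codebook points are again reachable beliefs with a shifted window) together with the nearest-neighbor property, $\bar{\delta}$ is bounded by the filter-stability term, so $\Delta$ is proportional to it.

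Finally I would invoke the performance-difference identity $J_\beta(\eta,\gamma_N^*,z)-J^*(z)=E^{\gamma_N^*}_z\big[\sum_{t\ge0}\beta^t\big(Q^*(Z_t,\gamma_N^*(F(Z_t)))-J^*(Z_t)\big)\big]$ with $Q^*(z,u):=\tilde{c}(z,u)+\beta\int J^*\,d\eta(\cdot|z,u)$. I bound the per-step suboptimality by comparing $Q^*(z,\cdot)$ with the finite-model $Q$-function $\hat{Q}(F(z),\cdot)$ uniformly in $u$: a cost-Lipschitz term $\alpha_{\tilde{c}}\rho_{BL}(z,F(z))$, a transition-Lipschitz term $\beta\|J^*\|_{BL}\alpha_{\mathcal{Z}}\rho_{BL}(z,F(z))$, and a value-mismatch term $\beta(\Lip(J^*)\bar{\delta}+\Delta)$; since $\gamma_N^*(F(z))$ minimizes $\hat{Q}(F(z),\cdot)$, a standard two-sided comparison gives $Q^*(z,\gamma_N^*(F(z)))-J^*(z)\le2\epsilon(z)$ for this aggregate $\epsilon(z)$. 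Substituting into the identity, applying $E[\cdot\,|\,Y_{[0,N]},\gamma(Y_{[0,N-1]})]$ and the suprema over $\pi,\gamma$, and bounding each encountered $E[\rho_{BL}(Z_t,F(Z_t))]$ by the filter-stability term via the reachability observation, collapses every contribution onto the single quantity on the right-hand side, with $K$ assembled from $\tfrac{1}{1-\beta}$, $\alpha_{\tilde{c}}$, $\alpha_{\mathcal{Z}}$, $\|\tilde{c}\|_\infty$ and $\Lip(J^*)$.

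The hard part will be that the quantizer $F$ is piecewise constant, so $\hat{J}\circ F$ is \emph{not} Lipschitz and errors can never be propagated through it directly; every comparison must instead be routed through the Lipschitz reference $J^*$, which costs an extra quantization displacement each time it is inserted. Coupling this with the discipline of keeping \emph{all} quantization errors in expected form — so that the expected filter-stability bound of Theorem \ref{curtis_result} suffices and no (unavailable) uniform convergence is needed — is the delicate bookkeeping. It is precisely the requirement that the simultaneously-running geometric recursions for $\Lip(J^*)$, for $\Delta$, and for the performance sum all converge that forces the explicit small-discount condition $\beta<\frac{1}{4\alpha_{\mathcal{Z}}+1}$.
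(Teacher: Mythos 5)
Your route is genuinely different from the paper's: you bound the value mismatch on codebook points by a sup-norm contraction ($\Delta\le\beta\Delta+\beta\Lip(J^*)\bar\delta$) and then convert policy loss via a performance-difference identity, whereas the paper runs parallel value iterations on the two models, propagates \emph{expected} quantization losses $E^\gamma_z[L(Z_t)]$ along true trajectories, and pays a change-of-measure factor $(4\alpha_{\mathcal{Z}}+1)^m$ for comparing the marginals of $Z_t$ under $\eta$ and $F\ast\eta$ (this is where $\beta<\tfrac{1}{4\alpha_{\mathcal{Z}}+1}$ actually comes from). However, there is a genuine gap in your argument. Your quantities $\Delta=\max_{z_i}|J^*(z_i)-\hat J(z_i)|$ and $\bar\delta=\max_{z_i,u}\int\rho_{BL}(w,F(w))\,\eta(dw|z_i,u)$ are worst-case over codebook points. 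For a codebook point $z_i=P^{\hat\pi}(X_N\in\cdot|y_{[0,N]},u_{[0,N-1]})$, the integral against $\eta(\cdot|z_i,u)$ averages only over the \emph{next} observation, with the remaining $N$ window entries frozen at the realization defining $z_i$; so $\bar\delta$ is a maximum over conditioning realizations of a conditional expectation of the filter discrepancy. Such a quantity is \emph{not} dominated by the unconditional expectation
\begin{align*}
\sup_{\pi}\sup_{\gamma}E_\pi^{\gamma}\left[\rho_{BL}\left(P^{\pi}(X_N\in\cdot|Y_{[0,N]},U_{[0,N-1]}),P^{\hat{\pi}}(X_N\in\cdot|Y_{[0,N]},U_{[0,N-1]})\right)\right],
\end{align*}
which is the right-hand side of the theorem; it is only controlled by a uniform-over-realizations bound such as (\ref{TVUnifB}). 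So your argument, carried through, proves the analogue of Theorem \ref{unif_cont} (uniform filter-stability error, under the milder condition $\beta\alpha_{\mathcal{Z}}<1$), not Theorem \ref{finite_MDP}. The paper explicitly rejects uniform quantization error for exactly this reason: uniform filter stability over all (including highly unlikely) observation realizations is generally unavailable.

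This is not mere bookkeeping: avoiding any maximum over codebook points is the structural reason the paper's proof is organized the way it is. It keeps every quantization error inside an expectation over the full future window (so that Lemma \ref{quant_bound} and Theorem \ref{curtis_result} apply by iterated conditioning), and the price is the comparison of the pushforward marginals $P^{\gamma}_{z,t}$ and $P^{\gamma,N}_{z,t}$ in $\rho_{BL}$, which produces the $(4\alpha_{\mathcal{Z}}+1)^t$ growth and hence the stated restriction on $\beta$. Your closing claim that your recursions ``force'' $\beta<\tfrac{1}{4\alpha_{\mathcal{Z}}+1}$ does not hold: nothing in a sup-norm contraction argument generates that constant, which is further evidence that the two proofs are not interchangeable. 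To repair your proposal you would either have to replace $\Delta$ and $\bar\delta$ by trajectory-indexed expected errors (which essentially reconstructs the paper's Lemmas \ref{cont_bound} and \ref{quant_bound}), or weaken the conclusion to the uniform-error version.
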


\begin{remark}
We note that the upper bound for $\beta$ can be chosen as $\frac{1}{(2+\|L\|_\infty)\alpha_{\mathcal{Z}}+1}$,  instead of $4\alpha_\mathcal{Z}+1$ (see Remark \ref{beta_remark} in Appendix), where 
\begin{align*}
\|L\|_\infty=\sup_{\pi\in\P(\mathds{X})}\sup_{\gamma\in\Gamma}\sup_{Y_{[0,N]},U_{[0,N-1]}} \rho_{BL}\left(P^{\pi}(\cdot|Y_{[0,N]},U_{[0,N-1]}),P^{\hat{\pi}}(\cdot|Y_{[0,N]},U_{[0,N-1]})\right).
\end{align*}
Hence, under a uniform filter stability bound, the upper bound can be chosen near $\frac{1}{2\alpha_{\mathcal{Z}}+1}$.
\end{remark}
The proof of this theorem is rather long, and accordingly, it is presented in Section \ref{proofThmMain}.

Before presenting the main result of the paper, we provide further supporting results that will let us work with a probability density function. 
\begin{lemma}\label{density_tv}
Assume that the transition kernel $\mathcal{T}(dx_1|x_0,u_0)$ admits a density function $f$ with respect to a reference measure $\phi$ such that $\mathcal{T}(dx_1|x_0,u_0)=f(x_1,x_0,u_0)\phi(dx_1)$.
If $|f(x_1,x_0,u_0)-f(x_1,{x_0}',u_0)|\leq \alpha_{\mathds{X}} |x_0-{x_0}'|$ for all $x_1,x_0,{x_0}'\in\mathds{X}$ and $u_0\in\mathds{U}$ then
\[\|\mathcal{T}(\cdot|x_0,u_0)-\mathcal{T}(\cdot|{x_0}',u_0)\|_{TV}\leq \alpha_{\mathds{X}}|x-x'|.\]
\end{lemma}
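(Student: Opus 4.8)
The plan is to reduce everything to the standard integral representation of the total variation distance between two measures that are both absolutely continuous with respect to a common reference measure. First I would recall that, in the paper's convention where $\|\mu-\nu\|_{TV} = \sup_{\|g\|_\infty \leq 1}\left|\int g\, d\mu - \int g\, d\nu\right|$, any two measures $\mu = p\,\phi$ and $\nu = q\,\phi$ with densities against a common $\phi$ satisfy $\|\mu-\nu\|_{TV} = \int |p - q|\, d\phi$: indeed $\int g\, d\mu - \int g\, d\nu = \int g\,(p-q)\, d\phi$, and the supremum over $\|g\|_\infty \leq 1$ is attained by $g = \mathrm{sgn}(p - q)$, which is admissible since it is bounded by $1$.

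Applying this with $\mu = \mathcal{T}(\cdot|x_0,u_0)$ and $\nu = \mathcal{T}(\cdot|x_0',u_0)$, whose $\phi$-densities are $f(\cdot, x_0, u_0)$ and $f(\cdot, x_0', u_0)$, gives
\[ \|\mathcal{T}(\cdot|x_0,u_0) - \mathcal{T}(\cdot|x_0',u_0)\|_{TV} = \int_{\mathds{X}} |f(x_1,x_0,u_0) - f(x_1,x_0',u_0)|\, \phi(dx_1). \]
The next step is to insert the hypothesized pointwise Lipschitz bound $|f(x_1,x_0,u_0)-f(x_1,x_0',u_0)| \leq \alpha_{\mathds{X}}|x_0 - x_0'|$ under the integral sign, pulling the constant $\alpha_{\mathds{X}}|x_0-x_0'|$ out and leaving $\int_{\mathds{X}} \phi(dx_1)$.

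The only point requiring care — and the place where an implicit normalization enters — is that this last integral equals $\phi(\mathds{X})$, so the clean bound $\alpha_{\mathds{X}}|x_0-x_0'|$ is obtained exactly when $\phi$ is a probability measure (the natural choice in the filtering setting); for a general finite reference measure the constant would acquire a factor $\phi(\mathds{X})$, which could instead be folded into $\alpha_{\mathds{X}}$. I would state this normalization explicitly. I do not expect any genuine obstacle: the entire argument is the density representation of total variation followed by a pointwise-to-integral passage, so the only thing to be precise about is the convention for $\|\cdot\|_{TV}$ and the normalization of $\phi$. (I would also silently read the statement's $|x-x'|$ as $|x_0-x_0'|$, which is clearly the intended quantity.)
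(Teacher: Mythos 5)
Your proof is correct and is the standard argument; the paper in fact omits the proof of this lemma entirely, so your derivation (total variation equals the $L^1$ distance of the densities with respect to the common reference measure, followed by the pointwise Lipschitz bound under the integral) supplies exactly what is missing. Your caveat about the normalization $\phi(\mathds{X})$ is also well taken: the clean constant $\alpha_{\mathds{X}}$ requires $\phi(\mathds{X})\leq 1$, which is consistent with how the lemma is used in the paper, where the dominating measure $\hat{\pi}$ of Assumption \ref{main} is a probability measure.
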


We note that using this result, assumptions of Theorem \ref{belief_kernel_regularity} can be expressed with the Lipschitz condition on the density function noted above. We now restate the assumptions that will be used for the main result.
\begin{assumption}\label{main}
\hfill
\begin{itemize}
\item[1.]  The transition kernel $\mathcal{T}(dx_1|x_0,u_0)$ admits a density function $f$ with respect to a reference measure $\phi$ such that $\mathcal{T}(dx_1|x_0,u_0)=f(x_1,x_0,u_0)\phi(dx_1).$
\item[2.] There exists some $\alpha_{\mathds{X}}<\infty$ such that $|f(x_1,x_0,u_0)-f(x_1,{x_0}',u_0)|\leq \alpha_{\mathds{X}} |x_0-{x_0}'|.$
\item[3.] There exists some $\alpha_c<\infty$ such that for all $u\in\mathds{U}$, $|c(x,u)-c(x',u)|\leq \alpha_c|x-x'|.$

\item[4.] $\alpha:=(1-\tilde{\delta}(\mathcal{T}))(2-\delta(Q))<1$, 
\item[5.] The transition kernel $\mathcal{T}$ is dominated, i.e. there exists a dominating measure $\hat{\pi} \in \P(\mathds{X})$ such that for every $x\in\mathds{X}$ and $u \in \mathds{U}$, $\mathcal{T}(\cdot|x,u)\ll\hat{\pi}(\cdot)$ that is $\mathcal{T}(\cdot|x,u)$ is absolutely continuous with respect to $\hat{\pi}$ for every $x\in\mathds{X}$ and $u \in \mathds{U}$. 
\end{itemize}
\end{assumption}

Before the result, we discuss the Lipschitz constants of interest and their relation to each other. First, note that by Lemma \ref{density_tv}, Assumption \ref{main} (second item) implies that  $\|\mathcal{T}(\cdot|x_0,u_0)-\mathcal{T}(\cdot|{x_0}',u_0)\|_{TV}\leq \alpha_{\mathds{X}}|x-x'|$. Hence, by Theorem \ref{belief_kernel_regularity}, we can have various bounds on $\alpha_\mathcal{Z}$, where $\rho_{BL}(\eta(\cdot|z,u),\eta(\cdot|z',u))\leq \alpha_{\mathcal{Z}}\rho_{BL}(z,z')$. In particular, we have  $\alpha_{\mathcal{Z}}\leq (3-2\delta(Q))(1+\alpha_{\mathds{X}})$. 

\begin{theorem}\label{mainThmNIPS}
Assume that we let the system start running for $N$ time steps under a known policy $\gamma$ (which may not be optimal), and the finite window policy starts acting after observing $N$ information variables at time $t=N$. Under Assumption \ref{main}, if $\beta<\frac{1}{4\alpha_\mathcal{Z}+1}$ we have
\begin{align*}
&E_\mu^{\gamma}\left[J_\beta(\pi_{N_-},\mathcal{T},\gamma^*_N)-J_\beta(\pi_{N_-},\mathcal{T},\gamma^*)|Y_{[0,N]},U_{[0,N-1]}\right]\leq K(\beta,\alpha_{\mathds{X}},\alpha_c,\|c\|_\infty) \alpha^N.
\end{align*}
where $\gamma_N^*$ is the optimal finite window policy and where  $K(\beta,\alpha_{\mathds{X}},\alpha_{c},\|c\|_\infty)$ is a constant that depends on $\beta,\alpha_{\mathds{X}},\alpha_{c}$ and $\|c\|_\infty$ (The exact formula for the constant can be found in the appendix in (\ref{KDefinition1})). 

In the statement, $J_\beta(\pi_{N_-},\mathcal{T},\gamma^*_N)$ (respectively $J_\beta(\pi_{N_-},\mathcal{T},\gamma^*)$) denotes the cost under the policy $\gamma^*_N$ (respectively $\gamma^*$) when the initial prior distribution is $\pi_{N_-}$, where $\pi_{N_-}=Pr(X_N\in\cdot|Y_{[0,N-1]},U_{[0,N-1]})$ and the expectation is with respect to the realizations of $Y_{[0,N-1]},U_{[0,N-1]}$.
\end{theorem}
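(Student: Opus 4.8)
The plan is to obtain the bound as a corollary of the cost-mismatch estimate in Theorem~\ref{finite_MDP} together with the exponential filter-stability estimate in Theorem~\ref{curtis_result}. Theorem~\ref{finite_MDP} already reduces the loss $J_\beta(\pi_{N_-},\mathcal{T},\gamma_N^*)-J_\beta(\pi_{N_-},\mathcal{T},\gamma^*)$ to a constant multiple of an expected filter-approximation error measured in $\rho_{BL}$, and Theorem~\ref{curtis_result} shows that this error decays like $\alpha^N$. The work therefore splits into: (i) checking that the concrete Assumption~\ref{main} implies the abstract belief-MDP regularity Assumption~\ref{belief_reg} required by Theorem~\ref{finite_MDP}; (ii) invoking Theorem~\ref{finite_MDP}; and (iii) bounding the resulting filter error by $2\alpha^N$.

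For step (i), items 1--2 of Assumption~\ref{main} give a Lipschitz transition density, so Lemma~\ref{density_tv} yields $\|\mathcal{T}(\cdot|x,u)-\mathcal{T}(\cdot|x',u)\|_{TV}\leq\alpha_{\mathds{X}}|x-x'|$. Plugging this into Theorem~\ref{belief_kernel_regularity} produces the $\rho_{BL}$-Lipschitz constant of $\eta$, with $\alpha_{\mathcal{Z}}\leq(3-2\delta(Q))(1+\alpha_{\mathds{X}})\leq 3(1+\alpha_{\mathds{X}})$. Item 3 gives $\|c(\cdot,u)\|_{BL}\leq\|c\|_\infty+\alpha_c$ for every $u$, whence $|\tilde{c}(z,u)-\tilde{c}(z',u)|=|\int c(\cdot,u)\,d(z-z')|\leq(\|c\|_\infty+\alpha_c)\rho_{BL}(z,z')$, i.e. $\alpha_{\tilde{c}}\leq\|c\|_\infty+\alpha_c$, together with $\|\tilde{c}\|_\infty\leq\|c\|_\infty$. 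Thus Assumption~\ref{belief_reg} holds, and the hypothesis $\beta<\tfrac{1}{4\alpha_{\mathcal{Z}}+1}$ is exactly the one demanded by Theorem~\ref{finite_MDP}.

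For steps (ii)--(iii), I would apply Theorem~\ref{finite_MDP} with the prior $\pi_{N_-}$ playing the role of $\pi$ and the future window playing the role of $Y_{[0,N]},U_{[0,N-1]}$; this bounds the conditional loss by $K$ times $\sup_{\pi}\sup_{\gamma}E_\pi^\gamma[\rho_{BL}(P^\pi(X_N\in\cdot|Y_{[0,N]},U_{[0,N-1]}),P^{\hat{\pi}}(X_N\in\cdot|Y_{[0,N]},U_{[0,N-1]}))]$. Since the unit $\|\cdot\|_{BL}$ ball sits inside the unit $\|\cdot\|_\infty$ ball, $\rho_{BL}(\mu,\nu)\leq\|\mu-\nu\|_{TV}$, so this filter error is at most the expected total-variation distance between the time-$N$ filters started from $\pi$ and $\hat{\pi}$. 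Applying Theorem~\ref{curtis_result} with $\mu=\pi$ and $\nu=\hat{\pi}$ bounds it by $2\alpha^N$. Combining with step (ii) gives the loss $\leq 2K\alpha^N$; absorbing the factor $2$ and re-expressing $K$ through $\alpha_{\mathcal{Z}}\leq 3(1+\alpha_{\mathds{X}})$, $\alpha_{\tilde{c}}\leq\|c\|_\infty+\alpha_c$, $\|\tilde{c}\|_\infty\leq\|c\|_\infty$ produces the constant $K(\beta,\alpha_{\mathds{X}},\alpha_c,\|c\|_\infty)$.

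The step I expect to be the main obstacle is ensuring the absolute-continuity hypothesis $\pi\ll\hat{\pi}$ needed to invoke Theorem~\ref{curtis_result}, since it fails for a generic initial prior $\mu$. This is precisely why the statement first runs the system for $N$ steps and measures cost from the predictor $\pi_{N_-}$: any such predictor has the form $\int\mathcal{T}(\cdot|x,u)\,\rho(dx)$ for some $\rho\in\P(\mathds{X})$, so the domination hypothesis (Assumption~\ref{main}, item 5) forces $\pi_{N_-}\ll\hat{\pi}$, and likewise every predictor $\pi_{t-N_-}$ arising as a starting point in the $\sup_\pi$ is dominated by $\hat{\pi}$; thus Theorem~\ref{curtis_result} applies uniformly over the relevant priors. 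A secondary, more clerical difficulty is that Theorem~\ref{finite_MDP} is stated for the belief-MDP cost $J_\beta(\eta,\cdot,z)$ whereas the statement uses the POMDP cost $J_\beta(\pi_{N_-},\mathcal{T},\cdot)$; these agree by the belief-MDP reduction with $\pi_{N_-}$ as the initial belief, but one must track the conditioning on $Y_{[0,N]},U_{[0,N-1]}$ and carry the constant faithfully through the belief-space Lipschitz bounds.
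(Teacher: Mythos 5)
Your proposal is correct and follows essentially the same route as the paper's own proof: verify Assumption~\ref{belief_reg} from Assumption~\ref{main} via Lemma~\ref{density_tv} and Theorem~\ref{belief_kernel_regularity}, pass to the belief-MDP and invoke Theorem~\ref{finite_MDP}, then dominate the $\rho_{BL}$ filter error by total variation and apply Theorem~\ref{curtis_result}, using item~5 of Assumption~\ref{main} to guarantee $\pi_{t-}\ll\hat{\pi}$ for every predictor appearing in the supremum. Your observation about the factor $2$ from Theorem~\ref{curtis_result} being absorbed into the constant is a harmless bookkeeping point that the paper glosses over.
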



\begin{remark} We note that Theorem \ref{mainThmNIPS} applies to all finite state, measurement, action models as long as $\alpha=(1-\tilde{\delta}(\mathcal{T}))(2-\delta(Q))<1$ and $\beta$ satisfies the condition noted. Example \ref{doub_exmp} shows how to calculate the Dobrushin coefficient for transition matrices in finite setups. All other conditions apply since all probability measures on a finite/countable set are majorized by a probability measure which places a positive mass on every single point. Notice that the third condition only requires that the difference in the cost is bounded for every fixed control action. Condition 1 and 5 of Assumption \ref{main} coincide in the finite case. 
\end{remark}

\begin{remark}
We note that the error bound of the result is independent of the chosen $\hat{\pi}$. As we will see in the following proof, the first upper bound is a result of Theorem \ref{finite_MDP} which indeed depends on the $\hat{\pi}$ chosen by the user. However, thanks to Theorem \ref{curtis_result}, we can get a further upper bound on the error which is uniform over any $\hat{\pi}$ as long as $\hat{\pi}$ is a dominating measure.

One can interpret the absolute continuity assumption, that is $\pi\ll\hat{\pi}$, as follows: assume that the starting distribution of the process is $\pi$ but we start the update from the fixed prior $\hat{\pi}$. The information, $y_{[0,t]},u_{[0,t-1]}$ can eventually fix the starting error uniformly for all such $\hat{\pi}$, as long as, the fixed starting distribution $\hat{\pi}$, puts on a positive measure to every event that the real starting distribution $\pi$ puts on a positive measure. However, if it is not the case, that is, if the incorrect starting distribution $\hat{\pi}$ puts $0$ measure to some event, that $\pi$ puts positive measure to, information variables are not sufficient to fix the starting error occurring from that $0$ measure event. Of course this would not be feasible as the prior would not be compatible with the measured data. In any case, in our setup, the fixed prior $\hat{\pi}$ serves as an approximation and this can be made to satisfy the absolute continuity condition by design.
\end{remark}



\begin{proof}
When we reduce a partially observed MDP to a fully observed process, the initial state of the belief process becomes the Bayesian update of the prior distribution of the state process of the POMDP. Hence, we can write that 
\begin{align*}
&E_\mu^{\gamma}\left[J_\beta(\pi_{N_-},\mathcal{T},\gamma^*_N)-J_\beta(\pi_{N_-},\mathcal{T},\gamma^*)|Y_{[0,N]},U_{[0,N-1]}\right]\\
&=E_\mu^{\gamma}\left[J_\beta(\pi_N,\eta,\gamma_N^*)-J_\beta(\pi_N,\eta,\gamma^*)|Y_{[0,N]},U_{[0,N-1]}\right].
\end{align*}
Notice that, using Theorem \ref{belief_kernel_regularity}, Condition 2 of Assumption \ref{main} implies that
\begin{align*}
\rho_{BL}\left(\eta(\cdot|z,u),\eta(\cdot|z',u)\right)\leq 3(1+\alpha_{\mathds{X}})\rho_{BL}(z,z')
\end{align*}
and we also have that
\begin{align*}
 |\tilde{c}(z,u)-\tilde{c}(z',u)|=\left|\int c(x,u)z(dx)-\int c(x,u)z'(dx)\right|\leq (\alpha_c+\|c\|_\infty)\rho_{BL}(z,z').
\end{align*}
Thus, we can use Theorems \ref{belief_kernel_regularity} and \ref{finite_MDP} to write
\begin{eqnarray} \label{filterSEqnSum}
&&E_\mu^{\gamma}\left[J_\beta(\pi_N,\eta,\gamma_N^*)-J_\beta(\pi_N,\eta,\gamma^*)|Y_{[0,N]},U_{[0,N-1]}\right] \nonumber \\
&&\leq  K(\beta,\alpha_{\mathds{X}},\alpha_{c},\|c\|_\infty) \sup_{\pi\in\P(\mathds{X})}\sup_{\gamma\in\Gamma}E_\pi^{\gamma}\left[\rho_{BL}\left(P^{\pi}(X_N\in\cdot|Y_{[0,N]}),P^{\hat{\pi}}(X_N\in\cdot|Y_{[0,N]})\right)\right]. 
\end{eqnarray}

We set $\hat{\pi}$ in Assumption \ref{main} as our representative probability measure for the quantization of the belief space. Notice that by our choice of $\hat{\pi}$ is a dominating measure and therefore $\pi_{t-}\ll \hat{\pi}$ for any time step $t$, where $\pi_{t-}$ is the predictor at time $t$ . Thus, Theorem \ref{curtis_result} yields that under Assumption \ref{main} 
\begin{align*}
 E_\pi^{\gamma}\left[\|P^{\pi}(\cdot|Y_{[0,N]})-P^{\hat{\pi}}(\cdot|Y_{[0,N]})\|_{TV}\right]\leq \alpha^N,
\end{align*}
for any predictor $\pi$.
We also have by definition that
\begin{align*}
\rho_{BL}\left(P^{\pi,\gamma}(\cdot|Y_{[0,N]}),P^{\hat{\pi},\gamma}(\cdot|Y_{[0,N]})\right)\leq \|P^{\pi,\gamma}(\cdot|Y_{[0,N]})-P^{\hat{\pi},\gamma}(\cdot|Y_{[0,N]})\|_{TV}.
\end{align*}
Hence the result follows.
\end{proof}

\begin{remark}
By studying (\ref{filterSEqnSum}) and utilizing Theorem \ref{belief_kernel_regularity}(i)-(iii), we can arrive at complementary results when we only have filter stability but not exponential filter stability.
\end{remark}

We note that the initial $N$ time steps of the control problem should be treated separately as our approximation technique uses information variables with size $N$, but for the initial $N$ steps, there is not enough observation or control action variables to make use of the approximate finite window policies. In the following result, for the first $N$ time steps, we use a control policy that is found with a policy iteration type argument where the terminal cost estimated with $ \beta^NE[J_\beta(\gamma^*_N)]$ with $\gamma_N^*$ being the approximate finite window policy.

The following result is stated for the best possible policy in the set $\Gamma^N$ (see Equation \ref{new_info}), since $\inf_{\gamma^N\in\Gamma^N}J_\beta(\mu,\mathcal{T},\gamma^N)=J_\beta^N(\mu,\mathcal{T})$, however, in the proof, instead of working with the policy achieving this infimum, we construct a policy (possibly sub-optimal) which achieves the stated upper bound, and this upper bound naturally becomes an upper bound for the best possible finite window policy in $\Gamma^N$.

\begin{theorem}\label{mainThmNIPS2}
Under Assumption \ref{main}, if $\beta<\frac{1}{4\alpha_\mathcal{Z}+1}$ we have
\begin{align*}
&J_\beta^N(\mu,\mathcal{T})-J_\beta^*(\mu,\mathcal{T})\leq  K(\beta,\alpha_{\mathds{X}},\alpha_c,\|c\|_\infty)\alpha^N\beta^N
\end{align*}
where $K(\beta,\alpha_{\mathds{X}},\alpha_{c},\|c\|_\infty)$ is a constant that depends on $\beta,\alpha_{\mathds{X}},\alpha_{c}$ and $\|c\|_\infty$. (The exact formula for the constant can be found in the appendix). 
\end{theorem}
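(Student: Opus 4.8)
The plan is to exhibit a single policy in $\Gamma^N$ whose cost overshoots $J_\beta^*(\mu,\mathcal{T})$ by at most $K\alpha^N\beta^N$; since $J_\beta^N(\mu,\mathcal{T})$ is the infimum over $\Gamma^N$, this upper bound transfers directly. First I would pass to the belief-MDP formulation, where $J_\beta^*(\mu,\mathcal{T})=J_\beta^*(\mu,\eta)$ and the optimal belief policy $\gamma^*$ is stationary (the existence of such a stationary selector being guaranteed by the weak-continuity/measurable-selection discussion preceding the statement). Let $\gamma^{**}$ denote the induced POMDP-optimal policy, and construct the finite-memory policy $\hat\gamma\in\Gamma^N$ that, during the first $N$ stages (when $I_t^N=I_t$ and every admissible policy has access to the full history), acts optimally with terminal continuation cost taken to be $J_\beta(\,\cdot\,,\eta,\gamma_N^*)$, and that switches to the finite-window policy $\gamma_N^*$ from stage $N$ onward. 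This is exactly the policy-iteration-type construction flagged before the statement; because $\hat\gamma$ uses only $I_t^N$ for $t\geq N$, it is genuinely $N$-memory admissible, so $J_\beta^N(\mu,\mathcal{T})\leq J_\beta(\mu,\mathcal{T},\hat\gamma)$.

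Next I would use the $N$-stage dynamic-programming decomposition on the belief MDP. Writing $W(z):=J_\beta(z,\eta,\gamma_N^*)$ and $W^*(z):=J_\beta^*(z,\eta)=J_\beta(z,\eta,\gamma^*)$, both the optimal cost and the cost of $\hat\gamma$ split into an identical running part over $[0,N-1]$ and a $\beta^N$-discounted terminal part, differing only in whether the continuation value is $W^*$ or $W$:
\begin{align*}
J_\beta^*(\mu,\eta)&=\inf_{\gamma}E_\mu^{\gamma}\Big[\textstyle\sum_{t=0}^{N-1}\beta^t\tilde c(Z_t,U_t)+\beta^N W^*(Z_N)\Big],\\
J_\beta(\mu,\eta,\hat\gamma)&=\inf_{\gamma}E_\mu^{\gamma}\Big[\textstyle\sum_{t=0}^{N-1}\beta^t\tilde c(Z_t,U_t)+\beta^N W(Z_N)\Big],
\end{align*}
the infima being over first-$N$-stage policies, with $Z_N=\pi_N$ the filter at time $N$. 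Letting $\gamma^{**}$ be the minimizer of the first infimum and evaluating the second infimum at $\gamma^{**}$ (the standard performance-difference inequality $\inf_\gamma f_\gamma-\inf_\gamma g_\gamma\leq f_{\gamma^{**}}-g_{\gamma^{**}}$) makes the running parts cancel, leaving
\[
J_\beta(\mu,\eta,\hat\gamma)-J_\beta^*(\mu,\eta)\leq \beta^N\,E_\mu^{\gamma^{**}}\big[W(Z_N)-W^*(Z_N)\big]=\beta^N\,E_\mu^{\gamma^{**}}\big[J_\beta(\pi_N,\eta,\gamma_N^*)-J_\beta(\pi_N,\eta,\gamma^*)\big].
\]

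Finally I would invoke Theorem \ref{mainThmNIPS}, whose conditional bound holds uniformly over the pre-switch policy and is available since $\beta<\tfrac{1}{4\alpha_\mathcal{Z}+1}$; applying it with $\gamma=\gamma^{**}$ and taking the tower expectation over $Y_{[0,N]},U_{[0,N-1]}$ gives $E_\mu^{\gamma^{**}}[J_\beta(\pi_N,\eta,\gamma_N^*)-J_\beta(\pi_N,\eta,\gamma^*)]\leq K\alpha^N$, whence $J_\beta^N(\mu,\mathcal{T})-J_\beta^*(\mu,\mathcal{T})\leq K\alpha^N\beta^N$. I expect the main obstacle to be the bookkeeping that justifies the clean $N$-stage split: one must verify that $\hat\gamma$ is $N$-memory admissible on $[0,N-1]$ (it is, since $I_t^N=I_t$ there) and that the continuation value under $\gamma^{**}$ really equals the stationary optimal cost $W^*(\pi_N)$, which relies on the belief-MDP reduction and the stationary optimal belief policy established earlier. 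The extra factor $\beta^N$ relative to Theorem \ref{mainThmNIPS} is precisely the discounting of the terminal stage, and it is the uniformity in Theorem \ref{mainThmNIPS} over the first-$N$-stage policy that lets the argument close without any residual dependence on $\gamma^{**}$.
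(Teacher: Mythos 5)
Your proposal is correct and follows essentially the same route as the paper: your $\hat\gamma$ is the paper's $\gamma^{++}$, your performance-difference step (evaluating the second infimum at the optimal first-$N$-stage policy $\gamma^{**}$) is exactly the paper's comparison with the hypothetical policy $\gamma^{+}$ that runs $\gamma^*$ for the first $N$ stages and then switches, and the cancellation of the running costs followed by an application of Theorem \ref{mainThmNIPS} to the $\beta^N$-discounted terminal gap is identical to the paper's display (\ref{mid_step}) and its conclusion.
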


\begin{proof}
Recall the optimal policy for the finite model constructed in Section \ref{finite_state} is denoted by $\gamma_N^*$. Notice that $\gamma_N^*$ is a stationary policy and optimal for any initial point since it solves the discounted cost optimality equation. 

Now, we construct the following policy. Use the policy $\gamma_N^*$, after time $N$ unaltered, but modify the first $N$ time-stage policies, as a batch update, which can be solved via a finite dynamic programming algorithm.

\[\tilde{\gamma}_0,…,\tilde{\gamma}_{N-1} = \mathrm{argmin}_{\gamma_1,…,\gamma_{N-1}} E[\sum_{k=0}^{N-1} \beta^k c(x_k,u_k)] + \beta^k E[J_\beta(\gamma^*_N) | I_N]\]
where $I_N$ is the history by time $N$. We denote this policy by $\gamma^{++}:= \{\tilde{\gamma}_0,…,\tilde{\gamma}_{N-1}, \gamma^{*}_{N},\gamma^*_N,\dots\}$.

Note that we denote the true optimal policy for the original model by $\gamma^*$. We now define the policy $\gamma^+:= \{\gamma^*_{[0,N-1]},  \gamma^{*}_{N},\gamma^*_N,\dots \}$: Apply $\gamma^*$ until time $N$, and then use our finite window policy $\gamma^{*N}$. Note that this policy is not practical as it assumes that the controller already knows the true optimal policy $\gamma^*$, however, we will make use this hypothetical policy for the analysis.

From the way $\gamma^+$ is constructed, we clearly, $J_\beta(\gamma^{++}) \leq J_\beta(\gamma^+)$. And thus,

\[J_\beta(\gamma^{++}) - J_\beta(\gamma^*) \leq J_\beta(\gamma^+) - J_\beta(\gamma^*)\]

Furthermore, because of the way we constructed them, we have $\gamma^{++}\in\Gamma^N$. Hence, we can write 
\begin{align*}
J_\beta^N(\mu,\mathcal{T})-J_\beta^*(\mu,\mathcal{T})&=\inf_{\gamma\in\Gamma^N}J_\beta(\mu\,\mathcal{T},\gamma)-J_\beta(\mu,\mathcal{T},\gamma^*)\\
&\leq J_\beta(\mu,\mathcal{T},\gamma^{++})-J_\beta(\mu,\mathcal{T},\gamma^*)\\
&\leq J_\beta(\mu,\mathcal{T},\gamma^{+})-J_\beta(\mu,\mathcal{T},\gamma^*)
\end{align*}
Then, we start analyzing the error term: because $\gamma^+$ and $\gamma^*$ use the same policy before time $N$, we have
\begin{align}\label{mid_step}
&J_\beta(\mu,\mathcal{T},\gamma^+)-J_\beta(\mu,\mathcal{T},\gamma^*)\nonumber\\
&=\sum_{t=N}^\infty\beta^t E_\mu\left[c(X_t,\gamma_N^*(Y_{[t-N,t]},U_{[t-N,t-1]})\right]-\sum_{t=N}^\infty\beta^t E_\mu\left[c(X_t,\gamma^*(Y_{[0,t]},U_{[0,t-1]})\right]\nonumber\\
&=\beta^N\sum_{t=N}^\infty\beta^{t-N}E_\mu^{\gamma^*}\bigg[ E_\mu\left[c(X_t,\gamma_N^*(Y_{[t-N,t]},U_{[t-N,t-1]})\right]\nonumber\\
&\qquad\qquad\qquad\qquad\qquad- E_\mu\left[c(X_t,\gamma^*(Y_{[0,t]},U_{[0,t-1]})\right]|Y_{[0,N]},U_{[0,N-1]}\bigg]\nonumber\\
&=\beta^NE_\mu^{\gamma^*}\left[J_\beta(\pi_{N_-},\mathcal{T},\gamma^*_N)-J_\beta(\pi_{N_-},\mathcal{T},\gamma^*)|Y_{[0,N]},U_{[0,N-1]}\right]
\end{align}
The last step follows from the observation that conditioning on the observations and control actions $Y_{[0,N]},U_{[0,N-1]}$, the state process can be thought as if it starts at time $t=N$ whose prior measure is $\pi_{N_-}$ and from the fact that the probability measure of $Y_{[0,N]},U_{[0,N-1]}$ is determined by the initial measure $\mu$ and the policy $\gamma^*$ since $\gamma_t^N=\gamma_t^*$ for $t\leq N$. 

The result then follows from Theorem \ref{mainThmNIPS}.
\end{proof}

\subsection{Analysis via Uniform Bounds on Filter Approximation Error}\label{unif_quant}
The following result provides an alternative setup where our analysis is applicable when the filter stability error is presented in terms of a uniform bound on the filter approximation error under the total variation distance. We define this bound, arising from filter stability error, as follows:
\begin{align}\label{TVUnifB}
\bar{L}_{TV}:=\sup_{\pi\in\P(\mathds{X})}\sup_{\gamma\in\Gamma}\sup_{y_{[0,N]},u_{[0,N-1]}} \left\|P^{\pi}(\cdot|y_{[0,N]},u_{[0,N-1]})-P^{\hat{\pi}}(\cdot|y_{[0,N]},u_{[0,N-1]})\right\|_{TV}.
\end{align}




\begin{theorem}\label{unif_cont}
If $\beta <\frac{1}{\alpha_{\mathcal{Z}}}$, where $\alpha_{\mathcal{Z}}$ can be chosen as $(3-2\delta(Q))(1-\tilde{\delta}(\mathcal{T}))$ by Theorem \ref{belief_kernel_regularity}, we have that
\begin{itemize}
\item[(i)]\begin{align*}
\sup_z\left|J_\beta^N(z) - J^*_\beta(z) \right|\leq\frac{(\alpha_{\mathcal{Z}}-1)\beta+1}{(1-\beta)^2(1-\alpha_{\mathcal{Z}}\beta)}\|c\|_\infty\bar{L}_{TV}.
\end{align*}
\item[(ii)] \begin{align*}
\sup_z\left|J_\beta(z,\gamma_N)-J^*_\beta(z)\right|\leq \frac{2(1+(\alpha_{\mathcal{Z}}-1)\beta)}{(1-\beta)^3(1-\alpha_{\mathcal{Z}}\beta)}\|c\|_\infty \bar{L}_{TV}.
\end{align*}

\end{itemize}
\end{theorem}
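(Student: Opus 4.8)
The plan is to treat both parts as a value-function / policy approximation comparison between the original belief-MDP $(\mathcal{Z},\mathds{U},\tilde{c},\eta)$ and the finite model $(\mathcal{Z}_{\hat{\pi}}^N,\mathds{U},c^N,\eta^N)$, extending the finite value function to all of $\mathcal{Z}$ through the quantizer $F$ via $\bar{J}^N(z):=J_\beta^N(F(z))$. The first ingredient I would isolate is a \emph{uniform quantization bound}: for any admissible $z=P^{\pi}(X_N\in\cdot|y_{[0,N]},u_{[0,N-1]})$, the point of $\mathcal{Z}_{\hat{\pi}}^N$ carrying the same window realization is within $\bar{L}_{TV}$ in total variation, so since $F$ takes the $\rho_{BL}$-argmin and $\rho_{BL}\leq\|\cdot\|_{TV}$, one obtains $\rho_{BL}(z,F(z))\leq\bar{L}_{TV}$ uniformly in $z$. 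This is the only place where $\bar{L}_{TV}$ enters.

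Next I would establish a modulus-of-continuity bound for the true optimal value $J_\beta^*$. Using that $\eta$ is $\alpha_{\mathcal{Z}}$-Lipschitz in the sense of Assumption \ref{belief_reg}, with $\alpha_{\mathcal{Z}}=(3-2\delta(Q))(1-\tilde{\delta}(\mathcal{T}))$ supplied by Theorem \ref{belief_kernel_regularity}(iv), together with $\|\tilde{c}\|_\infty\leq\|c\|_\infty$ and the discounted optimality equation $J_\beta^*=TJ_\beta^*$, I would run value iteration $J_{k+1}=TJ_k$ and track the growth of the continuity constant: each application contributes the cost term, controlled by $\|c\|_\infty$, plus $\beta\alpha_{\mathcal{Z}}$ times the previous constant. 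Since the hypothesis $\beta<1/\alpha_{\mathcal{Z}}$ forces $\beta\alpha_{\mathcal{Z}}<1$, the resulting geometric series converges and yields a bound of order $\|c\|_\infty/(1-\alpha_{\mathcal{Z}}\beta)$.

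For part (i) I would then decompose, for arbitrary $z$ with $z_i:=F(z)$,
\[
|J_\beta^*(z)-\bar{J}^N(z)|\leq |J_\beta^*(z)-J_\beta^*(z_i)| + |J_\beta^*(z_i)-J_\beta^N(z_i)|,
\]
bounding the first term by the modulus estimate and $\rho_{BL}(z,z_i)\leq\bar{L}_{TV}$, and the second by subtracting the two Bellman equations at the grid point $z_i$ (where the costs agree exactly, $c^N(z_i,u)=\tilde{c}(z_i,u)$, and $\eta^N=F_\ast\eta$). Using $|\min_u a_u-\min_u b_u|\leq\max_u|a_u-b_u|$, the transition term becomes $\beta\int[J_\beta^*(z')-\bar{J}^N(z')]\,\eta(dz'|z_i,u)$; inserting $J_\beta^*(F(z'))$ splits this into a further quantization contribution and the supremum $D:=\sup_z|J_\beta^*(z)-\bar{J}^N(z)|$. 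Collecting terms gives a self-referential inequality of the type $D\leq (1+\beta)\,\mathrm{Lip}(J_\beta^*)\,\bar{L}_{TV}+\beta D$, and solving for $D$ produces the factor $1/(1-\beta)$; combined with the Lipschitz constant this is intended to reproduce the stated rational function $\frac{(\alpha_{\mathcal{Z}}-1)\beta+1}{(1-\beta)^2(1-\alpha_{\mathcal{Z}}\beta)}\|c\|_\infty\bar{L}_{TV}$, the precise numerator coming from accounting separately for the quantization error incurred at the current state and under the transition. Part (ii) then follows from the classical policy-loss lemma: since $\gamma_N$ is optimal for the finite model but is applied to the true dynamics, $\sup_z|J_\beta(z,\gamma_N)-J_\beta^*(z)|\leq \tfrac{2}{1-\beta}\,D$, which multiplies the part-(i) bound by $2/(1-\beta)$ and accounts exactly for the extra factor $(1-\beta)$ in the denominator and the factor $2$.

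The main obstacle I anticipate is the metric bookkeeping. The quantization error $\bar{L}_{TV}$ lives in total variation, the kernel regularity (iv) converts a total-variation input distance into a $\rho_{BL}$ output distance, and the value function must be shown continuous in a metric compatible with both so that the estimates chain. The subtle point is that there is in general \emph{no} contractive total-variation bound on $\eta$ (two belief updates from distinct priors can be mutually singular), which is precisely why one must route the integral-against-$J_\beta^*$ estimates through $\rho_{BL}$ while keeping the cost term controlled by $\|c\|_\infty$ alone. Pinning down the exact numerator $(\alpha_{\mathcal{Z}}-1)\beta+1$, rather than a looser $(1+\beta)$-type constant, requires careful separation of these two sources of error and is the most delicate part of the computation.
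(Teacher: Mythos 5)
Your overall architecture (Bellman fixed-point comparison at $z$ versus $F(z)$, the uniform bound $\|z-F(z)\|\leq\bar{L}_{TV}$, a Lipschitz estimate for $J^*_\beta$ obtained by value iteration under $\beta\alpha_{\mathcal{Z}}<1$, and a self-referential inequality solved for $D$) is exactly the paper's, and Lemma \ref{lip_val}(ii) is precisely your "modulus of continuity" step. But there are two concrete gaps. First, in part (i) your decomposition $|J^*_\beta(z)-J^*_\beta(z_i)|+|J^*_\beta(z_i)-J^N_\beta(z_i)|$ routes the one-stage cost mismatch through $\mathrm{Lip}(J^*_\beta)=\frac{2-\beta}{(1-\beta)(1-\alpha_{\mathcal{Z}}\beta)}\|c\|_\infty$, which yields $D\leq\frac{(2-\beta)}{(1-\beta)^2(1-\alpha_{\mathcal{Z}}\beta)}\|c\|_\infty\bar{L}_{TV}$ (and your own inequality $D\leq(1+\beta)\mathrm{Lip}(J^*_\beta)\bar{L}_{TV}+\beta D$ gives something still larger). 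Since $\beta<1/\alpha_{\mathcal{Z}}$ forces $1+(\alpha_{\mathcal{Z}}-1)\beta<2-\beta$, this is strictly weaker than the stated bound. The paper instead subtracts the two Bellman equations term by term and treats the three error sources separately: the cost difference is bounded by $\|c\|_\infty\|z-F(z)\|_{TV}$ alone (no Lipschitz constant of $J^*_\beta$), the value mismatch under $\eta(\cdot|F(z),u)$ contributes $\beta D$, and only the kernel difference contributes $\beta\|J^*_\beta\|_{BL}\alpha_{\mathcal{Z}}\bar{L}_{TV}$; the algebra $(1-\beta)(1-\alpha_{\mathcal{Z}}\beta)+\beta\alpha_{\mathcal{Z}}(2-\beta)=1+(\alpha_{\mathcal{Z}}-1)\beta$ then produces the exact numerator. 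Relatedly, you claim $\rho_{BL}(z,F(z))\leq\bar{L}_{TV}$ is "the only place $\bar{L}_{TV}$ enters," but $|\tilde{c}(z,u)-\tilde{c}(F(z),u)|$ can only be bounded by $\|c\|_\infty\cdot\|z-F(z)\|_{TV}$, not by $\|c\|_\infty\cdot\rho_{BL}(z,F(z))$; since the theorem's constant involves $\|c\|_\infty$ only (no Lipschitz constant for $c$), you must carry the quantization error in total variation, which is what the paper does.

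Second, part (ii) does not follow from the classical policy-loss lemma with factor $2/(1-\beta)$. That lemma applies to a policy that is greedy with respect to an approximate value function \emph{under the true model}; here $\gamma_N$ is greedy with respect to $J^N_\beta$ under the perturbed cost $\tilde{c}(F(\cdot),u)$ and perturbed kernel $\eta(\cdot|F(\cdot),u)$, so the model mismatch re-enters and must be re-expanded. The paper handles this by a second fixed-point comparison for the fixed policy $\gamma_N$, bounding $\sup_z|J_\beta(z,\gamma_N)-J^N_\beta(z)|\leq\frac{(1+\beta)((\alpha_{\mathcal{Z}}-1)\beta+1)}{(1-\beta)^3(1-\alpha_{\mathcal{Z}}\beta)}\|c\|_\infty\bar{L}_{TV}$ and adding part (i); that the sum happens to equal $\frac{2}{1-\beta}$ times the part-(i) bound is an artifact of the final arithmetic, not a consequence of the lemma you invoke. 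You correctly anticipate where the difficulty lies, but as written the proposal proves a weaker version of (i) and leaves (ii) without a valid argument.
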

Before the proof, we note that the bound applies for all $\beta < 1$, if $(3 - 2 \delta(Q)) (1-\tilde{\delta}(\mathcal{T}))< 1$. Furthermore, since $(3 - 2 \delta(Q)) (1-\tilde{\delta}(\mathcal{T})) = 2 \alpha - (1-\tilde{\delta}(\mathcal{T}))$ where $\alpha=(2-\delta(Q))(1-\tilde{\delta}(\mathcal{T}))$ and, therefore, if $\alpha< 1$ we have that for all $\beta < 1/(1+\tilde{\delta}(\mathcal{T}))$ the result applies independent of $\delta(Q)$, though this is a conservative bound. If $\delta(Q) =1$, the bound applies for all $\beta < 1$ as long as $\alpha < 1$.
\begin{proof}
(i) We start by writing the fixed point equations
\begin{align*}
&J_\beta^*(z)=\min_u\left(\tilde{c}(z,u)+\beta\int J_\beta^*(z_1)\eta(dz_1|z,u)\right)\\
&J^N_\beta(z)=\min_u\left(\tilde{c}(F(z),u)+\beta\int J_\beta^N(z_1)\eta(dz_1|F(z),u)\right).
\end{align*}
Hence we can write that
\begin{align}\label{boundUnif}
\sup_z&\left|J^*_\beta(z)-J_\beta^N(z)\right|\leq \sup_u|\tilde{c}(z,u)-\tilde{c}(F(z),u)| \nonumber \\
&\qquad+\beta\sup_u\left|\int J_\beta^*(z_1)\eta(dz_1|z,u)-\int J_\beta^N(z_1)\eta(dz_1|F(z),u)\right| \nonumber \\
&\leq \|c\|_\infty \|z-F(z)\|_{TV}+\beta \sup_u \int\left|J_\beta^*(z_1)-J_\beta^N(z_1)\right|\eta(dz_1|F(z),u) \nonumber \\
&\qquad+\beta \sup_u\left|\int J_\beta^*(z_1)\eta(dz_1|z,u)-\int J_\beta^*(z_1)\eta(dz_1|F(z),u)\right| \nonumber \\
&\leq \|c\|_\infty  \|z-F(z)\|_{TV} +\beta \sup_z\left|J^*_\beta(z)-J_\beta^N(z)\right|+\beta \|J_\beta^*\|_{BL}\alpha_{\mathcal{Z}} \|z-F(z)\|_{TV}.
\end{align}
Note that by Theorem \ref{belief_kernel_regularity}, $\alpha_{\mathcal{Z}}\leq (3-2\delta(Q))(1-\tilde{\delta}(\mathcal{T}))$ when $\mathcal{Z}$ is associated with total variation distance. We also have that $\|J_\beta^*\|_{BL}\leq\frac{2-\beta}{(1-\beta)(1-\alpha_{\mathcal{Z}}\beta)}\|c\|_\infty$ when $\mathcal{Z}$ metrized by total variation distance (see Lemma \ref{lip_val}). Hence, by noting $ \|z-F(z)\|_{TV}\leq \bar{L}_{TV}$,  we can conclude that
\begin{align*}
\sup_z\left|J^*_\beta(z)-J_\beta^N(z)\right|\leq \frac{(\alpha_{\mathcal{Z}}-1)\beta+1}{(1-\beta)^2(1-\alpha_{\mathcal{Z}}\beta)}\|c\|_\infty\bar{L}_{TV}.
\end{align*}
%

(ii) We start by writing
\begin{align*}
\sup_z\left|J_\beta(z,\gamma_N)-J^*_\beta(z)\right|\leq \sup_z\left|J_\beta(z,\gamma_N)-J^N_\beta(z)\right|+\sup_z\left|J^N_\beta(z)-J^*_\beta(z)\right|
\end{align*}
where the second term is bounded by (i). We now focus on the first term:
\begin{align*}
 \left|J_\beta(z,\gamma_N)-J^N_\beta(z)\right|\leq &\sup_u\left|\tilde{c}(z,u)-\tilde{c}(F(z),u)\right|\\
&+\beta\left|\int J_\beta(z_1,\gamma_N)\eta(dz_1|z,\gamma_N(z))-\int J_\beta^N(z_1)\eta(dz_1|F(z),\gamma_N(z))\right|\\
&\leq \|c\|_\infty \left\|z-F(z)\right\|_{TV}+\beta\int \left|J_\beta(z_1,\gamma_N)-J_\beta^N(z_1)\right|\eta(dz_1|z,\gamma_N(z))\\
&\qquad+\beta\int \left|J_\beta^N(z_1)-J_\beta^*(z_1)\right|\eta(dz_1|z,\gamma_N(z))\\
&\qquad +\beta\left|\int J_\beta^*(z_1)\eta(dz_1|z,\gamma_N(z))-\int J_\beta^*(z_1)\eta(dz_1|F(z),\gamma_N(z))\right|\\
&\qquad+ \beta \int \left|J_\beta^*(z_1)-J_\beta^N(z_1)\right|\eta(dz_1|F(z),\gamma_N(z))\\
&\leq \|c\|_\infty \left\|z-F(z)\right\|_{TV}+ \beta \sup_z  \left|J_\beta(z,\gamma_N)-J^N_\beta(z)\right|\\
&+2\beta\sup_z\left|J^N_\beta(z)-J^*_\beta(z)\right|+\beta\|J_\beta^*\|_{BL}\alpha_{\mathcal{Z}}  \left\|z-F(z)\right\|_{TV}.
\end{align*}
Thus, using (i), and $\|J_\beta^*\|_{BL}\leq\frac{2-\beta}{(1-\beta)(1-\alpha_{\mathcal{Z}}\beta)}\|c\|_\infty$,  we can write
\begin{align*}
\sup_z \left|J_\beta(z,\gamma_N)-J^N_\beta(z)\right|\leq \frac{(1+\beta)((\alpha_{\mathcal{Z}}-1)\beta+1)}{(1-\beta)^3(1-\alpha_{\mathcal{Z}}\beta)}\|c\|_\infty \bar{L}_{TV}.
\end{align*} We then conclude that
\begin{align*}
\sup_z\left|J_\beta(z,\gamma_N)-J^*_\beta(z)\right|&\leq \sup_z\left|J_\beta(z,\gamma_N)-J^N_\beta(z)\right|+\sup_z\left|J^N_\beta(z)-J^*_\beta(z)\right|\\
&\leq\frac{(\alpha_{\mathcal{Z}}-1)\beta+1}{(1-\beta)^2(1-\alpha_{\mathcal{Z}}\beta)}\|c\|_\infty\bar{L}_{TV}.+  \frac{(1+\beta)((\alpha_{\mathcal{Z}}-1)\beta+1)}{(1-\beta)^3(1-\alpha_{\mathcal{Z}}\beta)}\|c\|_\infty \bar{L}_{TV}\\
&=  \frac{2(1+(\alpha_{\mathcal{Z}}-1)\beta)}{(1-\beta)^3(1-\alpha_{\mathcal{Z}}\beta)}\|c\|_\infty \bar{L}_{TV}.
\end{align*}
\end{proof}

\subsection{A Discussion on the Controlled Filter Stability Problem}
The above results suggest that the loss occurring from applying a finite window policy is mainly controlled by the term 
\begin{align}\label{filt_rmrk}
\sup_{\pi\in\P(\mathds{X})}\sup_{\gamma\in\Gamma} E_\pi^{\gamma}\left[\rho_{BL}\left(P^{\pi}(\cdot|Y_{[0,N]},U_{[0,N-1]}),P^{\hat{\pi}}(\cdot|Y_{[0,N]},U_{[0,N-1]})\right)\right],
\end{align} 
or its variations where the $BL$ metric is replaced with total variation, or the expectation is replaced with a uniform bound (given in (\ref{TVUnifB})). All these terms describe how fast two different belief-state processes forget their initial priors when fed with the same observations/control actions under a true distribution. Thus, any bound for this term directly applies to the main results we presented for the loss caused by a finite window policy. This term is related to the filter stability problem and our approximation results point out the close relation between filter stability and the performance of finite window policies. In a way, the main result or message of this paper is perhaps to explicitly relate finite window approximations for POMDPs to the filter stability problem.

To bound the term (\ref{filt_rmrk}), we use Theorem \ref{curtis_result}, to achieve an exponential convergence rate in the window size for a controlled setup. However, we should note that, Theorem \ref{curtis_result} provides only a sufficient condition to bound the filter stability term geometrically fast in the total variation distance. This result can be seen as too strong, if one is only interested in making this $\rho_{BL}$ distance (\ref{filt_rmrk}) smaller with increasing window size. In fact, as we will see in Section \ref{num_study}, even when the assumptions of Theorem \ref{curtis_result} are not satisfied, the filter stability term still converges to $0$. In the literature, there are various set of assumptions to achieve filter stability. Two main approaches have been:
\begin{itemize}
\item The transition kernel is in some sense sufficiently ergodic, forgetting the initial
measure and therefore passing this insensitivity (to incorrect initializations)
on to the filter process. This condition is often tailored towards control-free models.
\item The measurement channel provides sufficient information about the underlying state, allowing the filter to track the true state process. This approach is typically based on martingale methods and accordingly does not often lead to rates of convergence for the filter stability problem, but only asymptotic filter stability.
\end{itemize}
The result we use in this paper (Theorem \ref{curtis_result}) provides exponential filter stability, using a joint contraction property of the Bayesian filter update and measurement update steps through the Dobrushin coefficient. When these requirements are not satisfied, the filter stability can be checked via different assumptions from the literature.  However, we also note that, for the controlled setup, filter stability results are limited compared to the control free setup. A comprehensive review on filter stability in the control-free case, is available by \citet{chigansky2009intrinsic}. In the controlled case, recent additional studies include \cite{MYCDC2019observability,MYRobustControlledFS} where martingale methods are used to arrive at controlled filter stability, which in turn can lead to weaker conditions (though without rates of convergence) for near optimality of finite window policies.


With regard to Theorem \ref{curtis_result}, the following example studies an additive Gaussian model (not necessarily linear) and provides different parameters for the condition $(1-\delta(\mathcal{T}))\times(2-\delta(\hat{Q}))<1$ to hold.
\begin{example}
Consider a system where $\mathds{X}=\mathds{Y}=\mathds{R}$ and the transition and measurement kernels
are given by
\begin{align*}
x_{n+1}=f(x_n,u_n)+N(0,\sigma_t^2),\qquad y_n=g(x_n)+N(0,\sigma_q^2)
\end{align*}
where the functions $f$ and $g$ are measurable and bounded such that $f(x,u)\in[-t,t]$ and $g(x)\in [-q,q]$.

Note that, in our paper we assume and present our results from finite $\mathds{Y}$. Therefore, to make the example compatible with our results, we discretize the observation space $\mathds{Y}$. We provide two discretization schemes one with $\hat{\mathds{Y}}_1=\{-q,q\}$ and the other with $\hat{\mathds{Y}}_2=\{-q,0,q\}$. For discretization, we use a nearest neighbor mapping.

First, we study the observation space $\hat{\mathds{Y}}_1=\{-q,q\}$. Using the nearest neighbor mapping, we have that 
\begin{align*}
&\hat{y}_n=-q\quad  \text{ if } y_n=g(x_n)+N(0,\sigma_q^2)\leq 0,\\
&\hat{y}_n=q\quad  \text{ if } y_n=g(x_n)+N(0,\sigma_q^2)> 0
\end{align*}
We then can write the following for the transition kernel $\mathcal{T}(\cdot|x,u)\in\P(\mathds{X})$
\begin{align*}
\mathcal{T}(dx_{t+n}|x_n,u_n)\sim N(f(x_n,u_n),\sigma_t^2) 
\end{align*}
and for the compound channel $\hat{Q}(\cdot|x)\in\P(\hat{\mathds{Y}}_1)$:
\begin{align*}
\hat{Q}(q|x_n)=Pr(N(g(x_n),\sigma_q^2)>0),\qquad \hat{Q}(-q|x_n)=Pr(N(g(x_n),\sigma_q^2)\leq0).
\end{align*}
For these kernels, the Dobrushin coefficients can be calculated as 
\begin{align*}
\delta(\mathcal{T})\geq2Pr(N(t,\sigma_t^2)\leq 0),\qquad \delta(\hat{Q})=2Pr(N(q,\sigma_q^2)\leq 0).
\end{align*}
Notice that these probabilities are fully determined by the ratio of the mean and standard deviation of the Gaussian in question, $\sigma_t/t$ and $\sigma_q/q$ . The higher the ratio, the higher the Dobrushin coefficient. Below, we see a list of the ratio of the transition kernel and lowest possible
ratio of the measurement kernel such that $(1-\delta(\mathcal{T}))\times(2-\delta(\hat{Q}))<1$. If the ratio of $\sigma_q/q$ is
higher than the stated value, we will get exponential stability for the given transition
kernel. Note that if $\sigma_t/t>1.5$, then $\delta(\mathcal{T})>0.5$ which makes $(1-\delta(\mathcal{T}))\times(2-\delta(\hat{Q}))<1$ regardless of the channel for which we use 'any' in table to indicate that any channel would lead to exponential filter stability. 

\begin{table}[h!]
\begin{center}
\begin{tabular}{ |c|c|c|c|c|c|c|c|c|c|c|c|c|c| } 
 \hline
 $\frac{\sigma_t}{t}$ & 1.5 & 1.4 & 1.3 & 1.2 & 1.1 & 1.0 & 0.9 & 0.8 & 0.7 & 0.6 & 0.5 & 0.4 & 0.3 \\ 
\hline
$\frac{\sigma_q}{q}$ & any & 0.6 & 0.8 & 1.01 & 1.3 & 1.65 & 2.13 & 3.25 & 5.5 & 8.0 & 20.0 & 70.0 & 1000.0 \\ 
\hline
$\delta(\mathcal{T})$ & 0.50 & 0.48  & 0.44 & 0.40 & 0.36 & 0.32 & 0.27 & 0.21 & 0.15 & 0.10 & 0.05 & 0.01& 0.00 \\ 
 \hline
$\delta(\hat{Q})$ & any & 0.1 & 0.21 & 0.32 & 0.44 & 0.54 & 0.64 & 0.76 & 0.86 & 0.90 & 0.96 & 0.99 & 1.00\\ 
 \hline
\end{tabular}
\end{center}
\caption{Approximate minimum ratio of $\frac{\sigma_q}{q}$ for exponential filter stability for $\hat{\mathds{Y}}_1=\{-q,q\}$}
\label{table:1}
\end{table}

We now analyze the problem for the observation space $\hat{\mathds{Y}}_2=\{-q,0,q\}$. For this set, the nearest neighbor mapping yields that
\begin{align*}
&\hat{y}_n=-q\quad  \text{ if } y_n=g(x_n)+N(0,\sigma_q^2)\leq \frac{-q}{2},\\
&\hat{y}_n=q\quad  \text{ if } y_n=g(x_n)+N(0,\sigma_q^2)> \frac{q}{2},\\
&\hat{y}_n=0\quad  \text{ else} .
\end{align*}
For the compound channel, we then have
\begin{align*}
&\hat{Q}(q|x_n)=Pr(N(g(x_n),\sigma_q^2)>\frac{q}{2}),\qquad \hat{Q}(-q|x_n)=Pr(N(g(x_n),\sigma_q^2)\leq\frac{-q}{2}),\\
&\hat{Q}(0|x_n)=Pr(\frac{q}{2}\geq N(g(x_n),\sigma_q^2)>\frac{-q}{2}).
\end{align*}
For these kernels, the Dobrushin coefficients can be calculated as
\begin{align*}
&\delta(\mathcal{T})=2Pr(N(t,\sigma_t^2)\leq 0),\\ 
&\delta(\hat{Q})=2Pr(N(q,\sigma_q^2)\leq \frac{-q}{2})+Pr(\frac{-q}{2}<N(q,\sigma_q^2)<\frac{q}{2}).
\end{align*}
Below, we now see a list of the ratio of the transition kernel and lowest possible
ratio of the measurement kernel such that $(1-\delta(\mathcal{T}))\times(2-\delta(\hat{Q}))<1$ for the observation space $\hat{\mathds{Y}}_2=\{-q,0,q\}$. 

\begin{table}[h!]
\begin{center}
\begin{tabular}{ |c|c|c|c|c|c|c|c|c|c|c|c|c|c| } 
 \hline
 $\frac{\sigma_t}{t}$ & 1.5 & 1.4 & 1.3 & 1.2 & 1.1 & 1.0 & 0.9 & 0.8 & 0.7 & 0.6 & 0.5 & 0.4 & 0.3 \\ 
\hline
$\frac{\sigma_q}{q}$ & any & 0.39 & 0.6 & 0.85 & 1.2 & 1.54 & 2.1 & 3.2 & 5.9 & 8.0 & 20.0 & 80.0 & 1000.0 \\ 
\hline
$\delta(\mathcal{T})$ & 0.50 & 0.48  & 0.44 & 0.40 & 0.36 & 0.32 & 0.27 & 0.21 & 0.15 & 0.10 & 0.05 & 0.01& 0.00 \\ 
 \hline
$\delta(\hat{Q})$ & any & 0.1 & 0.21 & 0.32 & 0.44 & 0.54 & 0.64 & 0.76 & 0.86 & 0.90 & 0.96 & 0.99 & 1.00\\ 
 \hline
\end{tabular}
\end{center}
\caption{Approximate minimum ratio of $\frac{\sigma_q}{q}$ for exponential filter stability for $\hat{\mathds{Y}}_2=\{-q,0,q\}$}
\label{table:2}
\end{table}
\end{example}


\section{Numerical Study}\label{num_study}

In this section, we give an outline of the algorithm used to determine the approximate belief MDP and the finite window policy. We also present the performance of the finite window policies and the value function error for the approximate belief MDP in relation to the window size.

A summary of the algorithm is as follows:
\begin{itemize}
\item We determine a $\hat{\pi}\in\P(\mathds{X})$ such that it puts positive measure over the state space $\mathds{X}$. 
\item Following Section \ref{finite_state}, we construct the finite belief space $\mathcal{Z}_{\hat{\pi}}^N$ by taking the Bayesian update of $\hat{\pi}$ for all possible realizations of the form $\{y_0,\dots,y_N,u_0,\dots,u_{N-1}\}$. Hence we get a approximate finite belief space with size $|\mathds{Y}|^{N+1}\times|\mathds{U}|^N$.
\item We calculate all transitions from each $z\in \mathcal{Z}_{\hat{\pi}}^N$ by considering every possible observation $y$ and control action $u$ and we map them to $\mathcal{Z}_{\hat{\pi}}^N$ using a nearest neighbor map and construct the transition kernels $\eta^N$ for the finite model.
\item For the finite models obtained, through value or policy iteration, we calculate the value functions and optimal policies. 
\end{itemize}

The example we use is a machine repair problem. In this model, we have $\mathds{X,Y,U}=\{0,1\}$ with
\begin{align*}
x_t=&\begin{cases}1 \quad \text{ machine is working at time t }\\
0  \quad \text{ machine is not working at time t }.\end{cases}
u_t=&\begin{cases}1 \quad \text{ machine is being repaired at time t }\\
0  \quad \text{ machine is not being repaired at time t }.\end{cases}
\end{align*}
The probability that the repair was successful given initially the machine was not working is given by $\kappa$:
\begin{align*}
Pr(x_{t+1}=1|x_t=0,u_t=1)=\kappa
\end{align*}
The probability that the machine breaks down while in a working state is given by $\theta$:
\begin{align*}
Pr(x_t=0|x_t=1,u_t=0)=\theta
\end{align*}
The probability that the channel gives an incorrect measurement is given by $\epsilon$:
\begin{align*}
Pr(y_t=1|x_t=0)=Pr(y_t=0|x_t=1)=\epsilon
\end{align*}
The one stage cost function is given by
\begin{align*}
c(x,u)=&\begin{cases}R+E \quad  &x=0,u=1 \\
E  \quad  &x=0, u=0 \\
0 \quad &x=1,u=0\\
R \quad &x=1, u=1\end{cases}
\end{align*}
where $R$ is the cost of repair and 
$E$ is the cost incurred by a broken machine.

We study the example with discount factor $\beta=0.8$,  and $R=5, E=1$ and present three different results by changing the other parameters. For all different cases, we choose $\hat{\pi}(\cdot)=0.1\delta_{0}(\cdot)+0.9\delta_{1}(\cdot)$.

For the first case, we take $\epsilon=0.3$, $\kappa=0.2$, $\theta=0.1$. We analyze the performance for $N\in\{0,\dots,5\}$. To get a proper finite window policy for all $N$'s we use, we let the system run $5$ time steps under the policy $\gamma_0(I_t)=0$ that is $u_t=0$ no matter what for the first $5$ time steps. Then, we start applying the policy and start calculating the cost. In Figure \ref{figure_1}, we plot the approximate value functions and the cost incurred by the finite window policies, that is we plot the terms $E^{\gamma_0}[|J_\beta^N(Z_5)|Y_{[0,5]},U_{[0,4]}]$ and $E^{\gamma_0}[J_\beta(Z_5,\gamma^N)|Y_{[0,5]},U_{[0,4]}]$.
\begin{figure}[h]
\centering
\includegraphics[scale=0.5]{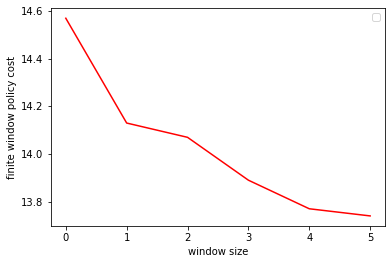}\includegraphics[scale=0.5]{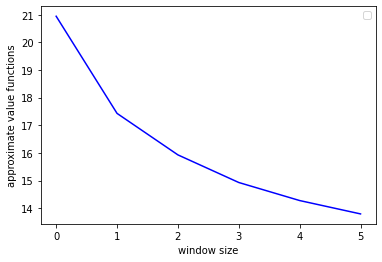}
\caption{Approximate value function and the performance of finite window policy for different window sizes}
\label{figure_1}
\end{figure}


Recall that our main result Theorem \ref{mainThmNIPS}, emphasizes the relation between the filter stability term and 'value error' and 'robustness error', both of which are guaranteed to converge to zero with increasing window length, where
\begin{align*}
&\text{Filter stability term: }\sup_\pi\sup_\gamma E_\pi^{\gamma}\left[\rho_{BL}\left(P^{\pi}(\cdot|Y_{[0,N]}),P^{\hat{\pi}}(\cdot|Y_{[0,N]})\right)\right]\\
&\text{Value error: }E^{\gamma_0}[|J_\beta^N(Z_5)-J_\beta^*(Z_5)|Y_{[0,5]},U_{[0,4]}]\\
&\text{Robustness error: }E^{\gamma_0}[|J_\beta(Z_5,\gamma^N)-J_\beta^*(Z_5)|Y_{[0,5]},U_{[0,4]}]
\end{align*} 
To get the errors, we simply subtract the cost values from their minimum (largest window) which serves as an approximation of the value function. Furthermore, we scale the errors according to the filter stability term to get a better understanding of the error rate in relation to the filter stability constant that is we make them start from the same value to see the decrease rates more clearly. Figure \ref{figure_2} shows the relation.
\begin{figure}[h]
\centering
\includegraphics[scale=0.6]{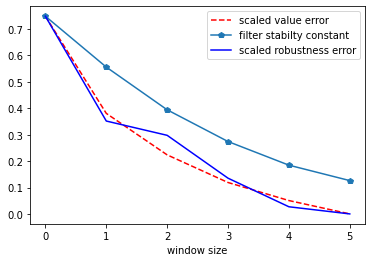}
\caption{Comparison of approximation error and filter stability term for different window sizes}
\label{figure_2}
\end{figure}

It can be seen from Figure \ref{figure_2}, that the error rate stays below the filter stability convergence rate and goes to $0$ as the window size increases.

For the second case, we take  $\epsilon=0.01$, $\kappa=0.3$, $\theta=0.1$. For this case, we directly plot the scaled errors in relation to the filter stability term in Figure \ref{figure_3}.
\begin{figure}[h]
\centering
\includegraphics[scale=0.6]{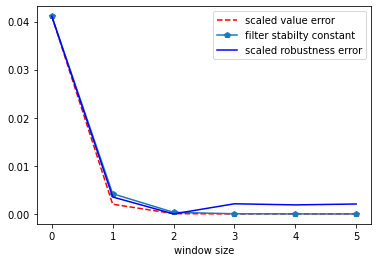}
\caption{Comparison of approximation error and filter stability term for different window sizes for an informative channel}
\label{figure_3}
\end{figure}

It can be seen that as the channel is more informative, the filter stability term gets smaller much faster and the recent information becomes more informative. As a result, we get a faster decrease in the error with the increasing window size.

One can notice that for the first two cases, the Dobrushin coefficient $\alpha$, we defined in Assumption \ref{main} is greater than $1$, however, the error terms still converge to $0$ with increasing window. This is because the filter stability term gets smaller even though the Dobrushin constant $\alpha>1$. For the last case, we select the parameters in a way to make $\alpha<1$.

We take  $\epsilon=0.3$, $\kappa=0.4$, $\theta=0.3$ so that $\alpha=0.7$. 
\begin{figure}[h]
\centering
\includegraphics[scale=0.6]{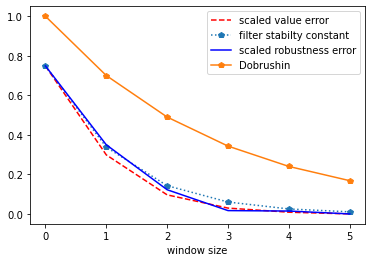}
\caption{Comparison of approximation error, filter stability term and Dobrushin term for different window sizes}
\label{figure_4}
\end{figure}
In Figure \ref{figure_4}, it can be observed that the error terms and the filter stability term converges to $0$ at similar rates. $\alpha^N$, however, gets smaller at a slower rate and upper bounds the convergence rate of error terms.

\section{Proofs of Main Technical Results}
In this section, we provide the technical proofs.

\subsection{Proof of Theorem \ref{belief_kernel_regularity} }\label{SecProofReg}


We will build on the proof by \citet{KSYWeakFellerSysCont}.
 Let $\mathds{X}$ be a separable metric space.
Another metric that metrizes the weak topology on $\P(\mathds{X})$ is the following:
\begin{align}\label{new_weak_metric}
\rho(\mu,\nu) = \sum_{m=1}^{\infty} 2^{-(m+1)} \biggr| \int_{\mathds{X}} f_m(x) \mu(dx) - \int_{\mathds{X}} f_m(x) \nu(dx) \biggl|, 
\end{align}
where $\{f_m\}_{m\geq1}$ is an appropriate sequence of continuous and bounded functions such that $\|f_m\|_{\infty} \leq 1$ for all $m\geq1$ \citep[see][Theorem 6.6, p. 47]{Par67}.

We equip ${\cal P}(\mathds{X})$ with the metric $\rho$ to define bounded-Lipschitz norm $\|f\|_{BL}$ of any Borel measurable function $f:{\cal P}(\mathds{X})\rightarrow \mathds{R}$. With this metric, we can start the proof:
\begin{align}
&\sup_{\|f\|_{BL}\leq1}\bigg|\int_{{\cal P}(\mathds{X})} f(z_1)\eta(dz_1|z_0',u)- \int_{{\cal P}(\mathds{X})} f(z_1)\eta(dz_1|z_0,u)\bigg|\nonumber\\
&=\sup_{\|f\|_{BL}\leq1}\bigg|\int_{\mathds{Y}} f(z_1(z_0',u,y_1))P(dy_1|z_0',u) - \int_{\mathds{Y}} f(z_1(z_0,u,y_1))P(dy_1|z_0,u)\bigg|\nonumber\\
&\leq\sup_{\|f\|_{BL}\leq1} \bigg| \int_{\mathds{Y}} f(z_1(z_0',u,y_1))P(dy_1|z_0',u) - \int_{\mathds{Y}} f(z_1(z_0',u,y_1))P(dy_1|z_0,u) \bigg|\nonumber\\
&\qquad+ \sup_{\|f\|_{BL}\leq1} \int_{\mathds{Y}} \big| f(z_1(z_0',u,y_1)) -f(z_1(z_0,u,y_1)) \big| P(dy_1|z_0,u)\nonumber\\
&\leq \|P(\cdot|z_0',u) - P(\cdot|z_0,u)\|_{TV}\label{first_term}\\
&\qquad+ \sup_{\|f\|_{BL}\leq1}\int_{\mathds{Y}} \big| f(z_1(z_0',u,y_1)) -f(z_1(z_0,u,y_1)) \big|P(dy_1|z_0,u)\label{second_term}
\end{align}
where, in the last inequality, we have used $\|f\|_{\infty} \leq \|f\|_{BL} \leq 1$. 

We first anayze the first term:
\begin{align*}
&\|P(\cdot|z_0',u) - P(\cdot|z_0,u)\|_{TV}=\sup_{\|f\|_\infty\leq 1}|\int f(y_1)P(dy_1|z_0',u) - \int f(y_1)P(dy_1|z_0,u)|\\
&=\sup_{\|f\|_\infty\leq 1} \bigg|\int_{\mathds{X}} \int_{\mathds{X}}\int_{\mathds{Y}} f(y_1)Q(dy_1|x_1)\mathcal{T}(dx_1|x_0,u)z_0'(dx_0) \\
&\phantom{xxxxxxxxxxxxx}- \int_{\mathds{X}} \int_{\mathds{X}}\int_{\mathds{Y}}f(y_1) Q(dy_1|x_1)\mathcal{T}(dx_1|x_0,u)z_0(dx_0) \bigg|.
\end{align*}
For all $x_0', x_0$ and for any $\|f\|_\infty\leq 1$, we have
\begin{align*}
&\left|\int_{\mathds{X}}\int_{\mathds{Y}} f(y_1)Q(dy_1|x_1)\mathcal{T}(dx_1|x_0',u)-\int_{\mathds{X}}\int_{\mathds{Y}} f(y_1)Q(dy_1|x_1)\mathcal{T}(dx_1|x_0,u)\right|\\
&\leq \|\mathcal{T}(\cdot|x_0',u)-\mathcal{T}(\cdot|x_0,u)\|_{TV}\leq \alpha_{\mathds{X}}|x_0'-x_0|.
\end{align*}
Then, we can write
\begin{align*}
&\|P(\cdot|z_0',u) - P(\cdot|z_0,u)\|_{TV} \leq (1+\alpha_{\mathds{X}})\rho_{BL}(z_0',z_0).
\end{align*}

Recall the metric introduced in (\ref{new_weak_metric}); using this metric, we now focus on the term (\ref{second_term}):
\begin{align*}
& \sup_{\|f\|_{BL}\leq1}\int_{\mathds{Y}} \big| f(z_1(z_0',u,y_1))-f(z_1(z_0,u,y_1)) \big| P(dy_1|z_0,u)\\
&\leq  \int_{\mathds{Y}} \rho(z_1(z_0',u,y_1),z_1(z_0,u,y_1))P(dy_1|z_0,u) \\
&= \int_{\mathds{Y}} \sum_{m=1}^{\infty} 2^{-m+1} \bigg| \int_{\mathds{X}} f_m(x_1) z_1(z_0',u,y_1)(dx_1) \nonumber\\
&\phantom{xxxxxxxxxxxxxxxxx}- \int_{\mathds{X}} f_m(x_1) z_1(z_0,u,y_1)(dx_1)\bigg|P(dy_1|z_0,u) \\
&=  \sum_{m=1}^{\infty} 2^{-m+1} \int_{\mathds{Y}} \bigg| \int_{\mathds{X}} f_m(x_1) z_1(z_0',u,y_1)(dx_1) \nonumber \\
&\phantom{xxxxxxxxxxxxxxxxx}- \int_{\mathds{X}} f_m(x_1) z_1(z_0,u,y_1)(dx_1)\bigg|P(dy_1|z_0,u),
\end{align*}
where we have used Fubini's theorem with the fact that $\sup_{m} \|f_m\|_{\infty} \leq 1$. For each $m$, let us define
\begin{align}\label{I_+}
&I_+:=\biggl\{y_1 \in \mathds{Y}: \int_{\mathds{X}} f_m(x_1) z_1(z_0',u,y_1)(dx_1) > \int_{\mathds{X}} f_m(x_1) z_1(z_0,u,y_1)(dx_1) \biggr\}\nonumber\\
&I_-:=\biggl\{y_1 \in \mathds{Y}: \int_{\mathds{X}} f_m(x_1) z_1(z_0',u,y_1)(dx_1) \leq \int_{\mathds{X}} f_m(x_1) z_1(z_0,u,y_1)(dx_1) \biggr\}.
\end{align}

Then, we can write
\begin{align*}
& \int_{\mathds{Y}} \bigg| \int_{\mathds{X}} f_m(x_1) z_1(z_0',u,y_1)(dx_1)  -\int_{\mathds{X}} f_m(x_1) z_1(z_0,u,y_1)(dx_1)\bigg|P(dy_1|z_0,u)\nonumber\\
& =\int_{I_+} \int_{\mathds{X}} f_m(x_1) z_1(z_0',u,y_1)(dx_1)P(dy_1|z_0,u) -\int_{I_+} \int_{\mathds{X}} f_m(x_1) z_1(z_0',u,y_1)(dx_1)P(dy_1|z_0',u)\nonumber\\
& +\int_{I_+} \int_{\mathds{X}} f_m(x_1) z_1(z_0',u,y_1)(dx_1)P(dy_1|z_0',u)- \int_{I_+} \int_{\mathds{X}} f_m(x_1) z_1(z_0,u,y_1)(dx_1)P(dy_1|z_0,u)\nonumber\\
&+\int_{I_-} \int_{\mathds{X}} f_m(x_1) z_1(z_0,u,y_1)(dx_1)P(dy_1|z_0,u)- \int_{I_-} \int_{\mathds{X}} f_m(x_1) z_1(z_0',u,y_1)(dx_1)P(dy_1|z_0',u)\nonumber\\
&+\int_{I_-} \int_{\mathds{X}} f_m(x_1) z_1(z_0',u,y_1)(dx_1)P(dy_1|z_0',u)- \int_{I_-} \int_{\mathds{X}} f_m(x_1) z_1(z_0',u,y_1)(dx_1)P(dy_1|z_0,u)
\end{align*}
For the first and the last term, we can write
\begin{align}
& \int_{I_+} \int_{\mathds{X}} f_m(x_1) z_1(z_0',u,y_1)(dx_1)P(dy_1|z_0,u) -\int_{I_+} \int_{\mathds{X}} f_m(x_1) z_1(z_0',u,y_1)(dx_1)P(dy_1|z_0',u)\nonumber\\
&+\int_{I_-} \int_{\mathds{X}} f_m(x_1) z_1(z_0',u,y_1)(dx_1)P(dy_1|z_0',u)- \int_{I_-} \int_{\mathds{X}} f_m(x_1) z_1(z_0',u,y_1)(dx_1)P(dy_1|z_0,u)\nonumber\\
&=\int_{\mathds{Y}}f_m(x_1)z_1(z_0',u,y_1)(dx_1)\left(\mathds{1}_{I_-}-\mathds{1}_{I_+}\right)P(dy_1|z_0',u)\nonumber\\
&\qquad-\int_{\mathds{Y}}f_m(x_1)z_1(z_0',u,y_1)(dx_1)\left(\mathds{1}_{I_-}-\mathds{1}_{I_+}\right)P(dy_1|z_0,u)\nonumber\\
&\leq \|P(\cdot|z_0',u)-P(\cdot|z_0,u)\|_{TV}\leq (1+\alpha_{\mathds{X}})\rho_{BL}(z_0',z_0).\label{bnd_1}
\end{align}

For the second and the third term:
\begin{align}
& \int_{I_+} \int_{\mathds{X}} f_m(x_1) z_1(z_0',u,y_1)(dx_1)P(dy_1|z_0',u)- \int_{I_+} \int_{\mathds{X}} f_m(x_1) z_1(z_0,u,y_1)(dx_1)P(dy_1|z_0,u)\nonumber\\
&+\int_{I_-} \int_{\mathds{X}} f_m(x_1) z_1(z_0,u,y_1)(dx_1)P(dy_1|z_0,u)- \int_{I_-} \int_{\mathds{X}} f_m(x_1) z_1(z_0',u,y_1)(dx_1)P(dy_1|z_0',u)\nonumber\\
&=\int_{I_+} \int_{\mathds{X}} f_m(x_1) Q(dy_1|x_1)\mathcal{T}(dx_1|z_0',u)-\int_{I_+} \int_{\mathds{X}} f_m(x_1) Q(dy_1|x_1)\mathcal{T}(dx_1|z_0,u)\nonumber\\
&\qquad+\int_{I_-} \int_{\mathds{X}} f_m(x_1)Q(dy_1|x_1)\mathcal{T}(dx_1|z_0,u)-\int_{I_-} \int_{\mathds{X}} f_m(x_1)Q(dy_1|x_1)\mathcal{T}(dx_1|z_0',u)\nonumber\\
&= \int_{\mathds{X}} f_m(x_1) \left(Q(I_+|x_1)-Q(I_-|x_1)\right)\mathcal{T}(dx_1|z_0',u)\nonumber\\
&\qquad- \int_{\mathds{X}} f_m(x_1) \left(Q(I_+|x_1)-Q(I_-|x_1)\right)\mathcal{T}(dx_1|z_0,u)\label{bnd_2}\\
&\leq (1+\alpha_{\mathds{X}})\rho_{BL}(z_0',z_0).\nonumber
\end{align}
Hence, we can write that
\begin{align*}
& \int_{\mathds{Y}} \bigg| \int_{\mathds{X}} f_m(x_1) z_1(z_0',u,y_1)(dx_1)  -\int_{\mathds{X}} f_m(x_1) z_1(z_0,u,y_1)(dx_1)\bigg|P(dy_1|z_0,u)\\
&\leq 2(1+\alpha_{\mathds{X}})\rho_{BL}(z_0',z_0)
\end{align*}

Now we go back to the term (\ref{second_term}):
\begin{align*}
& \sup_{\|f\|_{BL}\leq1}\int_{\mathds{Y}} \big| f(z_1(z_0',u,y_1))-f(z_1(z_0,u,y_1)) \big| P(dy_1|z_0,u)\\
&\leq  \int_{\mathds{Y}} \rho(z_1(z_0',u,y_1),z_1(z_0,u,y_1))P(dy_1|z_0,u) \\
&= \int_{\mathds{Y}} \sum_{m=1}^{\infty} 2^{-m+1} \bigg| \int_{\mathds{X}} f_m(x_1) z_1(z_0',u,y_1)(dx_1) - \int_{\mathds{X}} f_m(x_1) z_1(z_0,u,y_1)(dx_1)\bigg|P(dy_1|z_0,u) \\
&=  \sum_{m=1}^{\infty} 2^{-m+1} \int_{\mathds{Y}} \bigg| \int_{\mathds{X}} f_m(x_1) z_1(z_0',u,y_1)(dx_1) \nonumber - \int_{\mathds{X}} f_m(x_1) z_1(z_0,u,y_1)(dx_1)\bigg|P(dy_1|z_0,u)\\
&\leq 2(1+\alpha_{\mathds{X}})\rho_{BL}(z_0',z_0).
\end{align*}
Thus, combining all the results:
\begin{align*}
&\sup_{\|f\|_{BL}\leq1}\bigg|\int_{{\cal P}(\mathds{X})} f(z_1)\eta(dz_1|z_0',u)- \int_{{\cal P}(\mathds{X})} f(z_1)\eta(dz_1|z_0,u)\bigg|\nonumber\\
&\leq \|P(\cdot|z_0',u) - P(\cdot|z_0,u)\|_{TV}\\
&\qquad+ \sup_{\|f\|_{BL}\leq1}\int_{\mathds{Y}} \big| f(z_1(z_0',u,y_1)) -f(z_1(z_0,u,y_1)) \big|P(dy_1|z_0,u)\\
&\leq 3(1+\alpha_{\mathds{X}})\rho_{BL}(z_0',z_0).
\end{align*}

We now prove part (ii); we note first that if $\|P(\cdot|z_0',u) - P(\cdot|z_0,u)\|_{TV}\leq 2(1-\delta(Q))\rho_{BL}(z_0',z_0)$, the result follows. Hence, we write  \citep[by][]{dobrushin1956central}
\begin{align*}
&\|P(\cdot|z_0',u) - P(\cdot|z_0,u)\|_{TV}=\sup_{\|f\|_\infty\leq 1}|\int f(y_1)P(dy_1|z_0',u) - \int f(y_1)P(dy_1|z_0,u)|\\
&=\sup_{\|f\|_\infty\leq 1} \bigg|\int f(y_1)Q(dy_1|x_1)\mathcal{T}(dx_1|z_0',u) - \int f(y_1) Q(dy_1|x_1)\mathcal{T}(dx_1|z_0,u) \bigg|\\
&\leq (1-\delta(Q))\|\mathcal{T}(dx_1|z_0',u) -\mathcal{T}(dx_1|z_0,u) \|_{TV}\leq (1-\delta(Q))(1+\alpha_{\mathds{X}})\rho_{BL}(z_0',z_0).
\end{align*}

To prove part (iii) of the theorem, we start from the terms (\ref{first_term}) and (\ref{second_term}). For (\ref{first_term}), we have
\begin{align*}
&\|P(\cdot|z_0',u) - P(\cdot|z_0,u)\|_{TV}\\
&=\sup_{\|f\|_\infty\leq 1} \bigg|\int f(y_1)Q(dy_1|x_1)\mathcal{T}(dx_1|x_0,u)z_0'(dx_0) - \int f(y_1)Q(dy_1|x_1)\mathcal{T}(dx_1|x_0,u)z_0(dx_0) \bigg|\\
&\leq \sup_{\|f\|_\infty\leq 1}\left|\int f(x_0)z_0(dx_0)-\int f(x_0)z_0'(dx_0)\right|=\|z_0-z_0'\|_{TV}.
\end{align*}

For (\ref{second_term}), as before, we start by writing 
\begin{align*}
& \sup_{\|f\|_{BL}\leq1}\int_{\mathds{Y}} \big| f(z_1(z_0',u,y_1))-f(z_1(z_0,u,y_1)) \big| P(dy_1|z_0,u)\\
&\leq   \sum_{m=1}^{\infty} 2^{-m+1} \int_{\mathds{Y}} \bigg| \int_{\mathds{X}} f_m(x_1) z_1(z_0',u,y_1)(dx_1) \nonumber \\
&\phantom{xxxxxxxxxxxxxxxxx}- \int_{\mathds{X}} f_m(x_1) z_1(z_0,u,y_1)(dx_1)\bigg|P(dy_1|z_0,u).
\end{align*}
For each $m$, using the bounding terms (\ref{bnd_1}) and (\ref{bnd_2}), we have 
\begin{align*}
 &\int_{\mathds{Y}} \bigg| \int_{\mathds{X}} f_m(x_1) z_1(z_0',u,y_1)(dx_1) - \int_{\mathds{X}} f_m(x_1) z_1(z_0,u,y_1)(dx_1)\bigg|P(dy_1|z_0,u)\\
&\leq \|P(\cdot|z_0',u) - P(\cdot|z_0,u)\|_{TV}+\|\mathcal{T}(\cdot|z_0',u) - \mathcal{T}(\cdot|z_0,u)\|_{TV}\leq 2 \|z_0'-z_0\|_{TV}
\end{align*}
which completes the proof of part (iii).

For part (iv),  for (\ref{first_term}) we have \citep[by][]{dobrushin1956central}
\begin{align*}
&\|P(\cdot|z_0',u) - P(\cdot|z_0,u)\|_{TV}\\
&=\sup_{\|f\|_\infty\leq 1} \bigg|\int f(y_1)Q(dy_1|x_1)\mathcal{T}(dx_1|z_0',u) - \int f(y_1)Q(dy_1|x_1)\mathcal{T}(dx_1|z_0,u) \bigg|\\
&\leq (1-\delta(Q))\|\mathcal{T}(dx_1|z_0',u)-\mathcal{T}(dx_1|z_0,u)\|_{TV}\\
&= (1-\delta(Q)) \sup_{\|f\|_\infty\leq 1} \bigg|\int f(x_1)\mathcal{T}(dx_1|x_0,u)z_0'(dx_0)-\int f(x_1)\mathcal{T}(dx_1|x_0,u)z_0'(dx_0)\bigg|\\
&\leq (1-\delta(Q))(1-\tilde{\delta}(\mathcal{T}))\|z_0'-z_0\|_{TV}.
\end{align*}
For (\ref{second_term}),  \citep[using Lemma 3.2][]{mcdonald2020exponential} and the analysis above,
\begin{align*}
 &\int_{\mathds{Y}} \bigg| \int_{\mathds{X}} f_m(x_1) z_1(z_0',u,y_1)(dx_1) - \int_{\mathds{X}} f_m(x_1) z_1(z_0,u,y_1)(dx_1)\bigg|P(dy_1|z_0,u)\\
&\leq (2-\delta(Q))\|\mathcal{T}(\cdot|z_0',u)-\mathcal{T}(\cdot|z_0,u)\|_{TV}\leq (2-\delta(Q))(1-\tilde{\delta}(\mathcal{T}))\|z_0'-z_0\|_{TV}
\end{align*}which completes the proof.

\subsection{Proof of Theorem \ref{finite_MDP}}\label{proofThmMain}

Before presenting our proof program and the series of technical results needed, we introduce some notation.

We denote the loss function due to the quantization by $L(z)$, i.e. 
\begin{align*}
L(z)=\rho_{BL}(z,F(z))
\end{align*}
 with $F$ defined in Equation \ref{quant_map}.

In the following, to specify the probability measures according to which the expectations are taken, we use the following notation; for the kernel $\eta^N$ and a policy $\gamma$, we use $E_N^\gamma$ and for the kernel $\eta$ and a policy $\gamma$, we will use $E^\gamma$.

The last notation we introduce is the following: Recall that we denote the optimal cost for the finite model by $J^N_\beta$ and the optimal policy by $\gamma_N^*$. These are defined on a finite set $\mathcal{Z}_{\hat{\pi}}^N$, however, we can always extend them over the original state space $\mathcal{Z}$ so that they are constant over the quantization bins. We denote the extended versions by $\tilde{J}^N_\beta$ and $\tilde{\gamma}_N^*$.

Now, we introduce our value iteration approach for the original model and the finite model. We write for any $k<\infty$ 
\begin{align*}
v_{k+1}(z)&=\min_{u}\left(\tilde{c}(z,u)+\beta\int v_k(z_1)\eta(dz_1|z,u)\right)\quad \forall z\in \mathcal{Z},\\
v^N_{k+1}(z)&=\min_{u}\left(c^N(z,u)+\beta\sum_{z_1} v^N_k(z_1)\eta^N(z_1|z,u)\right)\quad \forall z\in \mathcal{Z}_{\hat{\pi}}^N.
\end{align*}
 where $v_0,v^N_0\equiv 0$. It is well known that the above operations define contractions under either model and hence the value functions converge to the optimal expected discounted cost. In particular, we have that
\begin{align*}
\big|J^N_\beta(z)-v^N_k(z)\big|\leq \|\tilde{c}\|_\infty\frac{\beta^k}{1-\beta},\quad \big|J^*_\beta(z)-v_k(z)\big|\leq \|\tilde{c}\|_\infty\frac{\beta^k}{1-\beta}.
\end{align*}
Notice that if we extend $v^N_{k+1}(z)$ and $c^N(z,u)$ over all of the state space then $\tilde{v}^N_{k+1}(z)$ and $\tilde{c}^N(z,u)$ becomes constant over the quantization bins. Then, the extended value functions for the finite model can be also obtained with
\begin{align}\label{extended_value_it}
\tilde{v}^N_{k+1}(z)&=\min_{u}\left(\tilde{c}(F(z),u)+\beta\int \tilde{v}^N_k(z_1)\eta(dz_1|F(z),u)\right)\quad \forall z\in \mathcal{Z}.
\end{align}
To see this, observe the following:
\begin{align*}
\tilde{v}^N_{k+1}(z)&=\min_{u}\left(c(F(z),u)+\beta\int \tilde{v}^N_k(z_1)\eta(dz_1|F(z),u)\right)\\
&=\min_{u}\left(\tilde{c}(F(z),u)+\beta\sum_{i=1}^{|\mathcal{Z}_{\hat{\pi}}^N|}\int_{Z^N_i} \tilde{v}^N_k(z_1)\eta(dz_1|F(z),u)\right)\\
&=\min_{u}\left(\tilde{c}(F(z),u)+\beta\sum_{i=1}^{|\mathcal{Z}_{\hat{\pi}}^N|}\tilde{v}^N_k(z_1^i)\int_{\mathcal{Z}^N_i} \eta(dy|F(z),u)\right)\\
&=\min_{u}\left(\tilde{c}(F(z),u)+\beta\sum_{i=1}^{|\mathcal{Z}_{\hat{\pi}}^N|}\tilde{v}^N_k(z_1^i) \eta(\mathcal{Z}^N_i|F(z),u)\right)
\end{align*}
where $\mathcal{Z}^n_i$ denotes the ith quantization bin and $\tilde{v}^N_k(z_1^i)$ denotes the value of $\tilde{v}^N_k$ at that quantization bin and it is constant over the bin.


For the proof the goal is to bound the term $|J_\beta(\tilde{\gamma}_N^*,z)-J_\beta(\gamma^*,z)|$. We will see in the following that bounding this term is related to studying the term $|\tilde{J}^N_\beta(\tilde{\gamma}_N^*,z)-J_\beta(\gamma^*,z)|$ (the value function approximation error). Therefore, in what follows, we first analyze $|\tilde{J}^N_\beta(\tilde{\gamma}_N^*,z)-J_\beta(\gamma^*,z)|$ and observe that it can be bounded using the expected loss terms $E[L(Z_t)]$. Finally, we will observe that upper bounds on these expressions can be written through the filter stability term 
$$\sup_{\pi\in\P(\mathds{X})}\sup_{\gamma\in\Gamma}E_{\pi}^{\gamma}\left[\rho_{BL}\left(P^{\pi}(X_N\in\cdot|Y_{[0,N]},\gamma(Y_{[0,N-1]})),P^{\hat{\pi}}(X_N\in\cdot|Y_{[0,N]}),\gamma(Y_{[0,N-1]})\right)\right].$$

Next, we present some supporting technical results.

\begin{lemma}\label{cont_bound}
Under Assumption \ref{belief_reg}, we have
\begin{align*}
&|\tilde{J}^N_\beta(\tilde{\gamma}_N^*,z)-J_\beta(\gamma^*,z)| \\
& \leq \lim_{k\to\infty}\sup_{\gamma\in\Gamma}\Bigg(\alpha_{\tilde{c}} \sum_{t=0}^{k-1}\beta^{t}\left( E^\gamma_{z}\left[L(Z_t)\right]+ 3 \alpha_{\mathcal{Z}} \sum_{m=0}^{t-1}(4\alpha_{\mathcal{Z}}+1)^mE^\gamma_{z}[L(Z_{t-m-1})]\right)\nonumber \\
&\qquad\qquad+  \alpha_{\mathcal{Z}} \sum_{t=0}^{k-1}\beta^{t+1}\|v_{k-t-1}\|_{BL}\left( E^\gamma_{z}\left[L(Z_t)\right]+ 3 \alpha_{\mathcal{Z}} \sum_{m=0}^{t-1}(4\alpha_{\mathcal{Z}}+1)^mE^\gamma_{z}[L(Z_{t-m-1})]\right)\Bigg).
\end{align*}
where $v_k$ denotes the value function at a time step $k$ that is produced by the value iteration with $v_0 \equiv 0$.
\end{lemma}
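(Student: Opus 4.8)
The plan is to run value iteration on both models and control the accumulated difference as $k\to\infty$. Since the Bellman operators for the original belief-MDP and for the extended finite model (\ref{extended_value_it}) are contractions with modulus $\beta$, the iterates $v_k$ and $\tilde{v}^N_k$ converge to $J_\beta(\gamma^*,\cdot)=J^*_\beta$ and to $\tilde{J}^N_\beta(\tilde{\gamma}_N^*,\cdot)$ respectively, so it suffices to bound $\Delta_k(z):=|v_k(z)-\tilde{v}^N_k(z)|$ and pass to the limit. I will work with the extended recursion, which has the same form as the original one but with every occurrence of $z$ in the cost and the kernel replaced by its quantized image $F(z)$.

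First I would derive a one-step recursion for $\Delta_k$. Subtracting the two value-iteration equations, using $|\min_u a_u-\min_u b_u|\le\sup_u|a_u-b_u|$, and splitting through the triangle inequality produces three contributions: a cost mismatch $|\tilde{c}(z,u)-\tilde{c}(F(z),u)|\le\alpha_{\tilde{c}}\rho_{BL}(z,F(z))=\alpha_{\tilde{c}}L(z)$; a kernel mismatch evaluated against $v_k$, which by $\rho_{BL}$-duality and Assumption \ref{belief_reg} is at most $\beta\alpha_{\mathcal{Z}}\|v_k\|_{BL}L(z)$; and the genuinely recursive term $\beta\int\Delta_k(z_1)\eta(dz_1|F(z),u)$. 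This gives $\Delta_{k+1}(z)\le(\alpha_{\tilde{c}}+\beta\alpha_{\mathcal{Z}}\|v_k\|_{BL})L(z)+\beta\sup_u\int\Delta_k(z_1)\eta(dz_1|F(z),u)$. Unrolling down to $\Delta_0\equiv0$ yields $\Delta_k(z)\le\sum_{t=0}^{k-1}\beta^t(\alpha_{\tilde{c}}+\beta\alpha_{\mathcal{Z}}\|v_{k-t-1}\|_{BL})\,G_t(z)$, where $G_t(z)$ is the $t$-fold iterate of the map $g\mapsto\int g\,d\eta(\cdot|F(\cdot),u)$ applied to $L$, i.e. the expectation of $L$ at stage $t$ along the trajectory that transitions from the quantized states.

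The crux is to replace this quantized-trajectory quantity $G_t(z)$ by the true-process quantity $B_t=E^\gamma_z[L(Z_t)]+3\alpha_{\mathcal{Z}}\sum_{m=0}^{t-1}(4\alpha_{\mathcal{Z}}+1)^mE^\gamma_z[L(Z_{t-m-1})]$, which I would do by induction on $t$. Two facts drive this. Because $F$ is the nearest-neighbour map onto $\mathcal{Z}_{\hat{\pi}}^N$, $L(z)=\rho_{BL}(z,F(z))$ equals the distance from $z$ to $\mathcal{Z}_{\hat{\pi}}^N$, hence is $1$-Lipschitz with $\|L\|_\infty\le2$, so $\|L\|_{BL}\le3$. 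And Assumption \ref{belief_reg} gives $\|T_ug\|_{BL}\le(1+\alpha_{\mathcal{Z}})\|g\|_{BL}$ for the true transition operator $(T_ug)(z):=\int g\,d\eta(\cdot|z,u)$, so any $s$-fold true composition applied to $L$ has BL-norm at most $3(1+\alpha_{\mathcal{Z}})^s$. In the inductive step I would write $G_{t+1}(z)=\int G_t(z_1)\,\eta(dz_1|F(z),u_0)$, apply the hypothesis to the inner $t$-step expectation to express it through true-process terms $E_{z_1}[L(Z_s)]$, and then remove the single outer quantized transition via $\int(T_{u_1}\cdots T_{u_s}L)\,d\eta(\cdot|F(z),u_0)=E_z[L(Z_{s+1})]+\big(\int(\cdots)\,d\eta(\cdot|F(z),u_0)-\int(\cdots)\,d\eta(\cdot|z,u_0)\big)$, bounding the bracket by $\alpha_{\mathcal{Z}}\|T_{u_1}\cdots T_{u_s}L\|_{BL}L(z)\le3\alpha_{\mathcal{Z}}(1+\alpha_{\mathcal{Z}})^sL(z)$ using $\rho_{BL}$-duality together with $\rho_{BL}(\eta(\cdot|F(z),u_0),\eta(\cdot|z,u_0))\le\alpha_{\mathcal{Z}}L(z)$. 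Collecting the corrections, the coefficient sums $S_t$ obey the telescoping identity $S_t-(4\alpha_{\mathcal{Z}}+1)S_{t-1}=b_t-(1+\alpha_{\mathcal{Z}})b_{t-1}\le0$ with $b_s:=3(1+\alpha_{\mathcal{Z}})^s$, giving $S_t\le3(4\alpha_{\mathcal{Z}}+1)^t$ and hence exactly the factor $(4\alpha_{\mathcal{Z}}+1)^m$ in $B_t$. Substituting $G_t\le B_t$ into the unrolled bound and taking $\sup_{\gamma\in\Gamma}$ (which absorbs the control sequence appearing in both the quantized operators and the true-process expectations) yields the statement.

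I expect the third paragraph to be the main obstacle. The essential difficulty is that $F$ is only measurable, not continuous, so one cannot commute it with the transition operators or control $\|g\circ F\|_{BL}$ directly; the quantized transitions must be peeled off one at a time from the outside, and at each peeling step the BL-norm of the remaining true-kernel composition grows by the factor $1+\alpha_{\mathcal{Z}}$, which is precisely what must be matched against the $(4\alpha_{\mathcal{Z}}+1)$ bookkeeping so that the corrections telescope. A secondary point, which also explains the hypothesis $\beta<1/(4\alpha_{\mathcal{Z}}+1)$ in Theorem \ref{finite_MDP}, is that summing $\beta^tB_t$ over $t$ converges only when $\beta(4\alpha_{\mathcal{Z}}+1)<1$; this is not needed within the present lemma, whose bound is a limit of finite sums, but it is what makes the downstream estimate finite.
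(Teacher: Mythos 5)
Your proposal is correct and follows the paper's proof almost exactly: the same value-iteration/contraction reduction, the same unrolled recursion bounding $|v_k(z)-\tilde v^N_k(z)|$ by $\sum_{t}\beta^t(\alpha_{\tilde c}+\beta\alpha_{\mathcal Z}\|v_{k-t-1}\|_{BL})$ times the quantized-trajectory expectation of $L$, and the same conversion of that expectation into true-trajectory terms with coefficients $3\alpha_{\mathcal Z}(4\alpha_{\mathcal Z}+1)^m$. The only (cosmetic) difference is in that last conversion: the paper bounds $E^N_{z,\gamma}[L(Z_t)]-E_{z,\gamma}[L(Z_t)]$ by $\|L\|_{BL}\,\rho_{BL}(P^{\gamma}_{z,t},P^{\gamma,N}_{z,t})$ and runs its induction on the $\rho_{BL}$-distance between the two marginal laws, whereas you peel the quantized transitions from the outside and track $\|T_{u_1}\cdots T_{u_s}L\|_{BL}\le 3(1+\alpha_{\mathcal Z})^s$; the two inductions telescope to the identical $(4\alpha_{\mathcal Z}+1)^m$ bookkeeping since $(1+\alpha_{\mathcal Z})+3\alpha_{\mathcal Z}=4\alpha_{\mathcal Z}+1$.
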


\begin{proof}

We follow a value iteration approach to write for any $k<\infty$ 
\begin{align*}
v_{k+1}(z)&=\min_{u}\left(\tilde{c}(z,u)+\beta\int v_k(y)\eta(dy|z,u)\right)\quad \forall z\in \mathcal{Z},\\
\tilde{v}^N_{k+1}(z)&=\min_{u}\left(\tilde{c}(F(z),u)+\beta\int \tilde{v}^N_k(y)\eta(dy|F(z),u)\right)\quad \forall z\in \mathcal{Z}.
\end{align*}
 where $v_0,\tilde{v}^N_0\equiv 0$.

We then write
\begin{align}\label{val_it}
|\tilde{J}_\beta^N(z)-J^*_\beta(z)|\leq |\tilde{J}^N_\beta(z)-\tilde{v}_k^N(z)|+|\tilde{v}_k^N(z)-v_k(z)|+|v_k(z)-J^*_\beta(z)|
\end{align}

For the first and the last term, using the fact that the dynamic programming operator is a contraction, we have that
\begin{align*}
\big|\tilde{J}^N_\beta(z)-\tilde{v}^N_k(z)\big|\leq \|\tilde{c}\|_\infty\frac{\beta^k}{1-\beta},\quad \big|J^*_\beta(z)-v_k(z)\big|\leq \|\tilde{c}\|_\infty\frac{\beta^k}{1-\beta}.
\end{align*}

Now, we focus on the second term $|\tilde{v}_k^N(z)-v_k(z)|$. 

\textbf{Step 1:}
We show in the Appendix, Section \ref{SecStep1App}, that for all $k\geq 1$,
\begin{align*}
& |\tilde{v}_k^N(z)-v_k(z)| \\
& \leq\sup_{\gamma\in\Gamma}\left(\alpha_{\tilde{c}} \sum_{t=0}^{k-1}\beta^{t}E^\gamma_{z,N}\left[L(Z_t)\right]+\sum_{t=0}^{k-1}\beta^{t+1}\|v_{k-t-1}\|_{BL}E^\gamma_{z,N}\left[L(Z_t)\right]\alpha_{\mathcal{Z}} \right)
\end{align*}
where $L(z)$ is the loss function due to the quantization, i.e. $L(z)=\rho_{BL}(z,\hat{z})$ where $\hat{z}$ is the representative state $z$ belongs to. 

\textbf{Step 2:}
We show in the Appendix, Section \ref{SecStep2App}, that
\begin{align*}
&E^\gamma_{z,N}\left[L(Z_t)\right]\leq E^\gamma_{z}\left[L(Z_t)\right]+ 3 \alpha_{\mathcal{Z}} \sum_{m=0}^{t-1}(4\alpha_{\mathcal{Z}}+1)^mE^\gamma_{z}[L(Z_{t-m-1})].
\end{align*}

\textbf{Step 3:}
Combining our results, we have that 
\begin{align}\label{value_bound}
&|\tilde{v}_k^N(z)-v_k(z)| \nonumber \\
& \leq\sup_{\gamma}\Bigg(\alpha_{\tilde{c}} \sum_{t=0}^{k-1}\beta^{t}E^\gamma_{z,N}\left[L(Z_t)\right]+\sum_{t=0}^{k-1}\beta^{t+1}\|v_{k-t-1}\|_{BL}E^\gamma_{z,N}\left[L(Z_t)\right]\alpha_{\mathcal{Z}}\Bigg)\nonumber\\
& \leq  \sup_{\gamma}\Bigg(\alpha_{\tilde{c}} \sum_{t=0}^{k-1}\beta^{t}\left( E^\gamma_{z}\left[L(Z_t)\right]+ 3 \alpha_{\mathcal{Z}} \sum_{m=0}^{t-1}(4\alpha_{\mathcal{Z}}+1)^mE^\gamma_{z}[L(Z_{t-m-1})]\right)\nonumber \\
&\qquad\qquad+  \alpha_{\mathcal{Z}} \sum_{t=0}^{k-1}\beta^{t+1}\|v_{k-t-1}\|_{BL}\left( E^\gamma_{z}\left[L(Z_t)\right]+ 3 \alpha_{\mathcal{Z}} \sum_{m=0}^{t-1}(4\alpha_{\mathcal{Z}}+1)^mE^\gamma_{z}[L(Z_{t-m-1})]\right)\Bigg).
\end{align}
Hence, using (\ref{val_it}) and (\ref{value_bound}), we write
\begin{align*}
&|\tilde{J}_\beta^N(z)-J^*_\beta(z)|\leq \lim_{k\to\infty}|\tilde{v}_k^N(z)-v_k(z)|\\
&\leq \lim_{k\to\infty}\sup_{\gamma}\Bigg(\alpha_{\tilde{c}} \sum_{t=0}^{k-1}\beta^{t}\left( E^\gamma_{z}\left[L(Z_t)\right]+ 3 \alpha_{\mathcal{Z}} \sum_{m=0}^{t-1}(4\alpha_{\mathcal{Z}}+1)^mE^\gamma_{z}[L(Z_{t-m-1})]\right)\nonumber \\
&\qquad\qquad+  \alpha_{\mathcal{Z}} \sum_{t=0}^{k-1}\beta^{t+1}\|v_{k-t-1}\|_{BL}\left( E^\gamma_{z}\left[L(Z_t)\right]+ 3 \alpha_{\mathcal{Z}} \sum_{m=0}^{t-1}(4\alpha_{\mathcal{Z}}+1)^mE^\gamma_{z}[L(Z_{t-m-1})]\right)\Bigg).
\end{align*}
\end{proof}

The next result, gives a bound on the loss function occurring from the quantization (on $L(z)$) and relates the bound to the filter stability problem.

\begin{lemma}\label{quant_bound}
For $Z_0=P^{\pi_0}(X_N\in\cdot|Y_{[0,N]},\gamma(Y_{[0,N-1]}))$, where $\pi_0\in\P(\mathds{X})$ is the prior distribution of $X_0$, we have that for any $N<t<\infty$
\begin{align*}
&\sup_{\gamma\in\Gamma}E_{\pi_0}^{\gamma}\left[E_{Z_0}^{\gamma}\left[L(Z_{t})\right]|Y_{[0,N]},\gamma\left(Y_{[0,N-1]}\right)\right]\\
&\leq\sup_{\pi\in\P(\mathds{X})}\sup_{\gamma\in\Gamma}E_{\pi}^{\gamma}\left[\rho_{BL}\left(P^{\pi}(X_N\in\cdot|Y_{[0,N]},\gamma(Y_{[0,N-1]})),P^{\hat{\pi}}(X_N\in\cdot|Y_{[0,N]}),\gamma(Y_{[0,N-1]})\right)\right].
\end{align*}

\end{lemma}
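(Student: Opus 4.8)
The plan is to exploit the nearest–neighbor definition of the quantizer $F$ together with the filtering (Markov) structure of the belief process. The decisive observation is that, since $F(z)=\argmin_{y\in\mathcal{Z}_{\hat\pi}^N}\rho_{BL}(z,y)$, we have $L(Z_t)=\rho_{BL}(Z_t,F(Z_t))=\min_{y\in\mathcal{Z}_{\hat\pi}^N}\rho_{BL}(Z_t,y)$, so \emph{any} particular element of $\mathcal{Z}_{\hat\pi}^N$ furnishes an upper bound on $L(Z_t)$. First I would therefore construct a convenient candidate element and estimate the resulting distance, and then pass to expectation using a disintegration over the predictor.

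First I would write $Z_t$ explicitly as an $N$-step Bayesian update. Since the inner chain starts from $Z_0=P^{\pi_0}(X_N\in\cdot|Y_{[0,N]},\gamma(Y_{[0,N-1]}))$ and since $t>N$, the belief $Z_t$ has absorbed observations and actions beyond the initial conditioning window, so it can be represented exactly as in Section~\ref{finite_state}: starting from the true one-step predictor $\pi_{t_-}$ ($N$ stages in the past) and updating with the most recent length-$N$ window $(Y,U)$. After relabeling that window to indices $[0,N]$ this reads $Z_t=P^{\pi_{t_-}}\big(X_N\in\cdot\,\big|\,Y_{[0,N]},U_{[0,N-1]}\big)$. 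The natural candidate in $\mathcal{Z}_{\hat\pi}^N$ is the element built from the \emph{same} realized window but started from the fixed prior $\hat\pi$, namely $\hat Z_t:=P^{\hat\pi}\big(X_N\in\cdot\,\big|\,Y_{[0,N]},U_{[0,N-1]}\big)\in\mathcal{Z}_{\hat\pi}^N$. By the minimizing property of $F$,
\[
L(Z_t)\le \rho_{BL}\big(Z_t,\hat Z_t\big)=\rho_{BL}\Big(P^{\pi_{t_-}}(X_N\in\cdot|Y_{[0,N]},U_{[0,N-1]}),\,P^{\hat\pi}(X_N\in\cdot|Y_{[0,N]},U_{[0,N-1]})\Big),
\]
which is precisely a length-$N$ filter-stability discrepancy between the priors $\pi_{t_-}$ and $\hat\pi$ fed identical information.

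Next I would take expectations and condition on the predictor $\pi_{t_-}$, which under the Markov/filtering structure is a sufficient statistic: given $\pi_{t_-}$, the subsequent window $Y_{[t,N+t]},U_{[t,N+t-1]}$ is generated exactly as a fresh POMDP with initial law $\pi_{t_-}$ under the operative policy $\gamma$, and because $t>N$ this window lies strictly beyond the conditioning window $Y_{[0,N]}$ of the outer expectation. Hence, by the tower property,
\[
E\big[L(Z_t)\,\big|\,\pi_{t_-}\big]\le E_{\pi_{t_-}}^{\gamma}\Big[\rho_{BL}\big(P^{\pi_{t_-}}(X_N\in\cdot|Y_{[0,N]},U_{[0,N-1]}),P^{\hat\pi}(X_N\in\cdot|Y_{[0,N]},U_{[0,N-1]})\big)\Big],
\]
and for every realization of $\pi_{t_-}$ and every $\gamma$ the right-hand side is dominated by the supremum over all $\pi\in\P(\mathds{X})$ and $\gamma\in\Gamma$ that defines the claimed bound. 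Since this estimate is uniform in the conditioning, taking the remaining outer expectation $E_{\pi_0}^{\gamma}[\,\cdot\,|\,Y_{[0,N]},\gamma(Y_{[0,N-1]})]$ and the supremum over $\gamma$ preserves it, yielding the stated inequality.

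The hard part will not be the nearest-neighbor bound, which is immediate, but the measure-theoretic bookkeeping of the middle step: one must verify rigorously that $Z_t$ admits the claimed length-$N$ update representation with the entire earlier history summarized by $\pi_{t_-}$, and that conditioning on $\pi_{t_-}$ turns the future window into a genuine fresh filtering problem with prior $\pi_{t_-}$. This is where sufficiency of the predictor and the consistency of Bayesian updating under the true law $P^{\pi_0}$ must be invoked with care; once this disintegration is justified, the supremum over priors simply absorbs the random value of $\pi_{t_-}$ and the bound follows.
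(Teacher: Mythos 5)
Your proposal is correct and follows essentially the same route as the paper: the nearest-neighbor property of $F$ bounds $L(Z_t)$ almost surely by the $\rho_{BL}$-distance to the particular element of $\mathcal{Z}_{\hat\pi}^N$ built from $\hat\pi$ and the same realized length-$N$ window, and then the law of total expectation (conditioning on the history up to the start of that window, equivalently on the predictor $\pi_{t_-}$) turns the remaining expectation into a fresh filter-stability term with prior $\pi_{t_-}$, which the supremum over $\pi$ and $\gamma$ absorbs. The paper conditions on $(U,Y)_{[0,t-1]}$ rather than on $\pi_{t_-}$ directly, but this is the same disintegration.
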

\begin{proof}
We first note that, since the quantizer is a nearest-neighbor quantization, the quantization error is almost surely upper bounded by \newline $\rho_{BL}\left(P^{\pi_{t_-}}(X_{N+t}\in\cdot|Y_{[t,t+N]},U_{[t,t+N-1]}),P^{\hat{\pi}}(X_{N+t}\in\cdot|Y_{[t,t+N]},U_{[t,t+N-1]})\right)$. Using this and the law of total expectation:
\begin{align*}
&E^{\gamma}_{\pi_0}\left[E_{z_0}^{\gamma}\left[L(Z_{t})\right]|Y_{[0,N]},\gamma(Y_{[0,N-1]})\right]\\
&\leq \sup_{\gamma\in\Gamma}E_{\pi_0}^\gamma\bigg[E_{\pi_{t_-}}^\gamma \bigg[\rho_{BL}\bigg(P^{\pi_{t_-}}(X_{N+t}\in\cdot|Y_{[t,t+N]},U_{[t,t+N-1]}) \\
& \qquad \qquad\qquad\qquad\qquad\qquad\qquad ,P^{\hat{\pi}}(X_{N+t}\in\cdot|Y_{[t,t+N]},U_{[t,t+N-1]})\bigg)\bigg] \bigg| {(U,Y)}_{[0,t-1]}\bigg]\nonumber\\
&\leq \sup_{\pi\in\P(\mathds{X})}\sup_{\gamma\in\Gamma}E_{\pi}^{\gamma}\left[\rho_{BL}\left(P^{\pi}(X_N\in\cdot|Y_{[0,N]},U_{[0,N-1]}),P^{\hat{\pi}}(X_N\in\cdot|Y_{[0,N]},U_{[0,N-1]})\right)\right]
\end{align*}
where ${(U,Y)}_{[0,t-1]}=U_{[0,t-1]},Y_{[0,t-1]}$.
\end{proof}

\begin{lemma}\label{val_bound}
Under Assumption \ref{belief_reg} we have that
\begin{align*}
\|v_k\|_{BL}\leq \alpha_{\tilde{c}} \sum_{t=0}^{k-1}(\beta\alpha_{\mathcal{Z}})^t + \|\tilde{c}\|_\infty\sum_{t=0}^{k-1}(\beta\alpha_{\mathcal{Z}})^t\frac{1-\beta^{k-t}}{1-\beta}.
\end{align*}
In particular, we have that for all $k$
\begin{align*}
\|v_k\|_{BL}\leq \frac{1}{1-\beta\alpha_{\mathcal{Z}}}\left(\frac{\|\tilde{c}\|_\infty}{1-\beta}+\alpha_{\tilde{c}}\right).
\end{align*}
\end{lemma}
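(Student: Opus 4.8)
The plan is to control the sup-norm and the Lipschitz seminorm of $v_k$ separately and then combine them, since the bounded-Lipschitz norm decomposes as $\|v_k\|_{BL}=\|v_k\|_\infty+\Lip(v_k)$, where $\Lip(v_k):=\sup_{z\neq z'}\frac{|v_k(z)-v_k(z')|}{\rho_{BL}(z,z')}$. Writing $a_k:=\|v_k\|_\infty$ and $b_k:=\Lip(v_k)$, both vanish at $k=0$ since $v_0\equiv 0$, and an easy induction shows every $v_k$ is bounded with finite $\|\cdot\|_{BL}$, so it is a legitimate test function below.

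First I would bound the sup-norm. From the value iteration, the elementary inequality $|\min_u A(u)-\min_u B(u)|\le \sup_u|A(u)-B(u)|$ together with the fact that $\eta(\cdot|z,u)$ is a probability measure yields $a_{k+1}\le \|\tilde c\|_\infty+\beta a_k$, hence by induction $a_k\le \|\tilde c\|_\infty\frac{1-\beta^k}{1-\beta}$. Next, for the Lipschitz seminorm I would apply the same min-over-$u$ inequality to $v_{k+1}(z)-v_{k+1}(z')$ and split into the cost term and the integral term, giving for any $u$
\[
|v_{k+1}(z)-v_{k+1}(z')|\le \alpha_{\tilde c}\,\rho_{BL}(z,z')+\beta\Big|\int v_k\,\eta(dz_1|z,u)-\int v_k\,\eta(dz_1|z',u)\Big|,
\]
where the first bound is the cost-regularity part of Assumption \ref{belief_reg}. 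For the integral term I would invoke the dual description of the bounded-Lipschitz metric on $\mathcal{P}(\mathcal{Z})$: testing against $v_k/\|v_k\|_{BL}$ shows $|\int v_k\,d\mu-\int v_k\,d\nu|\le \|v_k\|_{BL}\,\rho_{BL}(\mu,\nu)$, and the kernel-regularity part of Assumption \ref{belief_reg} gives $\rho_{BL}(\eta(\cdot|z,u),\eta(\cdot|z',u))\le \alpha_{\mathcal{Z}}\rho_{BL}(z,z')$. Taking the supremum over $z\neq z'$ produces the recursion $b_{k+1}\le \alpha_{\tilde c}+\beta\alpha_{\mathcal{Z}}(a_k+b_k)$.

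Finally I would solve this coupled recursion. Setting $\lambda:=\beta\alpha_{\mathcal{Z}}$ and unrolling $b_{k+1}=\alpha_{\tilde c}+\lambda a_k+\lambda b_k$ from $b_0=0$ gives $b_k=\alpha_{\tilde c}\sum_{j=0}^{k-1}\lambda^j+\sum_{j=0}^{k-1}\lambda^{j+1}a_{k-1-j}$. Adding $a_k$ and re-indexing the second sum by $t=j+1$ merges $a_k$ into the convolution sum as the $t=0$ term (the $t=k$ term vanishes since $a_0=0$), yielding
\[
\|v_k\|_{BL}=a_k+b_k=\alpha_{\tilde c}\sum_{t=0}^{k-1}\lambda^t+\sum_{t=0}^{k-1}\lambda^t a_{k-t}.
\]
Substituting $a_{k-t}=\|\tilde c\|_\infty\frac{1-\beta^{k-t}}{1-\beta}$ and recalling $\lambda=\beta\alpha_{\mathcal{Z}}$ reproduces the claimed closed form exactly. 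The uniform bound then follows from $\sum_{t=0}^{k-1}\lambda^t\le (1-\beta\alpha_{\mathcal{Z}})^{-1}$ and $\frac{1-\beta^{k-t}}{1-\beta}\le (1-\beta)^{-1}$.

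The main obstacle is not any single estimate but the bookkeeping in the coupled recursion: one must use the \emph{full} $\|v_k\|_{BL}$ (not merely $\Lip(v_k)$) inside the dual-metric step, which couples $b_k$ to $a_k$, and then carefully re-index the resulting double sum so that the $a_k$ term folds into the discrete convolution. All remaining steps are routine contraction-type estimates.
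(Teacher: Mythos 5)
Your proposal is correct and follows essentially the same route as the paper: the same sup-norm bound $\|v_k\|_\infty\le\|\tilde c\|_\infty\frac{1-\beta^k}{1-\beta}$, the same key recursion $\|v_{k+1}\|_{BL}\le\|v_{k+1}\|_\infty+\alpha_{\tilde c}+\beta\alpha_{\mathcal Z}\|v_k\|_{BL}$ obtained via the cost-Lipschitz assumption and the dual form of $\rho_{BL}$ applied to $v_k/\|v_k\|_{BL}$ (which is exactly why the full BL norm, not just the Lipschitz seminorm, enters). The only difference is that you unroll the recursion forward into a discrete convolution while the paper verifies the closed form by induction; these are equivalent, and your observation about folding $a_k$ into the $t=0$ term of the sum is the same bookkeeping the paper performs in its re-indexing step.
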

\begin{proof}
It is easy to see that $\|v_k\|_\infty\leq \|\tilde{c}\|_\infty \sum_{t=0}^{k-1}\beta^t=\|\tilde{c}\|_\infty \frac{1-\beta^k}{1-\beta}$. Then, we use an inductive approach and assume the claim holds for $k$ and analyze the term $k+1$
\begin{align*}
\|v_{k+1}\|_{BL}= \|v_{k+1}\|_\infty +\sup_{x\neq y}\frac{|v_{k+1}(x)-v_{k+1}(y)|}{\rho_{BL}(x,y)}
\end{align*}
For the second term, we have
\begin{align*}
&|v_{k+1}(x)-v_{k+1}(y)|\leq \sup_u\left(|\tilde{c}(x,u)-\tilde{c}(y,u)|+\beta\left|\int v_k(z)\eta(dz|x,u)- \int v_k(z)\eta(dz|y,u)\right|\right)\\
&\leq \alpha_{\tilde{c}}\rho_{BL}(x,y)+\beta \|v_k\|_{BL}\alpha_{\mathcal{Z}} \rho_{BL}(x,y).
\end{align*}
Hence, using the induction hypothesis, we have that
\begin{align*}
&\|v_{k+1}\|_{BL}\leq \|v_{k+1}\|_\infty +\alpha_{\tilde{c}} +\beta \alpha_{\mathcal{Z}}\|v_k\|_{BL} \\
&\qquad\leq\|\tilde{c}\|_\infty \frac{1-\beta^{k+1}}{1-\beta}+\alpha_{\tilde{c}} + \beta\alpha_{\mathcal{Z}}\left(\alpha_{\tilde{c}} \sum_{t=0}^{k-1}(\beta\alpha_{\mathcal{Z}})^t + \|\tilde{c}\|_\infty\sum_{t=0}^{k-1}(\beta\alpha_{\mathcal{Z}})^t\frac{1-\beta^{k-t}}{1-\beta}\right)\\
&\qquad =\alpha_{\tilde{c}} \sum_{t'=0}^{k}(\beta\alpha_{\mathcal{Z}})^{t'} + \|\tilde{c}\|_\infty\sum_{t'=0}^{k}(\beta\alpha_{\mathcal{Z}})^{t'}\frac{1-\beta^{k-t'}}{1-\beta},\quad t'=t+1,
\end{align*}
which concludes the induction argument. Therefore, we can write
\begin{align*}
\|v_k\|_{BL}&\leq \alpha_{\tilde{c}} \sum_{t=0}^{k-1}(\beta\alpha_{\mathcal{Z}})^t + \|\tilde{c}\|_\infty\sum_{t=0}^{k-1}(\beta\alpha_{\mathcal{Z}})^t\frac{1-\beta^{k-t}}{1-\beta}\\
&=\left(\alpha_{\tilde{c}}+\frac{\|\tilde{c}\|_\infty}{1-\beta}\right)\frac{1-(\beta\alpha_{\mathcal{Z}})^k}{1-\beta\alpha_{\mathcal{Z}}}-\frac{\|\tilde{c}\|_\infty\beta^k(1-\alpha_{\mathcal{Z}}^k)}{(1-\beta)(1-\alpha_{\mathcal{Z}})}.
\end{align*}
\end{proof}
Now, we can prove Theorem \ref{finite_MDP}.

{\it Proof of Theorem \ref{finite_MDP}}

Consider the following dynamic programming operators, $\hat{T}_n,T_n: \B_b(\mathcal{Z}) \to \B_b(\mathcal{Z})$ ($\B_b(\mathcal{Z})$ denotes the set of measurable and bounded functions on $\mathcal{Z}$) such that for any $f\in \B_b(\mathcal{Z})$ and for any $z\in \mathcal{Z}$
\begin{align*}
(T_n(f))(z)&:=c(z,\tilde{\gamma}^*_N(z))+\beta \int f(y)\eta(dy|z,\tilde{\gamma}_N^*(z))\\
(\hat{T}_n(f))(z)&:=c(F(z),\tilde{\gamma}_N^*(z))+\beta \int f(y)\eta(dy|F(z),\tilde{\gamma}_N^*(z))
\end{align*}
where $\tilde{\gamma}_N^*$ denotes the extension of the optimal policy for the finite state space model to the belief space $\mathcal{Z}$, i.e. it is constant over the quantization bins, furthermore $F(z)$ is the nearest neighbor quantization map such that it maps $z$ to the closest point from the finite state space $\mathcal{Z}_{\hat{\pi}}^N$ (recall that $\rho_{BL}$ is used to metrize the belief space).

The optimal cost for the finite model, $J^N_\beta$ is only defined on a finite set, we denote by $\tilde{J}^N_\beta$ the extension of the optimal cost for the finite model to the belief space $\P(\mathds{X})$ by assigning the same values over the quantizaton bins , i.e. it is a piece-wise constant function over the quantization bins. Note that, we have 
\begin{align*}
J_\beta(\tilde{\gamma}_N^*,z)&=T_n(J_\beta(\tilde{\gamma}^*_N,z)),\\
\tilde{J}^N_\beta(\tilde{\gamma}_N^*,z) &= \hat{T}_n(\tilde{J}^N_\beta(\tilde{\gamma}_N^*,z)).
\end{align*}

Using these equalities, we write
\begin{align}\label{robust_bound}
&J_\beta(\tilde{\gamma}_N^*,z)-J_\beta(\gamma^*,z)\leq |T_n(J_\beta(\tilde{\gamma}_N^*,z))-T_n(J_\beta(\gamma^*,z))|\nonumber\\
&\hspace{3.5cm}+|T_n(J_\beta(\gamma^*,z))-\hat{T}_n(J_\beta(\gamma^*,z))|\nonumber\\
&\hspace{3.5cm}+|\hat{T}_n(J_\beta(\gamma^*,z))-\hat{T}_n(\tilde{J}^N_\beta(\tilde{\gamma}_N^*,z))|\nonumber\\
&\hspace{3.5cm}+|\tilde{J}^N_\beta(\tilde{\gamma}_N^*,z)-J_\beta(\gamma^*,z)|\nonumber\\
&\leq \beta \int |J_\beta(\tilde{\gamma}_N^*,z_1)-J_\beta(\gamma^*,z_1)|\eta(dz_1|z,\tilde{\gamma}_N^*(z))\nonumber\\
&\qquad+|c(z,\tilde{\gamma}_N^*(z))-c(F(z),\tilde{\gamma}_N^*(z))| \nonumber\\
&\qquad
+\beta|\int J_\beta(\gamma^*,z_1)\eta(dz_1|z,\tilde{\gamma}_N^*(z))-\int J_\beta(\gamma^*,z_1)\eta(dz_1|F(z),\tilde{\gamma}_N^*(z))|\nonumber\\
&\qquad+\beta \int |\tilde{J}_\beta^N(\tilde{\gamma}_N^*,z_1)-J_\beta(\gamma^*,z_1)|\eta(dz_1|F(z),\tilde{\gamma}_N^*(z))\nonumber\\
&\qquad+|\tilde{J}_\beta^N(\tilde{\gamma}_N^*,z)-J_\beta(\gamma^*,z)|\nonumber\\
&\leq \beta\sup_\gamma\bigg( E^\gamma_{z}[|J_\beta(\tilde{\gamma}_N^*,Z_1)-J_\beta(\gamma^*,Z_1)|]\bigg)+ \alpha_c L(z)+\beta\alpha_{\mathcal{Z}} L(z)\|J_\beta^*\|_{BL} \nonumber\\
&\qquad\qquad\qquad +\beta E^\gamma_{z,N}[|\tilde{J}_\beta^N(\tilde{\gamma}_N^*,Z_1)-J_\beta(\gamma^*,Z_1)|]+|\tilde{J}_\beta^N(\tilde{\gamma}_N^*,z)-J_\beta(\gamma^*,z)|
\end{align}
where $E^N_{z,\gamma}$ denotes the expectation with respect the the kernel $\eta(dy|F(z),\gamma(z))$ when the initial point is $z$.

Now, using Lemma \ref{cont_bound}, we can write that:
\begin{align}\label{cont_g_notation}
& |\tilde{J}^N_\beta(\tilde{\gamma}_N^*,z)-J_\beta(\gamma^*,z)| \nonumber \\ 
&\leq \lim_{k\to\infty}\sup_{\gamma}\Bigg( \alpha_c \sum_{t=0}^{k-1}\beta^{t}\left(E^\gamma_{z}\left[L(Z_t)\right]+ 3 \alpha_{\mathcal{Z}} \sum_{m=0}^{t-1}(4\alpha_{\mathcal{Z}}+1)^mE^\gamma_{z}[L(Z_{t-m-1})]\right)\nonumber \\
&+  \alpha_{\mathcal{Z}} \sum_{t=0}^{k-1}\beta^{t+1}\|v_{k-t-1}\|_{BL}\left(E^\gamma_{z}\left[L(Z_t)\right]+ 3 \alpha_{\mathcal{Z}} \sum_{m=0}^{t-1}(4\alpha_{\mathcal{Z}}+1)^mE^\gamma_{z}[L(Z_{t-m-1})]\right)\Bigg) \nonumber \\
& :=g(z).
\end{align}
We first introduce the following notation along with the above notation (\ref{cont_g_notation})
\begin{align*}
f(z)&:=|J_\beta(\tilde{\gamma}_N^*,z)-J_\beta(\gamma^*,z)|.
\end{align*}

Notice that with the new notation, we can rewrite the bound on (\ref{robust_bound}) as:
\begin{align*}
f(z)&\leq \beta \sup_\gamma E_z^\gamma[f(Z_1)]+\alpha_cL(z)+\beta \alpha_{\mathcal{Z}} L(z)\|J^*_\beta\|_{BL}+\beta E_{z,N}^\gamma[g(Z_1)]+g(z)\\
&\leq  \beta \sup_\gamma E_z^\gamma[f(Z_1)]+\alpha_cL(z)+\beta \alpha_{\mathcal{Z}} L(z)\|J^*_\beta\|_{BL}+\beta E_{z}^\gamma[g(Z_1)]+\beta\|g\|_{BL}\alpha_{\mathcal{Z}}L(z) +g(z)
\end{align*}
where we used the fact that 
\begin{align*}
 E_{z,N}^\gamma[g(Z_1)]- E_{z}^\gamma[g(Z_1)]&=\int g(z_1)\eta(dz_1|F(z),\gamma(z))-\int g(z_1)\eta(dz_1|z,\gamma(z))\\
&\leq \|g\|_{BL}\alpha_{\mathcal{Z}}L(z).
\end{align*}
Using the same bound on $f(Z_1)$ one can also write that for any initial point $z_0$:
\begin{align*}
f(z_0)&\leq \sup_{\gamma\in\Gamma}\beta^2E^\gamma_{z_0}[f(Z_2)]+ \alpha_c\sum_{t=0}^{1}\beta^tE^\gamma_{z_0}[L(Z_t)]+\alpha_{\mathcal{Z}}\|J_\beta^*\|_{BL}\sum_{t=0}^{1}\beta^{t+1}E^\gamma_{z_0}[L(Z_t)]\nonumber\\
&+\sum_{t=0}^{1}\beta^{t+1}E^\gamma_{z_0}[g(Z_t)]+\|g\|_{BL}\alpha_{\mathcal{Z}}\sum_{t=0}^{1}\beta^{t+1}E^\gamma_{z_0}[L(Z_t)]+\sum_{t=0}^{1}\beta^tE^\gamma_{z_0}[g(Z_t)].
\end{align*}
In general, for any $k<\infty$, we can write
\begin{align}\label{robust_bound2}
f(z_0)&\leq \sup_{\gamma\in\Gamma}\Bigg(\beta^kE^\gamma_{z_0}[f(Z_k)]+ \alpha_c\sum_{t=0}^{k-1}\beta^tE^\gamma_{z_0}[L(Z_t)]+\alpha_{\mathcal{Z}}\|J_\beta^*\|_{BL}\sum_{t=0}^{k-1}\beta^{t+1}E^\gamma_{z_0}[L(Z_t)]\nonumber\\
&+\sum_{t=0}^{k-1}\beta^{t+1}E^\gamma_{z_0}[g(Z_t)]+\|g\|_{BL}\alpha_{\mathcal{Z}}\sum_{t=0}^{k-1}\beta^{t+1}E^\gamma_{z_0}[L(Z_t)]+\sum_{t=0}^{k-1}\beta^tE^\gamma_{z_0}[g(Z_t)]\Bigg).
\end{align}
Recall that our main goal is to bound $E_{\pi_0}[f(Z_0)|Y_{[0,N]},\gamma(Y_{[0,N-1]})]$ where $Z_0=P_{\pi_0}(X_N\in\cdot|Y_{[0,N]},\gamma(Y_{[0,N-1]})$. To this end, first notice that using Lemma \ref{quant_bound}, we have for any $t$
\begin{align}\label{loss_bound}
&E_{\pi_0}\left[E^\gamma_{Z_0}[L(Z_t)]|Y_{[0,N]},\gamma(Y_{[0,N-1]})\right]\nonumber\\
&\leq \sup_{\pi\in\P(\mathds{X})}\sup_{\gamma\in\Gamma}E^{\pi,\gamma}\left[\rho_{BL}\left(P^{\pi}(X_N\in\cdot|Y_{[0,N]}),P^{\hat{\pi}}(X_N\in\cdot|Y_{[0,N]})\right)\right]:=B.
\end{align}

Using this bound and Lemma \ref{val_bound} for $\|v_{k-t-1}\|_{BL}$, we can write that
\begin{align}\label{g_bound}
 &E\left[E^\gamma_{Z_0}[g(Z_t)]\big|Y_{[0,N]},\gamma(Y_{[0,N-1]})\right]\nonumber\\
&\leq  \lim_{k\to\infty}\Bigg(\alpha_c \sum_{t=0}^{k-1}\beta^{t}\left( B+ 3\alpha_{\mathcal{Z}} \sum_{m=0}^{t-1}B(4\alpha_{\mathcal{Z}}+1)^m\right) \nonumber \\
& \qquad +  \alpha_{\mathcal{Z}} \sum_{t=0}^{k-1}\beta^{t+1}\|v_{k-t-1}\|_{BL}\left(B+ 3\alpha_{\mathcal{Z}} \sum_{m=0}^{t-1}B(4\alpha_{\mathcal{Z}}+1)^m\right)\Bigg)\nonumber\\
&\leq B \left(\alpha_c + \beta\alpha_{\mathcal{Z}}\|J_\beta^*\|_{BL}\right)\lim_{k\to\infty}\sum_{t=0}^{k-1}\beta^{t}\left( 1+ 3\alpha_{\mathcal{Z}} \sum_{m=0}^{t-1}(4\alpha_{\mathcal{Z}}+1)^m\right)\nonumber\\
&\leq B \left(\alpha_c + \beta\alpha_{\mathcal{Z}}\|J_\beta^*\|_{BL}\right)\lim_{k\to\infty}\sum_{t=0}^{k-1}\beta^{t}\left(1+3\alpha_{\mathcal{Z}}\frac{(4\alpha_{\mathcal{Z}}+1)^t-1}{4\alpha_{\mathcal{Z}}}\right)\nonumber\\
&\leq B \left(\alpha_c + \beta\alpha_{\mathcal{Z}}\|J_\beta^*\|_{BL}\right)\lim_{k\to\infty}\sum_{t=0}^{k-1}\beta^{t}(4\alpha_{\mathcal{Z}}+1)^t\nonumber\\
&=  B K_0(\beta,\alpha_{\mathcal{Z}},\alpha_{\tilde{c}},\|\tilde{c}\|_\infty)
\end{align}
where
\begin{align*}
K_0(\beta,\alpha_{\mathcal{Z}},\alpha_{\tilde{c}},\|\tilde{c}\|_\infty) =&  \left(\alpha_c + \beta\alpha_{\mathcal{Z}}\|J_\beta^*\|_{BL}\right)\frac{1}{1-\beta(4\alpha_{\mathcal{Z}}+1)}.
\end{align*}

By Lemma \ref{g_lb_bound} we also have that
\begin{align*}
\|g\|_{BL}\leq   &(\alpha_c+\beta\alpha_{\mathcal{Z}}\|J_\beta\|_{BL})\left(\frac{2}{1-\beta(4\alpha_{\mathcal{Z}}+1)}+\frac{3\alpha_{\mathcal{Z}}}{1-\beta\alpha_{\mathcal{Z}}}+\frac{9\alpha_{\mathcal{Z}}^2}{(1-\beta(4\alpha_{\mathcal{Z}}+1))^2}\right)\\
&:=\hat{K}_0(\beta,\alpha_{\mathcal{Z}},\alpha_{\tilde{c}},\|\tilde{c}\|_\infty) 
\end{align*}


 Going back to (\ref{robust_bound2}), using (\ref{loss_bound}) and (\ref{g_bound}) and taking the limit $k\to\infty$:
\begin{align*}
E[f(Z_0)|Y_{[0,N]},\gamma(Y_{[0,N-1]})]&\leq \frac{B\alpha_c}{1-\beta}+\frac{\beta \alpha_{\mathcal{Z}}\|J_\beta^*\|_{BL}B}{1-\beta}+\frac{\beta  K_0(\beta,\alpha_{\mathcal{Z}},\alpha_{\tilde{c}},\|\tilde{c}\|_\infty) B}{1-\beta}\\
&\qquad+\frac{ \hat{K}_0(\beta,\alpha_{\mathcal{Z}},\alpha_{\tilde{c}},\|\tilde{c}\|_\infty)\alpha_{\mathcal{Z}}\beta B}{1-\beta}+\frac{ K_0(\beta,\alpha_{\mathcal{Z}},\alpha_{\tilde{c}},\|\tilde{c}\|_\infty)B}{1-\beta}.
\end{align*}

Thus, we can write that
\begin{align*}
E[f(Z_0)|&Y_{[0,N]},\gamma(Y_{[0,N-1]})]\leq \\
&K \sup_{\pi\in\P(\mathds{X})}\sup_{\gamma\in\Gamma}E^{\pi,\gamma}\left[\rho_{BL}\left(P^{\pi}(X_N\in\cdot|Y_{[0,N]}),P^{\hat{\pi}}(X_N\in\cdot|Y_{[0,N]})\right)\right],
\end{align*}
where
\begin{align}\label{KDefinition1}
K=\frac{\alpha_c+\beta\alpha_{\mathcal{Z}}\|J^*_\beta\|_{BL}+(\beta+1) K_0(\beta,\alpha_{\mathcal{Z}},\alpha_{\tilde{c}},\|\tilde{c}\|_\infty)+\hat{K}_0(\beta,\alpha_{\mathcal{Z}},\alpha_{\tilde{c}},\|\tilde{c}\|_\infty) \beta\alpha_{\mathcal{Z}} }{1-\beta}
\end{align}
and using Lemma \ref{lip_val}
\begin{align*}
\|J_\beta^*\|_{BL}\leq \frac{1}{1-\beta\alpha_{\mathcal{Z}}}\left(\frac{\|\tilde{c}\|_\infty}{1-\beta}+\alpha_{\tilde{c}}\right).
\end{align*}

\section{Conclusion}
In this paper, we studied performance bounds for policies that use only a finite-window of recent observation and action variables rather than the entire history in partially observed stochastic control problems. We have rigorously established approximation bounds that are easy to compute, and have shown that this bound critically depends on the ergodicity and stability properties of the belief-state process. We have provided the results for continuous-space valued state spaces and finite observation and action spaces, however our studies suggest that these results can also be generalized to real valued observation and actions also. Application to decentralized POMDPs is another direction that will benefit from the analysis presented.

\appendix
\section{Technical Proofs of Supporting Results}
In this section, we prove the proofs of some technical results.
\subsection{Proof of \textbf{Step 1} in the Proof of Lemma \ref{cont_bound}}\label{SecStep1App}
We claim that
\begin{align*}
|\tilde{v}_k^N(z)-v_k(z)|\leq\sup_{\gamma\in\Gamma}\left(\alpha_{\tilde{c}} \sum_{t=0}^{k-1}\beta^{t}E^N_{z,\gamma}\left[L(Z_t)\right]+\sum_{t=0}^{k-1}\beta^{t+1}\|v_{k-t-1}\|_{BL}E^N_{z,\gamma}\left[L(Z_t)\right]\alpha_{\mathcal{Z}} \right)
\end{align*}
where $L(z)$ is the loss function due to the quantization, i.e. $L(z)=\rho_{BL}(z,F(z))$ 
 with $F$ defined in (\ref{quant_map}).

We prove the claim using an inductive approach: for $k=1$ we have (noting $v_0,\tilde{v}^N_0\equiv 0$)
\begin{align*}
&\tilde{v}_1^N(z)=\min_u\left(\tilde{c}(F(z),u)+\beta \int \tilde{v}_0^N(y)\eta(dy|F(z),u)\right)=\min_u \tilde{c}(F(z),u)\\
&v_1(z)=\min_u\left(\tilde{c}(z,u)+\beta \int v_0(y)\eta(dy|z,u)\right)=\min_u \tilde{c}(z,u).
\end{align*} 
Note that under the stated assumptions the measurable selection conditions hold, and the minimum can be achieved using a policy $\gamma$ for the original model and a policy $\gamma^N$ for the finite model which defined only on a finite set. By extending the finite model policy $\gamma^N$ over all state space $\P(\mathds{X})$, we can write that
\begin{align*}
|\tilde{v}_1^N(z)-v_1(z)|&\leq \max\bigg(\tilde{c}(F(z),\gamma(z))-\tilde{c}(z,\gamma(z)),\tilde{c}(z,\gamma^N(z))-\tilde{c}(F(z),\gamma^N(z))\bigg)\\
&\leq\sup_\gamma |\tilde{c}(F(z),\gamma(z))-\tilde{c}(z,\gamma(z))| \leq \alpha_{\tilde{c}} L(z)
\end{align*}
which completes the proof for $k=1$. Now, we assume that the claim holds for $k$ and analyze the step $k+1$. Similar to $k=1$ case, we can write 
\begin{align*}
&|\tilde{v}_{k+1}^N(z)-v_{k+1}(z)|\\
&\leq \sup_\gamma\left|\tilde{c}(F(z),\gamma(z))-\tilde{c}(z,\gamma(z))+ \beta \int \tilde{v}_k^N(y)\eta(dy|F(z),\gamma(z)) -\beta \int v_k(y)\eta(dy|z,\gamma(z))\right|\\
&\leq \sup_\gamma \Bigg(\left|\tilde{c}(F(z),\gamma(z))-\tilde{c}(z,\gamma(z))\right|+ \beta \int |\tilde{v}_k^N(y)-v_k(y)|\eta(dy|F(z),\gamma(z))\\
&\qquad\qquad+\beta\left| \int v_k(y) \eta(dy|F(z),\gamma(z))-\int v_k(y) \eta(dy|z,\gamma(z))\right|\Bigg)\\
&\leq \alpha_{\tilde{c}} L(z) +\beta \|v_k\|_{BL}\alpha_{\mathcal{Z}} L(z)\\
&+\beta \sup_\gamma\bigg(\int \left|\alpha_{\tilde{c}} \sum_{t=0}^{k-1}\beta^{t}E^N_{y,\gamma}\left[L(Z_t)\right]+\sum_{t=0}^{k-1}\beta^{t+1}\|v_{k-t-1}\|_{BL}E^N_{y,\gamma}\left[L(Z_t)\right]\alpha_{\mathcal{Z}}]\right|\eta(dy|F(z),\gamma(z))\bigg)\\
&\leq \sup_\gamma\bigg(\alpha_{\tilde{c}} L(z) + \alpha_{\tilde{c}} \sum_{t=0}^{k-1}\beta^{t+1} E^N_{z,\gamma}\left[L(Z_{t+1})\right]+ \beta \|v_k\|_{BL}\alpha_{\mathcal{Z}}L(z)\\
&\qquad\qquad+\sum_{t=0}^{k-1}\beta^{t+2}\|v_{k-t-1}\|_{BL}E^N_{z,\gamma}[L(Z_{t+1})]\bigg)\\
&=\sup_\gamma\left(\alpha_{\tilde{c}} \sum_{t'=0}^{k}\beta^{t'} E^N_{z,\gamma}\left[L(Z_{t'})\right]+\sum_{t'=0}^{k}\beta^{t'+1}\|v_{k-t'}\|_{BL}E^N_{z,\gamma}\left[L(Z_{t'})\right]\right),\quad (t'=t+1).
\end{align*}
For the last two steps, note that $E_{y,\gamma}^N\left[L(Z_t)\right]$ denotes the expected loss at time $t$ when the initial state $Z_0=y$, thus using the iterative expectation we have
\begin{align*}
\int E_{y,\gamma}^N\left[L(Z_t)\right]\eta(dy|F(z),\gamma(z))=E_{z,\gamma}^N\left[L(Z_{t+1})\right].
\end{align*}

Hence, we have proved that for all $k\geq 1$
\begin{align*}
|\tilde{v}_k^N(z)-v_k(z)|\leq\sup_\gamma\bigg(\alpha_{\tilde{c}} \sum_{t=0}^{k-1}\beta^{t}E^N_{z,\gamma}\left[L(Z_t)\right]&+\sum_{t=0}^{k-1}\beta^{t+1}\|v_{k-t-1}\|_{BL}E^N_{z,\gamma}\left[L(Z_t)\right]\alpha_{\mathcal{Z}}\bigg).
\end{align*}

\subsection{Proof of \textbf{Step 2} in the Proof of Lemma  \ref{cont_bound}} \label{SecStep2App}
The claim is that 
\begin{align*}
&E^N_{z,\gamma}\left[L(Z_t)\right]\leq E_{z,\gamma}\left[L(Z_t)\right]+ 3 \alpha_{\mathcal{Z}} \sum_{m=0}^{t-1}(4\alpha_{\mathcal{Z}}+1)^mE_{z,\gamma}[L(Z_{t-m-1})].
\end{align*}

We first write 
\begin{align}\label{loss_first_bound}
&E^N_{z,\gamma}\left[L(Z_t)\right]\leq E_{z,\gamma}\left[L(Z_t)\right]+\bigg(\|L\|_{BL} \bigg) \rho_{BL}\left(P^{\gamma}_{z,t},P^{\gamma,N}_{z,t}\right)
\end{align}
where \[P^{\gamma}_{z,t}, \quad P^{\gamma,N}_{z,t}\] are the marginal distributions of the state $z_t$ for the true model and approximate model respectively, with $Z_0=z$.

We focus on the term $\|L\|_{BL}\rho_{BL}\left(P^{\gamma}_{z,t},P^{\gamma,N}_{z,t}\right)$. We first claim that $\|L\|_{BL}\leq 3$, where $L(z)=\rho(z,F(z))$. Recall that $F$ determines quantization given in (\ref{quant_map}).
\begin{align*}
\|L\|_{BL}\leq \|L\|_\infty + \sup_{z,z'}\frac{\left|\rho(z,F(z))-\rho(z',F(z'))\right|}{\rho(z,z')}\leq 2+ \sup_{z,z'}\frac{\left|\rho(z,F(z))-\rho(z',F(z'))\right|}{\rho(z,z')}
\end{align*}
where we used the fact that 
\begin{align*}
\|L\|_\infty=\sup_z \rho_{BL}(z,F(z))\leq 2
\end{align*}
as for any two probability measures $\mu,\nu$,we have that $\rho_{BL}(\mu,\nu)\leq 2$ (see (\ref{BLmetric})).

Note that $F$ is a nearest neighbor quantizer as defined in ($\ref{quant_map}$), thus we can write that 
\begin{align*}
\left|\rho(z,F(z))-\rho(z',F(z'))\right|&\leq \max\left(\rho(z,F(z'))-\rho(z',F(z')),\rho(z',F(z))-\rho(z,F(z)) \right)\\
&\leq \sup_{\hat{z}} \left|\rho(z,\hat{z})-\rho(z',\hat{z})\right|\leq \rho(z,z')
\end{align*}
where for the last step we used the triangle inequality. Hence we can conclude that 
\begin{align}\label{L_bl_bound}
\|L\|_{BL}\leq2+ \sup_{z,z'}\frac{\left|\rho(z,F(z))-\rho(z',F(z'))\right|}{\rho(z,z')}\leq 3.
\end{align}

Now, we show that 
\begin{align}\label{product_meas_bl}
\rho_{BL}\left(P^{\gamma}_{z,t},P^{\gamma,N}_{z,t}\right)\leq \alpha_{\mathcal{Z}} \sum_{m=0}^{t-1}(4\alpha_{\mathcal{Z}}+1)^mE_{z,\gamma}[L(Z_{t-m-1})].
\end{align}
We prove the claim by induction: for $t=1$
\begin{align*}
\rho_{BL}\left(P^{\gamma}_{z,1},P^{\gamma,N}_{z,1}\right)&=\sup_{\|f\|_{BL}\leq1}\left|\int f(z_1)\eta(dz_1|F(z),u_1)-\int f(z_1)\eta(z_1)\eta(dz_1|z,u_1)\right|\\
&\leq \alpha_{\mathcal{Z}} L(z).
\end{align*}
Now assume the claim holds for $t-1$
\begin{align*}
&\rho_{BL}\left(P^{\gamma}_{z,t},P^{\gamma,N}_{z,t}\right) =\sup_{\|f\|_{BL}\leq 1}\bigg|\int f(z_t)\eta(dz_t|z_{t-1},\gamma(z_{t-1})))P^{\gamma}_{z,t-1}(dz_{t-1})\\
&\hspace{50mm}-\int f(z_t)\eta(dz_t|F(z_{t-1}),\gamma(z_{t-1}))P^{\gamma,N}_{z,t-1}(dz_{t-1})\bigg|\\
&\leq \sup_{\|f\|_{BL}\leq 1}\bigg|\int f(z_t)\eta(dz_t|z_{t-1},\gamma(z_{t-1}))P^{\gamma}_{z,t-1}(dz_{t-1})\\
&\hspace{25mm}-\int f(z_t)\eta(dz_t|z_{t-1},\gamma(z_{t-1}))P^{\gamma,N}_{z,t-1}(dz_{t-1})\bigg|\\
&\quad+ \sup_{\|f\|_{BL}\leq 1}\bigg|\int f(z_t)\eta(dz_t|z_{t-1},\gamma(z_{t-1}))P^{\gamma,N}_{z,t-1}(dz_{t-1})\\
&\hspace{25mm}-\int f(z_t)\eta(dz_t|F(z_{t-1}),\gamma(z_{t-1}))P^{\gamma,N}_{z,t-1}(dz_{t-1})\bigg|\\
&\leq (\|f\|_\infty+\alpha_{\mathcal{Z}})\rho_{BL}\left(P^{\gamma}_{z,t-1},P^{\gamma,N}_{z,t-1}\right)+\alpha_{\mathcal{Z}}E^N_z[L(Z_{t-1})]\\
&\leq  (1+\alpha_{\mathcal{Z}})\rho_{BL}\left(P^{\gamma}_{z,t-1},P^{\gamma,N}_{z,t-1}\right)+\alpha_{\mathcal{Z}}E^N_z[L(Z_{t-1})].
\end{align*}
Using (\ref{loss_first_bound}), we can write 
\begin{align*}
\rho_{BL}&\left(P^{\gamma}_{z,t},P^{\gamma,N}_{z,t}\right)\\
&\leq (1+\alpha_{\mathcal{Z}})\rho_{BL}\left(P^{\gamma}_{z,t-1},P^{\gamma,N}_{z,t-1}\right)+\alpha_{\mathcal{Z}} \bigg( E_z[L(Z_{t-1})]+3 \rho_{BL}\left(P^{\gamma}_{z,t-1},P_{z,t-1}^{\gamma,N}\right)\bigg)\\
&=(4\alpha_{\mathcal{Z}} +1)\rho_{BL}\left(P^{\gamma}_{z,t-1},P^{\gamma,N}_{z,t-1}\right)+\alpha_{\mathcal{Z}} E_z[L(Z_{t-1})]\\
&\leq (4\alpha_{\mathcal{Z}} +1) \left(\alpha_{\mathcal{Z}} \sum_{m=0}^{t-2}(4\alpha_{\mathcal{Z}}+1)^mE_z[L(Z_{t-m-2})]\right)+\alpha_{\mathcal{Z}} E_z[L(Z_{t-1})]\\
&=\alpha_{\mathcal{Z}} \sum_{m=0}^{t-1}(4\alpha_{\mathcal{Z}}+1)^mE[L_z(Z_{t-m-1})]
\end{align*}
which completes the proof of (\ref{product_meas_bl}).

Now, we go back to the main claim and start from (\ref{loss_first_bound}) to write
\begin{align*}
&E^N_{z,\gamma}\left[L(Z_t)\right]\leq E_{z,\gamma}\left[L(Z_t)\right]+\|L\|_{BL}\rho_{BL}\left(P^{\gamma}_{z,t},P^{\gamma,N}_{z,t}\right)\leq  E_{z,\gamma}\left[L(Z_t)\right]+3\rho_{BL}\left(P^{\gamma}_{z,t},P^{\gamma,N}_{z,t}\right)\\
&\leq E_{z,\gamma}\left[L(Z_t)\right]+ 3 \alpha_{\mathcal{Z}} \sum_{m=0}^{t-1}(4\alpha_{\mathcal{Z}}+1)^mE_{z,\gamma}[L(Z_{t-m-1})].
\end{align*}

\begin{remark}\label{beta_remark}
Note that  for the previous proof, $(4\alpha_{\mathcal{Z}}+1)$ can be replaced by $1+\alpha_{\mathcal{Z}}(\|L\|_\infty+2)$ where 
\begin{align*}
\|L\|_\infty=\sup_{\pi\in\P(\mathds{X})}\sup_{\gamma\in\Gamma}\sup_{y_{[0,N]},u_{[0,N-1]}} \rho_{BL}\left(P^{\pi}(\cdot|y_{[0,N]},u_{[0,N-1]}),P^{\hat{\pi}}(\cdot|y_{[0,N]},u_{[0,N-1]})\right)
\end{align*}
which is upper bounded by $2$, however, usually smaller than $2$, provided there is a uniform filter stability.
\end{remark}

\begin{lemma}\label{action_policy_change}
We introduce the following notation
\begin{align*}
\hat{E}_{z}[L(Z_t)]&=\sup_{u_0}\int\dots\sup_{u_{t-2}}\int \sup_{u_{t-1}}\int L(z_t)\eta(dz_t|z_{t-1},u_{t-1})\eta(dz_{t-1}|z_{t-2},u_{t-2})\dots \eta(dz_1|z,u_0).
\end{align*}
Under Assumption \ref{belief_reg}, we have that
\begin{align*}
\sup_{\gamma\in\Gamma}E_{z,\gamma}[L(Z_t)]= \hat{E}_{z}[L(Z_t)].
\end{align*}
\end{lemma}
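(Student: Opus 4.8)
The plan is to recognize that both sides equal the value of a finite-horizon \emph{maximization} problem on the fully observed belief-MDP $(\mathcal{Z},\mathds{U},\eta)$ with terminal reward $L$ and no running reward. To this end I would introduce the backward value functions $V_0(z):=L(z)$ and, for $1\le k\le t$,
\[
V_k(z):=\sup_{u\in\mathds{U}}\int_{\mathcal{Z}}V_{k-1}(z_1)\,\eta(dz_1\,|\,z,u).
\]
Unrolling this recursion and pushing each $\sup_u$ inside the corresponding integral shows directly that $\hat{E}_z[L(Z_t)]=V_t(z)$, so it suffices to prove $\sup_{\gamma\in\Gamma}E_{z,\gamma}[L(Z_t)]=V_t(z)$.

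For the upper bound I would fix an arbitrary admissible $\gamma$ and work backward from the last stage using the tower property. Writing $\mathcal{F}_k=\sigma(Z_{[0,k]},U_{[0,k-1]})$ for the natural filtration of the belief process, I would show by backward induction on $k=t,t-1,\dots,0$ that $E_{z,\gamma}[L(Z_t)\mid\mathcal{F}_k]\le V_{t-k}(Z_k)$ almost surely. The base case $k=t$ is $V_0(Z_t)=L(Z_t)$, and the inductive step uses that $U_k$ is $\mathcal{F}_k$-measurable, so $E[V_{t-k-1}(Z_{k+1})\mid\mathcal{F}_k]=\int V_{t-k-1}(z')\,\eta(dz'|Z_k,U_k)\le\sup_u\int V_{t-k-1}(z')\,\eta(dz'|Z_k,u)=V_{t-k}(Z_k)$. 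Evaluating at $k=0$ and taking expectations yields $E_{z,\gamma}[L(Z_t)]\le V_t(z)$, and taking the supremum over $\gamma$ gives the inequality ``$\le$''.

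For the reverse inequality I would construct an explicit Markov feedback policy attaining $V_t(z)$, and here the finiteness of $\mathds{U}$ is the decisive simplification. Since $L$ is bounded with $\|L\|_{BL}\le 3$ and $\eta(\cdot|z,u)$ is $\rho_{BL}$-Lipschitz in $z$ by Assumption \ref{belief_reg}, an easy induction shows each $V_k$ is bounded and $\rho_{BL}$-continuous; hence $z\mapsto\int V_{k-1}\,d\eta(\cdot|z,u)$ is continuous in $z$ for each $u$, and $V_k=\max_{u\in\mathds{U}}\int V_{k-1}\,d\eta(\cdot|\,\cdot\,,u)$ is a maximum of finitely many continuous functions, so a measurable maximizing selector $u_k^*(z)$ exists (for instance the first maximizer in a fixed enumeration of $\mathds{U}$). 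The Markov policy applying $u^*_{t-k}$ at stage $t-k$ corresponds, via the belief-feedback identification $\gamma(y_{[0,n]})=\Phi(\pi_n)$, to an admissible element of $\Gamma$, and along it the backward induction above holds with equality, giving $E_{z,\gamma}[L(Z_t)]=V_t(z)$ and hence ``$\ge$''. Combining the two inequalities yields the claimed equality.

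The main obstacle, and the only nontrivial point, is the measurable selection required for the lower bound, namely ensuring that the stagewise maximizers assemble into a genuinely admissible (measurable) policy rather than a merely pointwise-optimal action rule. Because $\mathds{U}$ is finite, however, this reduces to checking continuity (equivalently Borel measurability) of the intermediate value functions $V_k$, which follows at once from the $\rho_{BL}$-Lipschitz regularity of $\eta$ posited in Assumption \ref{belief_reg} together with $\|L\|_{BL}\le 3$; no infinite-dimensional measurable selection theorem is needed.
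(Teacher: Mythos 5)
Your proposal is correct and follows essentially the same route as the paper: the upper bound by replacing the policy's actions stage-by-stage with suprema (you phrase this as a backward induction on value functions $V_k$ with the tower property, the paper as an iterated-integral manipulation), and the lower bound by a measurable selection of stagewise maximizers assembled into an admissible policy. Your write-up is if anything slightly more explicit than the paper's on why the selector is measurable (finiteness of $\mathds{U}$ plus $\rho_{BL}$-continuity of the intermediate value functions) and on handling history-dependent policies via the filtration, but these are refinements of the same argument rather than a different approach.
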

\begin{proof}
Recall that for a fixed policy $\gamma=\{\gamma_t\}_t$,
\begin{align*}
E_{z,\gamma}[L(Z_t)]=\int L(z_t)\eta(dz_t|z_{t-1},\gamma_{t-1}(z_{t-1}))\eta(dz_{t-1}|z_{t-1},\gamma_{t-2}(z_{t-2}))\dots\eta(dz_1|z,\gamma_0(z_0))
\end{align*}
It is easy to see that 
\begin{align*}
&\int L(z_t)\eta(dz_t|z_{t-1},\gamma_{t-1}(z_{t-1}))\eta(dz_{t-1}|z_{t-1},\gamma_{t-2}(z_{t-2}))\dots\eta(dz_1|z,\gamma_0(z_0))\\
&\leq\int\sup_{u_{t-1}}\int L(z_t)\eta(dz_t|z_{t-1},u_{t-1})\eta(dz_{t-1}|z_{t-1},\gamma_{t-2}(z_{t-2}))\dots\eta(dz_1|z,\gamma_0(z_0))
\end{align*}
By repeating this step we can have 
\begin{align*}
&\int L(z_t)\eta(dz_t|z_{t-1},\gamma_{t-1}(z_{t-1}))\eta(dz_{t-1}|z_{t-1},\gamma_{t-2}(z_{t-2}))\dots\eta(dz_1|z,\gamma_0(z_0))\\
&\leq\sup_{u_0}\int\dots\sup_{u_{t-2}}\int \sup_{u_{t-1}}\int L(z_t)\eta(dz_t|z_{t-1},u_{t-1})\eta(dz_{t-1}|z_{t-2},u_{t-2})\dots \eta(dz_1|z,u_0)
\end{align*}
Hence by taking supremum over all policies we can write
\begin{align*}
\sup_{\gamma\in\Gamma}E_{z,\gamma}[L(Z_t)]\leq\hat{E}_{z}[L(Z_t)].
\end{align*}
For the other direction, we first focus on the term $\sup_{u_{t-1}}\int L(z_t)\eta(dz_t|z_{t-1},u_{t-1})$, using Assumption \ref{belief_reg}, one can show that there exists a measurable map $\gamma_{t-1}$ such that
\begin{align*}
\sup_{u_{t-1}}\int L(z_t)\eta(dz_t|z_{t-1},u_{t-1})=\int L(z_t)\eta(dz_t|z_{t-1},\gamma_{t-1}(z_{t-1})).
\end{align*}
Using the same argument, we can see that there exists a sequence of measurable functions \newline$\{\gamma_0,\gamma_1,\dots,\gamma_{t-1}\}$ such that
\begin{align*}
&\sup_{u_0}\int\dots\sup_{u_{t-2}}\int \sup_{u_{t-1}}\int L(z_t)\eta(dz_t|z_{t-1},u_{t-1})\eta(dz_{t-1}|z_{t-2},u_{t-2})\dots \eta(dz_1|z,u_0)\\
&=\int L(z_t)\eta(dz_t|z_{t-1},\gamma_{t-1}(z_{t-1}))\eta(dz_{t-1}|z_{t-1},\gamma_{t-2}(z_{t-2}))\dots\eta(dz_1|z,\gamma_0(z_0)).
\end{align*}
Hence we can write that
\begin{align*}
\hat{E}_{z}[L(Z_t)]\leq \sup_{\gamma\in\Gamma}E_{z,\gamma}[L(Z_t)]
\end{align*}
which proves the main claim.
\end{proof}

\begin{lemma}\label{g_lb_bound}
Under Assumption \ref{belief_reg}
\begin{align*}
\|g\|_{BL}\leq   (\alpha_c+\beta\alpha_{\mathcal{Z}}\|J_\beta\|_{BL})\left(\frac{2}{1-\beta(4\alpha_{\mathcal{Z}}+1)}+\frac{3\alpha_{\mathcal{Z}}}{1-\beta\alpha_{\mathcal{Z}}}+\frac{9\alpha_{\mathcal{Z}}^2}{(1-\beta(4\alpha_{\mathcal{Z}}+1))^2}\right).
\end{align*}
\end{lemma}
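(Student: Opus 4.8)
The plan is to bound the two pieces of $\|g\|_{BL}=\|g\|_\infty+\Lip_{\rho_{BL}}(g)$ separately, after first factoring out a common multiplicative constant. Passing to the limit $k\to\infty$ in the definition (\ref{cont_g_notation}) of $g$ and bounding $\|v_{k-t-1}\|_{BL}$ uniformly by Lemma \ref{val_bound} (whose limiting value is controlled by $\|J_\beta\|_{BL}$), every summand carries the prefactor $A:=\alpha_c+\beta\alpha_{\mathcal{Z}}\|J_\beta\|_{BL}$, so that
\begin{align*}
g(z)\le A\,\lim_{k\to\infty}\sup_{\gamma}\sum_{t=0}^{k-1}\beta^{t}\left(E^\gamma_{z}[L(Z_t)]+3\alpha_{\mathcal{Z}}\sum_{m=0}^{t-1}(4\alpha_{\mathcal{Z}}+1)^mE^\gamma_{z}[L(Z_{t-m-1})]\right).
\end{align*}
Throughout I will use the two elementary facts recorded in (\ref{L_bl_bound}), namely $\|L\|_\infty\le2$ and $\|L\|_{BL}\le3$.

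For the sup-norm term I would use $E^\gamma_z[L(Z_t)]\le\|L\|_\infty\le2$ together with the telescoping estimate $1+3\alpha_{\mathcal{Z}}\sum_{m=0}^{t-1}(4\alpha_{\mathcal{Z}}+1)^m\le(4\alpha_{\mathcal{Z}}+1)^t$ already exploited in (\ref{g_bound}). Summing the resulting geometric series in $\beta(4\alpha_{\mathcal{Z}}+1)$ gives $\|g\|_\infty\le \frac{2A}{1-\beta(4\alpha_{\mathcal{Z}}+1)}$, which is the first of the three displayed terms.

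The Lipschitz term is the crux. I want to bound $|g(z)-g(z')|$ by controlling the variation of each map $z\mapsto E^\gamma_z[L(Z_s)]$. The difficulty, which I expect to be the main obstacle, is that for a fixed policy $\gamma$ this map is \emph{not} Lipschitz: the first-step actions $\gamma_0(z)$ and $\gamma_0(z')$ may differ, whereas Assumption \ref{belief_reg} only gives $\alpha_{\mathcal{Z}}$-Lipschitz continuity of $\eta(\cdot\,|\,z,u)$ at a fixed $u$. This is resolved by Lemma \ref{action_policy_change}, which replaces $\sup_\gamma E^\gamma_z[L(Z_s)]$ by the worst-case-action functional $\psi_s(z):=\hat E_z[L(Z_s)]$ obeying $\psi_s(z)=\sup_u\int\psi_{s-1}(z_1)\,\eta(dz_1|z,u)$, with $\psi_0=L$. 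Since $|\sup_u a(u)-\sup_u b(u)|\le\sup_u|a(u)-b(u)|$, the supremum over actions preserves Lipschitz continuity, and Assumption \ref{belief_reg} then yields the recursion $\Lip_{\rho_{BL}}(\psi_s)\le\alpha_{\mathcal{Z}}\|\psi_{s-1}\|_{BL}$ with $\|\psi_s\|_\infty\le2$; combined with $\|\psi_0\|_{BL}=\|L\|_{BL}\le3$ this propagates the Lipschitz constants geometrically in $\alpha_{\mathcal{Z}}$, so that each $\psi_s$ has $\rho_{BL}$-Lipschitz constant of order $\|L\|_{BL}$ times a power of $\alpha_{\mathcal{Z}}$ dictated by the number of kernel applications.

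Finally I would split the double sum defining $g$ into its diagonal part $\sum_t\beta^tE_z[L(Z_t)]$ and its cross part $3\alpha_{\mathcal{Z}}\sum_t\beta^t\sum_{m}(4\alpha_{\mathcal{Z}}+1)^mE_z[L(Z_{t-m-1})]$, insert the per-term Lipschitz bounds from the previous step, and sum. The diagonal part is a single geometric series with effective discount $\beta\alpha_{\mathcal{Z}}$, producing the term $\frac{3\alpha_{\mathcal{Z}}}{1-\beta\alpha_{\mathcal{Z}}}$; the cross part is a convolution of the geometric series in $\beta(4\alpha_{\mathcal{Z}}+1)$ with the geometric Lipschitz factors, producing the squared denominator $\frac{9\alpha_{\mathcal{Z}}^2}{(1-\beta(4\alpha_{\mathcal{Z}}+1))^2}$. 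Adding the three contributions and pulling out $A=\alpha_c+\beta\alpha_{\mathcal{Z}}\|J_\beta\|_{BL}$ gives the claim. Once Lemma \ref{action_policy_change} licenses the reduction to the worst-case-action recursion, the only remaining work is the routine bookkeeping of these two geometric/convolution sums.
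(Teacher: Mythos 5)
Your proposal follows essentially the same route as the paper's proof: both invoke Lemma \ref{action_policy_change} to replace the policy supremum by the worst-case-action functional $\hat E_z[L(Z_t)]$, bound $\|g\|_\infty$ via $\|L\|_\infty\le 2$ and the geometric series in $\beta(4\alpha_{\mathcal{Z}}+1)$, propagate the Lipschitz constants inductively through the recursion $\Lip(\hat E_z[L(Z_t)])\le\alpha_{\mathcal{Z}}\|\hat E_z[L(Z_{t-1})]\|_{BL}$ with $\|L\|_{BL}\le 3$, and then sum the diagonal and cross parts to obtain the three displayed terms. The argument is correct and matches the paper's bookkeeping.
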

\begin{proof}
Using Lemma \ref{action_policy_change}, we can write that
\begin{align*}
g(z)&=\lim_{k\to\infty}\sup_{\gamma}\Bigg( \alpha_c \sum_{t=0}^{k-1}\beta^{t}\left(E_{z,\gamma}\left[L(Z_t)\right]+ 3 \alpha_{\mathcal{Z}} \sum_{m=0}^{t-1}(4\alpha_{\mathcal{Z}}+1)^mE_{z,\gamma}[L(Z_{t-m-1})]\right)\nonumber \\
&+  \alpha_{\mathcal{Z}} \sum_{t=0}^{k-1}\beta^{t+1}\|v_{k-t-1}\|_{BL}\left(E_{z,\gamma}\left[L(Z_t)\right]+ 3 \alpha_{\mathcal{Z}} \sum_{m=0}^{t-1}(4\alpha_{\mathcal{Z}}+1)^mE_{z,\gamma}[L(Z_{t-m-1})]\right)\Bigg)\\
&=\lim_{k\to\infty}\Bigg( \alpha_c \sum_{t=0}^{k-1}\beta^{t}\left(\hat{E}_{z}\left[L(Z_t)\right]+ 3 \alpha_{\mathcal{Z}} \sum_{m=0}^{t-1}(4\alpha_{\mathcal{Z}}+1)^m\hat{E}_{z}[L(Z_{t-m-1})]\right)\nonumber \\
&+  \alpha_{\mathcal{Z}} \sum_{t=0}^{k-1}\beta^{t+1}\|v_{k-t-1}\|_{BL}\left(\hat{E}_{z}\left[L(Z_t)\right]+ 3 \alpha_{\mathcal{Z}} \sum_{m=0}^{t-1}(4\alpha_{\mathcal{Z}}+1)^m\hat{E}_{z}[L(Z_{t-m-1})]\right)\Bigg).
\end{align*}

Using the fact that $\|L\|_\infty\leq2$ we can write
\begin{align*}
\|g\|_\infty &\leq  \lim_{k\to\infty}\Bigg(\alpha_c \sum_{t=0}^{k-1}\beta^{t}\left( 2+ 3\alpha_{\mathcal{Z}} \sum_{m=0}^{t-1}2(4\alpha_{\mathcal{Z}}+1)^m\right) \\
& \qquad +  \alpha_{\mathcal{Z}} \sum_{t=0}^{k-1}\beta^{t+1}\|v_{k-t-1}\|_{BL}\left(2+ 3\alpha_{\mathcal{Z}} \sum_{m=0}^{t-1}2(4\alpha_{\mathcal{Z}}+1)^m\right)\Bigg)\\
&\leq
2 \left(\alpha_c + \beta\alpha_{\mathcal{Z}}\|J_\beta^*\|_{BL}\right)\frac{1}{1-\beta(4\alpha_{\mathcal{Z}}+1)}.
\end{align*}

Next, we show that when defined as a function of $z$, $\|\hat{E}_z[L(Z_t)]\|_{BL}\leq 3\frac{\alpha_{\mathcal{Z}}^{t+1}-1}{\alpha_{\mathcal{Z}}-1}$ 
We follow an inductive approach. For $t=1$:
\begin{align*}
\big|\hat{E}_z\left[L(Z_1)\right]-\hat{E}_{\hat{z}}\left[L(Z_1)\right]\big|&\leq\sup_u\left|\int L(z_1)\eta(dz_1|z,u)-\int L(z_1)\eta(dz_1|\hat{z},u)\right|\\
&\leq\|L\|_{BL}\alpha_{\mathcal{Z}}\rho_{BL}(z,\hat{z})\leq 3\alpha_{\mathcal{Z}}\rho_{BL}(z,\hat{z})
\end{align*}
where $\|L\|_{BL}\leq 3$ follows from (\ref{L_bl_bound}). Hence we have that $\|\hat{E}_z[L(Z_1)]\|_{BL}\leq 3+3\alpha_{\mathcal{Z}}$.  Now we assume the claim holds for $t-1$ and focus on the case for $t$:
\begin{align*}
&\big|\hat{E}_z\left[L(Z_t)\right]-\hat{E}_{\hat{z}}\left[L(Z_t)\right]\big|\\
&=\sup_{u_0}\int\dots\sup_{u_{t-2}}\int \sup_{u_{t-1}}\int L(z_t)\eta(dz_t|z_{t-1},u_{t-1})\eta(dz_{t-1}|z_{t-2},u_{t-2})\dots \eta(dz_1|z,u_0)\\
&\quad - \sup_{u_0}\int\dots\sup_{u_{t-2}}\int \sup_{u_{t-1}}\int L(z_t)\eta(dz_t|z_{t-1},u_{t-1})\eta(dz_{t-1}|z_{t-2},u_{t-2})\dots \eta(dz_1|\hat{z},u_0)\\
&\leq \sup_{u_0}\left(\int \hat{E}_{z_1}\left[L(Z_t)\right]\eta(dz_1|z,u_0)- \int \hat{E}_{z_1}\left[L(Z_t)\right]\eta(dz_1|\hat{z},u_0)\right)\\
&\leq \alpha_{\mathcal{Z}} \rho_{BL}(z,\hat{z})\|\hat{E}_z[L(Z_{t-1})]\|_{BL}\leq  \alpha_{\mathcal{Z}} \rho_{BL}(z,\hat{z})3\frac{\alpha_{\mathcal{Z}}^{t}-1}{\alpha_{\mathcal{Z}}-1}.
\end{align*}
We then have that 
\begin{align*}
\|\hat{E}_z[L(Z_{t})]\|_{BL}\leq \|L\|_{\infty}+ 3\alpha_{\mathcal{Z}}\frac{\alpha_{\mathcal{Z}}^{t}-1}{\alpha_{\mathcal{Z}}-1}\leq 3+3\alpha_{\mathcal{Z}}\frac{\alpha_{\mathcal{Z}}^{t}-1}{\alpha_{\mathcal{Z}}-1}\leq  3\frac{\alpha_{\mathcal{Z}}^{t+1}-1}{\alpha_{\mathcal{Z}}-1}.
\end{align*}
Thus, we can write that
\begin{align*}
\big|\hat{E}_z\left[L(Z_t)\right]-\hat{E}_{\hat{z}}\left[L(Z_t)\right]\big|\leq3\alpha_{\mathcal{Z}}\frac{\alpha_{\mathcal{Z}}^{t}-1}{\alpha_{\mathcal{Z}}-1}\rho_{BL}(z,\hat{z}).
\end{align*}

Hence for any $z,\hat{z}$
\begin{align*}
&|g(z)-g(\hat{z})|  \leq \lim_{k\to\infty}  \bigg(\alpha_c \sum_{t=0}^{k-1}\beta^{t}\big( \big|\hat{E}_z\left[L(Z_t)\right]-\hat{E}_{\hat{z}}\left[L(Z_t)\right]\big|  \\
&\hspace{25mm}+ 3\alpha_{\mathcal{Z}} \sum_{m=0}^{t-1}(4\alpha_{\mathcal{Z}}+1)^m \left|\hat{E}_z\left[L(Z_{t-m-1})\right]-\hat{E}_{\hat{z}}\left[L(Z_{t-m-1})\right]\right|\bigg) \\
& \hspace{25mm}+  \alpha_{\mathcal{Z}} \sum_{t=0}^{k-1}\beta^{t+1}\|v_{k-t-1}\|_{BL}\bigg(  \left|\hat{E}_z\left[L(Z_t)\right]-\hat{E}_{\hat{z}}\left[L(Z_t)\right]\right| \\
&\hspace{25mm} + 3\alpha_{\mathcal{Z}} \sum_{m=0}^{t-1}(4\alpha_{\mathcal{Z}}+1)^m \left|\hat{E}_z\left[L(Z_{t-m-1})\right]-\hat{E}_{\hat{z}}\left[L(Z_{t-m-1})\right]\right|\bigg) \bigg)\\
&\leq \lim_{k\to\infty} \Bigg( \alpha_c \sum_{t=0}^{k-1}\beta^{t}\left(  3\alpha_{\mathcal{Z}}\frac{\alpha_{\mathcal{Z}}^{t}-1}{\alpha_{\mathcal{Z}}-1}\rho_{BL}(z,\hat{z})+ 3\alpha_{\mathcal{Z}} \sum_{m=0}^{t-1}(4\alpha_{\mathcal{Z}}+1)^m  3\alpha_{\mathcal{Z}}\frac{\alpha_{\mathcal{Z}}^{t-m-1}-1}{\alpha_{\mathcal{Z}}-1}\rho_{BL}(z,\hat{z})\right) \\
&+  \alpha_{\mathcal{Z}} \sum_{t=0}^{k-1}\beta^{t+1}\|v_{k-t-1}\|_{BL}\left(3\alpha_{\mathcal{Z}}\frac{\alpha_{\mathcal{Z}}^{t}-1}{\alpha_{\mathcal{Z}}-1}\rho_{BL}(z,\hat{z})+ 3\alpha_{\mathcal{Z}} \sum_{m=0}^{t-1}(4\alpha_{\mathcal{Z}}+1)^m 3\alpha_{\mathcal{Z}}\frac{\alpha_{\mathcal{Z}}^{t-m-1}-1}{\alpha_{\mathcal{Z}}-1}\rho_{BL}(z,\hat{z})\right)\Bigg)\\
&\leq \frac{3\alpha_{\mathcal{Z}}}{\alpha_{\mathcal{Z}}-1}(\alpha_c+\beta\alpha_{\mathcal{Z}}\|J_\beta\|_{BL})\left(\frac{1}{1-\beta\alpha_{\mathcal{Z}}}+\frac{3\alpha_{\mathcal{Z}}}{(1-\beta(4\alpha_{\mathcal{Z}}+1))^2}\right)\rho_{BL}(z,\hat{z}).
\end{align*}
Hence, we have that 
\begin{align*}
&\|g\|_{BL}\leq  (\alpha_c+\beta\alpha_{\mathcal{Z}}\|J_\beta\|_{BL})\left(\frac{2}{1-\beta(4\alpha_{\mathcal{Z}}+1)}+\frac{3\alpha_{\mathcal{Z}}}{1-\beta\alpha_{\mathcal{Z}}}+\frac{9\alpha_{\mathcal{Z}}^2}{(1-\beta(4\alpha_{\mathcal{Z}}+1))^2}\right).
\end{align*}
\end{proof}

\begin{lemma}\label{lip_val}
\begin{itemize}
\item[i.]
Under Assumption \ref{belief_reg}, if $\mathcal{Z}$ is metrized with $\rho_{BL}$, we have that
\begin{align*}
\|J^*_\beta\|_{BL}\leq \frac{1}{1-\beta\alpha_{\mathcal{Z}}}\left(\frac{\|\tilde{c}\|_\infty}{1-\beta}+\alpha_{\tilde{c}}\right).
\end{align*}
\item [ii.] Without any assumption, if $\mathcal{Z}$ is metrized with total variation distance, we have that
\begin{align*}
\|J^*_\beta\|_{BL}\leq \frac{2-\beta}{(1-\beta)(1-\beta\alpha_{\mathcal{Z}})}\|c\|_\infty
\end{align*}
\end{itemize}
\end{lemma}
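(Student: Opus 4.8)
The plan is to read the bound off the discounted-cost optimality equation
\[
J^*_\beta(z)=\min_u\left(\tilde{c}(z,u)+\beta\int J^*_\beta(z_1)\,\eta(dz_1|z,u)\right),
\]
after first using value iteration to guarantee that $\|J^*_\beta\|_{BL}$ is finite. Both parts amount to the same linear fixed-point manipulation; only the metric on $\mathcal{Z}$ and the source of the two relevant Lipschitz constants differ.

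For part (i), this is essentially the ``in particular'' clause of Lemma \ref{val_bound} pushed to the limit. Since the Bellman operator is a $\beta$-contraction in sup norm, $v_k\to J^*_\beta$ uniformly, and Lemma \ref{val_bound} gives $\|v_k\|_{BL}\le\frac{1}{1-\beta\alpha_{\mathcal{Z}}}\big(\frac{\|\tilde{c}\|_\infty}{1-\beta}+\alpha_{\tilde{c}}\big)$ uniformly in $k$. The bounded-Lipschitz norm is lower semicontinuous under pointwise convergence: for any $z_0$ and any $z_1\neq z_2$ the quantity $|v_k(z_0)|+\frac{|v_k(z_1)-v_k(z_2)|}{\rho_{BL}(z_1,z_2)}$ converges and stays below the uniform constant, and taking suprema over $z_0$ and over $(z_1,z_2)$ gives $\|J^*_\beta\|_{BL}\le\limsup_k\|v_k\|_{BL}$, so the bound transfers to $J^*_\beta$. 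Equivalently, once finiteness is in hand I would estimate directly from the optimality equation: $|J^*_\beta(z)-J^*_\beta(z')|\le\alpha_{\tilde{c}}\rho_{BL}(z,z')+\beta\|J^*_\beta\|_{BL}\,\rho_{BL}(\eta(\cdot|z,u),\eta(\cdot|z',u))$, and Assumption \ref{belief_reg} together with $\|J^*_\beta\|_\infty\le\frac{\|\tilde{c}\|_\infty}{1-\beta}$ yields $\|J^*_\beta\|_{BL}\le\frac{\|\tilde{c}\|_\infty}{1-\beta}+\alpha_{\tilde{c}}+\beta\alpha_{\mathcal{Z}}\|J^*_\beta\|_{BL}$, which is solved using $\beta\alpha_{\mathcal{Z}}<1$.

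For part (ii) I would run the identical argument with $\mathcal{Z}$ metrized by total variation, the point being that the cost Lipschitz assumption is no longer needed. The cost $\tilde{c}(z,u)=\int c(x,u)z(dx)$ is automatically $\|c\|_\infty$-Lipschitz in total variation, since $|\tilde{c}(z,u)-\tilde{c}(z',u)|=|\int c(x,u)(z-z')(dx)|\le\|c\|_\infty\|z-z'\|_{TV}$, and this plays the role of $\alpha_{\tilde{c}}$; moreover $\|\tilde{c}\|_\infty\le\|c\|_\infty$. Feeding the total-variation kernel regularity $\rho_{BL}(\eta(\cdot|z,u),\eta(\cdot|z',u))\le\alpha_{\mathcal{Z}}\|z-z'\|_{TV}$ of Theorem \ref{belief_kernel_regularity}(iii)/(iv) through the duality $|\int J^*_\beta\,d\eta(\cdot|z,u)-\int J^*_\beta\,d\eta(\cdot|z',u)|\le\|J^*_\beta\|_{BL}\,\rho_{BL}(\eta(\cdot|z,u),\eta(\cdot|z',u))$, the optimality equation gives $\mathrm{Lip}_{TV}(J^*_\beta)\le\|c\|_\infty+\beta\alpha_{\mathcal{Z}}\|J^*_\beta\|_{BL}$. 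Adding $\|J^*_\beta\|_\infty\le\frac{\|c\|_\infty}{1-\beta}$ and solving gives $(1-\beta\alpha_{\mathcal{Z}})\|J^*_\beta\|_{BL}\le\|c\|_\infty\big(\frac{1}{1-\beta}+1\big)=\frac{(2-\beta)\|c\|_\infty}{1-\beta}$, which is the stated bound. Finiteness of $\|J^*_\beta\|_{BL}$ here follows from the total-variation analogue of Lemma \ref{val_bound}, obtained by the same induction with $\alpha_{\tilde{c}}$ replaced by $\|c\|_\infty$.

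The algebra is routine; the two points that require care are (a) establishing that $\|J^*_\beta\|_{BL}$ is finite before dividing through by $(1-\beta\alpha_{\mathcal{Z}})$ in the self-referential inequality, which is why I would anchor the argument in the value iteration rather than manipulating $J^*_\beta$ directly, and (b) correctly passing the kernel difference through the bounded-Lipschitz duality so that the constant $\alpha_{\mathcal{Z}}$ from Theorem \ref{belief_kernel_regularity} appears, being careful that the test-function norm used in the duality is consistent with the metric ($\rho_{BL}$ in part (i), total variation in part (ii)) placed on $\mathcal{Z}$.
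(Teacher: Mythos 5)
Your proposal is correct and follows essentially the same route as the paper: both parts are obtained by reading a self-referential Lipschitz inequality off the discounted-cost optimality equation, using Assumption \ref{belief_reg} (with $\alpha_{\tilde{c}}$ replaced by $\|c\|_\infty$ in the total-variation case, since $\tilde{c}$ is automatically $\|c\|_\infty$-Lipschitz in that metric), bounding $\|J^*_\beta\|_\infty$ by $\|\tilde{c}\|_\infty/(1-\beta)$, and rearranging under $\beta\alpha_{\mathcal{Z}}<1$. Your additional step of first securing finiteness of $\|J^*_\beta\|_{BL}$ through the uniform-in-$k$ bound of Lemma \ref{val_bound} before dividing by $(1-\beta\alpha_{\mathcal{Z}})$ addresses a point the paper's own proof passes over silently, and is a worthwhile refinement rather than a different approach.
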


\begin{proof}
We start with the first part, for any $x,y \in Z$
\begin{align*}
|J_\beta^*(x)-J_\beta^*(y)|&\leq \sup_u\bigg(|\tilde{c}(x,u)-\tilde{c}(y,u)|+\beta |\int J_\beta^*(z)\eta(dz|x,u)-\int J_\beta^*(z)\eta(dz|y,u)|\bigg)\\
&\leq \alpha_{\tilde{c}}\rho_{BL}(x,y)+\beta\|J_\beta^*\|_{BL}\alpha_{\mathcal{Z}}\rho_{BL}(x,y).
\end{align*}
Thus, we can write that
\begin{align*}
\|J_\beta^*\|_{BL}\leq \|J_\beta^*\|_\infty+\alpha_{\tilde{c}}+\beta\alpha_{\mathcal{Z}}\|J_\beta^*\|_{BL}\leq \frac{\|\tilde{c}\|_\infty}{1-\beta}+\alpha_{\tilde{c}}+\beta\alpha_{\mathcal{Z}}\|J_\beta^*\|_{BL}.
\end{align*}
Hence, rearranging the terms, we can write (provided $\beta\alpha_{\mathcal{Z}}< 1$)
\begin{align*}
\|J_\beta^*\|_{BL}\leq\frac{1}{1-\beta\alpha_{\mathcal{Z}}}\left(\frac{\|\tilde{c}\|_\infty}{1-\beta}+\alpha_{\tilde{c}}\right).
\end{align*}
For the second part, similar arguments lead to the following bound
\begin{align*}
\|J_\beta^*\|_{BL}\leq \|J^*_\beta\|_\infty+ \|c\|_\infty +\beta\alpha_{\mathcal{Z}}\|J_\beta^*\|_{BL},
\end{align*}
hence the result follows after rearranging the terms and noting that $\|J_\beta^*\|_\infty\leq \frac{\|c\|_\infty}{1-\beta}$.
\end{proof}

\bibliographystyle{plain}
\bibliography{AliBibliography,SerdarBibliography_acc,references,references_acc,SerdarBibliography}

\end{document}